\DeclareSymbolFontAlphabet{\mathbb}{AMSb}
\DeclareSymbolFontAlphabet{\mathbbl}{bbold}
\definecolor{darkblue}{rgb}{0,0,0.4} 
\numberwithin{equation}{section}
\newtheorem{thm}{Theorem}
\newtheorem{theorem}[thm]{Theorem}
\newtheorem{lem}{Lemma}[section]               
\newtheorem{lemma}[lem]{Lemma}
\newtheorem{corollary}[lem]{Corollary}               
\newtheorem{proposition}[lem]{Proposition}
\theoremstyle{definition}
\newtheorem{definition}[lem]{Definition}
\theoremstyle{remark}
\newtheorem{remark}[lem]{Remark}
\newtheorem{convention}[lem]{Convention}
\newcommand{\R}{\mathbb{R}}
\newcommand{\C}{\mathbb{C}}
\newcommand{\N}{\mathbb{N}}
\newcommand{\mc}{\mathcal}
\newcommand{\wt}{\widetilde}
\newcommand{\ol}{\overline}
\renewcommand{\emptyset}{\varnothing}
\newcommand{\Wmirror}[1]{\hat{#1}}
\newcommand{\smas}{\wedge}
\newcommand{\from}{\colon}
\newcommand{\into}{\hookrightarrow}
\renewcommand{\th}{^{\text{th}}}
\newcommand{\SLalg}{\mathfrak{sl}}
\DeclareMathOperator{\Inv}{Inv}
\renewcommand{\hat}{\widehat}
\DeclareMathOperator{\Diff}{Diff}
\DeclareMathOperator{\Ob}{Ob}
\DeclareMathOperator{\Id}{Id}
\DeclareMathOperator{\Sq}{Sq}
\DeclareMathOperator{\Hom}{Hom}
\newcommand{\Kh}{\mathit{Kh}}
\newcommand{\KhCx}{\mc{C}_{\mathit{Kh}}}
\newcommand{\AKh}{\mathit{AKh}}
\newcommand{\AKhCx}{\mc{C}_\mathit{AKh}}
\newcommand{\Cat}{\mathscr{C}}
\newcommand{\Dat}{\mathscr{D}}
\newcommand{\Realize}[2][{}]{|#2|_{#1}}
\newcommand{\CRealize}[2][{}]{\|#2\|_{#1}}
\newcommand{\Forget}{\mathcal{F}_{\!\!\scriptscriptstyle\mathit{orget}}}
\newcommand{\op}{\mathrm{op}}
\newcommand{\gr}{\mathrm{gr}}
\newcommand{\CubeCat}[1]{\underline{2}^{#1}}
\newcommand{\Filt}{\mathcal{F}}
\newcommand{\co}{\colon}
\newcommand{\bdy}{\partial}
\newcommand{\RR}{\R}
\newcommand{\CC}{\C}
\newcommand{\DD}{\mathbb{D}}
\newcommand{\pt}{\mathrm{pt}}
\newcommand{\NN}{\N}
\newcommand{\ZZ}{\mathbb{Z}}
\newcommand{\BurnsideCat}{\mathscr{B}}
\newcommand{\CCat}[1]{\CubeCat{#1}}
\newcommand{\GrAbelianGroups}{\ZZ^{\mathsf{Ab}}}
\newcommand{\Spectra}{\mathscr{S}}
\newcommand{\GrSpectra}{\ZZ^{\Spectra}}
\newcommand{\mSpectra}{\Spectra}
\newcommand{\GrmSpectra}{\GrSpectra}
\newcommand{\Complexes}{\mathsf{Kom}}
\newcommand{\GrComplexes}{\ZZ^{\mathsf{Kom}}}
\newcommand{\Total}[1]{\mathsf{Tot}(#1)}
\newcommand{\Sets}{\mathsf{Sets}}
\newcommand{\SphereS}{\mathbb{S}} 
\newcommand{\CobD}{\mathsf{Cob}_d}
\newcommand{\DeltaInj}{\Delta_{\mathit{inj}}}
\newcommand{\HKKa}{\mathit{HKK}}
\DeclareMathOperator{\hocolim}{hocolim}
\newcommand{\KTfunc}[1]{\mc{C}_{#1}}
\newcommand{\KTalg}[1]{\mathcal{A}_{#1}}
\newcommand{\KTalgOld}[1]{\mathcal{H}^{#1}}
\newcommand{\CKTalg}[3]{\mathcal{A}_{#1}^{#2,#3}}
\newcommand{\CKTalgT}[3]{\overline{\mathcal{A}}_{#1}^{#2,#3}}
\newcommand{\CKTalgB}[2]{\mathcal{A}_{#1}^{#2}}
\newcommand{\CKTalgTB}[2]{\overline{\mathcal{A}}_{#1}^{#2}}
\newcommand{\CKTalgBig}[1]{\mathcal{A}_{#1}^{\mathscr{P}}}
\newcommand{\CKTalgOld}[2]{\mathcal{A}^{#1,#2}}
\newcommand{\CKTalgBigOld}[1]{\mathcal{A}^{#1}}
\newcommand{\CKTideal}[3]{\mathcal{I}_{#1}^{#2,#3}}
\newcommand{\CKTidealB}[2]{\mathcal{I}_{#1}^{#2}}
\newcommand{\CKTfunc}[5]{\mc{C}_{#1}^{#2,#3;#4,#5}}
\newcommand{\CKTfuncB}[3]{\mc{C}_{#1}^{#2;#3}}
\newcommand{\CKTfuncBig}[1]{\mc{C}_{#1}^{\mathscr{P}}} 
\newcommand{\CKTfuncIdeal}[5]{\mathcal{J}_{#1}^{#2,#3;#4,#5}}
\newcommand{\CKTfuncIdealB}[3]{\mathcal{J}_{#1}^{#2;#3}}
\newcommand{\Crossingless}[1]{{\mathsf{B}}_{#1}}
\newcommand{\rCrossingless}[3]{{\mathsf{B}}_{#1}^{#2,#3}} 
\newcommand{\rCrossinglessB}[2]{{\mathsf{B}}_{#1}^{#2}} 
\newcommand{\bh}{\mathbf{h}}
\newcommand{\bk}{\mathbf{k}}
\newcommand{\bl}{\mathbf{l}}
\newcommand{\bbk}{\mathbbl{k}}
\newcommand{\mHshape}[1]{\wt{\mathcal{S}}_{#1}}
\newcommand{\mHshapeS}[1]{\mathcal{S}_{#1}}
\newcommand{\mCKAshape}[3]{\wt{\mathcal{S}}_{#1}^{#2,#3}}
\newcommand{\mCKAshapeS}[3]{\mathcal{S}_{#1}^{#2,#3}} 
\newcommand{\mCKAshapeB}[2]{\wt{\mathcal{S}}_{#1}^{#2}}
\newcommand{\mCKAshapeSB}[2]{\mathcal{S}_{#1}^{#2}} 
\newcommand{\mTshape}[2]{\wt{\mathcal{T}}_{#1;#2}}
\newcommand{\SmTshape}[2]{\mathcal{T}_{#1;#2}} 
\newcommand{\mCKTshape}[6]{\wt{\mathcal{T}}_{#1;#2}^{#3,#4;#5,#6}}
\newcommand{\SmCKTshape}[6]{\mathcal{T}_{#1;#2}^{#3,#4;#5,#6}} 
\newcommand{\mCKTshapeB}[4]{\wt{\mathcal{T}}_{#1;#2}^{#3;#4}} 
\newcommand{\SmCKTshapeB}[4]{\mathcal{T}_{#1;#2}^{#3;#4}} 
\newcommand{\mHinv}[1]{\Phi_{#1}}
\newcommand{\mHinvRes}[3]{\overline{\Phi}_{#1}^{#2,#3}}
\newcommand{\mHinvResB}[2]{\overline{\Phi}_{#1}^{#2}}
\newcommand{\mAinv}[3]{\Phi_{#1}^{#2,#3}}
\newcommand{\mAinvB}[2]{\Phi_{#1}^{#2}}
\newcommand{\mTinv}[1]{\Psi_{#1}}
\newcommand{\mTinvResB}[3]{\overline{\Psi}_{#1}^{#2;#3}}
\newcommand{\mCKTinv}[5]{\Psi_{#1}^{#2,#3;#4,#5}} 
\newcommand{\mCKTinvB}[3]{\Psi_{#1}^{#2;#3}} 
\newcommand{\mCKTinvBtemp}[3]{\tilde{\Psi}_{#1}^{#2;#3}}
\newcommand{\KTSpecCat}[1]{\mathscr{A}_{#1}}
\newcommand{\KTSpecRing}[1]{\mathscr{A}_{#1}^{\scriptscriptstyle\mathit{ring}}}
\newcommand{\KTSpecBim}[1]{\mathscr{X}_{#1}}
\newcommand{\KTSpecBimRing}[1]{\mathscr{X}_{#1}^{\scriptscriptstyle\mathit{\!mod}}}
\newcommand{\CKTSpecCat}[3]{\mathscr{A}_{#1}^{#2,#3}}
\newcommand{\CKTSpecCatB}[2]{\mathscr{A}_{#1}^{#2}}
\newcommand{\CKTSpecBim}[5]{\mathscr{X}_{#1}^{#2,#3;#4,#5}}
\newcommand{\CKTSpecBimB}[3]{\mathscr{X}_{#1}^{#2;#3}} 
\newcommand{\oCKTSpecBimB}[3]{\overline{\mathscr{X}}_{#1}^{#2;#3}} 
\newcommand{\CKTSpecBimBig}[1]{\mathscr{X}_{#1}^{\mathscr{P}}}
\newcommand{\CKTSpecCatBig}[1]{\mathscr{A}_{#1}^{\mathscr{P}}}
\newcommand{\KhSpace}[1]{\mathscr{X}_{#1}}
\newcommand{\AKhSpace}[1]{\mathscr{A\!X}_{\!#1}}
\newcommand{\mGlue}[3]{\wt{\mathcal{U}}_{#1;#2;#3}}
\newcommand{\mGlueS}[3]{\mathcal{U}_{#1;#2;#3}}
\newcommand{\mCKGlue}[3]{\wt{\mathcal{U}}_{#1;#2;#3}^{\mathbf{k}}}
\newcommand{\mCKGlueS}[3]{\mathcal{U}_{#1;#2;#3}^{\mathbf{k}}}
\newcommand{\mBurnside}{\BurnsideCat}
\newcommand{\ttimes}{\tilde{\times}}
\newcommand{\ulmHinv}[1]{\underline{\mathsf{MB}}_{#1}}
\newcommand{\ulmTinvNF}[1]{{\underline{\mathsf{MB}}}_{#1}}
\newcommand{\DTP}{\otimes^{\mathbb{L}}}
\newcommand{\Tan}{T}
\newcommand{\anclose}[1]{[#1]}
\newcommand{\flatclose}[1]{\langle #1\rangle}
\newcommand{\mI}{\mathsf{I}}
\newcommand{\mJ}{\mathsf{J}}
\newcommand{\mK}{\mathsf{K}}
\newcommand{\ou}[1]{\overline{\underline{#1}}}
\DeclareMathOperator{\THH}{THH}
\DeclareMathOperator{\HH}{HH}
\DeclareMathOperator{\HC}{HC}
\newcommand{\annulus}{\mkern4mu\circ\mkern-13.5mu\bigcirc}
\begin{document}

 
\title{Chen-Khovanov spectra for tangles}

\author{Tyler Lawson}
\thanks{\texttt{TL was supported by NSF FRG Grant DMS-1560699.}}
\email{\href{mailto:tlawson@math.umn.edu}{tlawson@math.umn.edu}}
\address{Department of Mathematics, University of Minnesota, Minneapolis, MN 55455}

\author{Robert Lipshitz}
\thanks{\texttt{RL was supported by NSF FRG Grant DMS-1560783.}}
\email{\href{mailto:lipshitz@uoregon.edu}{lipshitz@uoregon.edu}}
\address{Department of Mathematics, University of Oregon, Eugene, OR 97403}

\author{Sucharit Sarkar}
\thanks{\texttt{SS was supported by NSF FRG Grant DMS-1563615.}}
\email{\href{mailto:sucharit@math.ucla.edu}{sucharit@math.ucla.edu}}
\address{Department of Mathematics, University of California, Los Angeles, CA 90095}


\keywords{}

\date{}

\begin{abstract}
  We note that our stable homotopy refinements of Khovanov's arc algebras and tangle invariants induce refinements of Chen-Khovanov and Stroppel's platform algebras and tangle invariants, and discuss the topological Hochschild homology of these refinements.
\end{abstract}
\maketitle
\vspace{-1cm}


\tableofcontents



\section{Introduction}\label{sec:intro}
In this paper, we continue our homotopical journey through Khovanov
homology by giving spectral refinements of Chen-Khovanov and
Stroppel's platform algebras.

Khovanov homology~\cite{Kho-kh-categorification} associates a bigraded abelian group $\Kh^{i,j}(L)$ to an oriented link $L\subset \RR^3$, so that the graded Euler characteristic of $\Kh^{i,j}(L)$ is the Jones polynomial. Khovanov~\cite{Kho-kh-tangles} extended his invariant to $(2m,2n)$-tangles by associating a graded algebra $\KTalgOld{n}$ to each non-negative integer $n$ and a chain complex of graded $(\KTalgOld{m},\KTalgOld{n})$-bimodules $\KTfunc{\Tan}$ to an oriented $(2m,2n)$-tangle diagram $\Tan$, such that:
\begin{enumerate}[label=(K-\arabic*),leftmargin=*]
\item\label{item:KT-prop-1} the chain homotopy type of $\KTfunc{\Tan}$ is an isotopy invariant of the tangle represented by $\Tan$,
\item\label{item:KT-prop-2} composition of tangles corresponds to the tensor product of bimodules, i.e., $\KTfunc{\Tan_1\Tan_2}\simeq \KTfunc{\Tan_1}\otimes_{\KTalgOld{n}}\KTfunc{\Tan_2}$, and
\item\label{item:KT-prop-3} $\KTalgOld{0}\cong\ZZ$ and for a $(0,0)$-tangle $L$, $H_*(\KTfunc{L})=\Kh(L)$.
\end{enumerate}
(See Table~\ref{tab:notation} for a dictionary of notation between this paper and the references. Note that our tangles are drawn left-to-right, not top-to-bottom as is more usual.)

Let $V$ be the fundamental (2-dimensional) representation of $U_q(\SLalg(2))$. 
Khovanov further showed that the Grothendieck group of the category of finitely-generated, graded $\KTalgOld{n}$-modules (and of the category of finitely-generated complexes of $\KTalgOld{n}$-modules) is canonically isomorphic to $\Inv(V^{\otimes 2n})$, the subspace of $U_q(\SLalg(2))$-invariants in $V^{\otimes 2n}$.

From a representation-theoretic standpoint, it is more interesting to
categorify $V^{\otimes n}$ itself. This was accomplished by
Chen-Khovanov and Stroppel~\cite{CK-kh-tangle,St-kh-tangle}, who
defined an algebra $\CKTalgBigOld{n}=\bigoplus_{k=0}^n\CKTalgOld{n-k}{k}$
for each $n\in\NN$ and associated to each $(m,n)$-tangle an
$(\CKTalgBigOld{m},\CKTalgBigOld{n})$-bimodule satisfying
Properties~\ref{item:KT-prop-1}--\ref{item:KT-prop-3} and so that the
Grothendieck group of graded, projective $\CKTalgBigOld{n}$-modules is
canonically isomorphic to $V^{\otimes n}$. These \emph{platform
  algebras} were further studied and generalized by Brundan-Stroppel~\cite{BS-kh-tangle}. A related, though more geometric, tangle invariant was also introduced by Bar-Natan~\cite{Bar-kh-tangle-cob} and, recently, another extension of Khovanov homology to tangles has been given by Roberts~\cite{Roberts-kh-tangle,Roberts-kh-A-tangle}.

In a previous paper~\cite{LLS-kh-tangles}, we gave stable homotopy refinements of Khovanov's algebras and modules. That is, for each integer $n$ we constructed a ring spectrum $\KTSpecRing{2n}$ and for each $(2m,2n)$-tangle diagram $\Tan$ an $(\KTSpecRing{2m},\KTSpecRing{2n})$-bimodule spectrum $\KTSpecBimRing{\Tan}$ such that:
\begin{enumerate}[label=(K-\arabic*),leftmargin=*]
\item\label{item:old-T-prop-1} the weak equivalence class of $\KTSpecBimRing{\Tan}$ is an isotopy invariant of the tangle represented by $\Tan$~\cite[Theorem 4]{LLS-kh-tangles},
\item\label{item:old-T-prop-2} composition of tangles corresponds to the tensor product of bimodule spectra, i.e., $\KTSpecBimRing{\Tan_1\Tan_2}\simeq \KTSpecBimRing{\Tan_1}\otimes_{\KTSpecRing{2n}}^L\KTSpecBimRing{\Tan_2}$~\cite[Theorem 5]{LLS-kh-tangles},
\item\label{item:old-T-prop-3} $\KTSpecRing{0}\simeq\SphereS$, the sphere spectrum, and for a $(0,0)$-tangle $L$, $\KTSpecBimRing{\Tan}$ is weakly equivalent to the previously-constructed~\cite{RS-khovanov,HKK-Kh-htpy,LLS-khovanov-product} Khovanov spectrum, and
\item\label{item:old-T-prop-4} the singular chain complex of $\KTSpecRing{2n}$ (respectively $\KTSpecBimRing{\Tan}$) is quasi-isomorphic to Khovanov's algebra $\KTalgOld{n}$ (respectively complex of bimodules $\KTfunc{\Tan}$)~\cite[Proposition 4.2]{LLS-kh-tangles}.
\end{enumerate}

In this paper, we modify $\KTSpecRing{2n}$ and $\KTSpecBimRing{\Tan}$ to give stable homotopy refinements of the platform algebras and modules.

This paper is organized as follows. Section~\ref{sec:CK-background}
reviews the arc algebras and bimodules and platform algebras and
bimodules. Section~\ref{sec:review-spectral-arc-algebra} reviews the
construction of the spectral refinements of the arc algebras and bimodules
(from~\cite{LLS-kh-tangles}).  Section~\ref{sec:spectral-CK}
constructs spectral refinements of the platform algebras and bimodules
and proves their basic properties. In Section~\ref{sec:THH} we show
that the topological Hochschild homology of the spectral platform
bimodules is homotopy equivalent to the naive spectral refinement of
annular Khovanov homology.

\subsection*{Acknowledgments} We thank Jesse Cohen, Aaron Lauda, Tony Licata, Andy
Manion, and Matthew Stoffregen for helpful conversations. We also
thank the referee for their comments.
\section{The platform algebras and modules}\label{sec:CK-background}
The platform algebras are subquotients of Khovanov's arc algebras. In
this section, we review both collections of algebras. For the platform algebras, we will
expand on some details in Chen-Khovanov's proofs, so that it is easier
to see how they adapt to the spectral case.

\subsection{Some notation}

In order to keep track of the quantum gradings throughout, it is convenient to work in graded versions of various well-known categories. We list them below.

\begin{itemize}[leftmargin=*]
\item Let $\ZZ^\Sets$ denote the category of finite, graded sets, whose
  objects are finite sets $X$ together with set maps $\gr\co X\to\ZZ$,
  and morphisms $f\co (X,\gr_X)\to (Y,\gr_Y)$ are set maps $f\co X\to
  Y$ so that $\gr_X=\gr_Y\circ f$.
\item Let $\Complexes$ be the category of freely generated chain
  complexes over $\ZZ$. (The Khovanov complex is usually presented as
  a cochain complex; we will view it as a chain complex by negating
  the homological grading; see \cite[\S
  2.1]{LLS-kh-tangles}.) Let $\GrComplexes$ denote the
  full subcategory of $\prod_\ZZ\Complexes$ where all but finitely
  many of the chain complexes are zero. So, $\GrComplexes$ is isomorphic to the
  category of bigraded chain complexes---the first grading being the
  homological grading and the second grading being an additional
  grading---that are bounded in the second grading.
\item Let $\GrAbelianGroups$ denote the category of freely and
  finitely generated graded abelian groups. We can (and will) identify
  $\GrAbelianGroups$ with the full subcategory of $\GrComplexes$
  with objects the finitely generated chain complexes supported in
  homological grading $0$.
\item Let $\Spectra$ be the category of symmetric spectra
  \cite{HSS-top-symmetric}. Let $\GrSpectra$ denote the full
  subcategory of $\prod_\ZZ\Spectra$ where all but finitely many of
  the spectra are trivial (that is, just the basepoint). Taking
  reduced singular chain complexes gives a functor $C_*\from
  \GrSpectra\to\GrComplexes$, cf.~\cite[\S2.7]{LLS-kh-tangles}.
\end{itemize}

Our notation for the arc and platform algebras and modules differs
slightly from Khovanov's and Chen-Khovanov's; see
Table~\ref{tab:notation} for a summary.

\begin{table}
  \centering
  \begin{tabular}[tab:notation]{llp{3.25in}}
    \toprule
    Our notation & \cite{Kho-kh-tangles,CK-kh-tangle} notation & Meaning\\
\midrule
    $V$ & $\mathcal{A}$ & Frobenius algebra $H^*(S^2)$.\\
    $\Crossingless{2n}$ & $B^n$ & Isotopy classes of crossingless matchings of $2n$ points.\\
    $\Wmirror{b}$ & $W(b)$ & Reflection of $b$.\\
    $\KTalg{2n}=\KTalgOld{n}$ & $H^n$ & Algebra associated to $n$ points.\\
    $\KTfunc{\Tan}$ & $V_D=\displaystyle\bigoplus_{v\in 2^N}\mathcal{F}(D(v))\{-v\}$ & Khovanov tangle invariant, a complex of $(\KTalg{2m},\KTalg{2n})$-bimodules.\\
    \midrule
    $\rCrossingless{n}{k_1}{k_2}=\rCrossinglessB{n}{\bk}$ & $B^{n-k,k}$ & Crossingless matchings of $n+k_1+k_2$ points with no matching among first $k_1$ or last $k_2$ points. Chen-Khovanov require $k_1+k_2=n$.\\
    $\CKTalgT{n}{k_1}{k_2}=\CKTalgTB{n}{\bk}$ & $\wt{A}^{n-k,k}$ & Subring of $\KTalg{2n}$ induced by $\rCrossingless{n}{k_1}{k_2}$.\\
    $\CKTideal{n}{k_1}{k_2}=\CKTidealB{n}{\bk}$ & $I^{n-k,k}$ & A particular ideal in $\CKTalgT{n}{k_1}{k_2}$.\\
    $\CKTalg{n}{k_1}{k_2}=\CKTalgB{n}{\bk}$ & $A^{n-k,k}$ & Platform algebra, $\CKTalgT{n}{k_1}{k_2}/\CKTideal{n}{k_1}{k_2}$.\\
    $\CKTalgBig{n}$ & $A^{n}$ & Total platform algebra, $\bigoplus_{k=0}^n\CKTalg{n}{n-k}{k}$.\\
    $\CKTfuncIdeal{\Tan}{h_1}{h_2}{k_1}{k_2}=\CKTfuncIdealB{\Tan}{\bh}{\bk}$ & $I(T)$ & A particular submodule of $\KTfunc{\ou{\Tan}}$.\\ 
    $\CKTfunc{\Tan}{h_1}{h_2}{k_1}{k_2}=\CKTfuncB{\Tan}{\bh}{\bk}$ & $\mathcal{F}(T)$ & Platform tangle invariant, $\KTfunc{\ou{\Tan}}/\CKTfuncIdeal{\Tan}{h_1}{h_2}{k_1}{k_2}$.\\
    $\CKTfuncBig{\Tan}$ & $\mathcal{F}(T)$ & Total platform bimodule, $\displaystyle\bigoplus_{\substack{h,k\\m-n=2(h-k)}}\!\!\!\CKTfunc{\Tan}{m-h}{h}{n-k}{k}$.\\
    \bottomrule
  \end{tabular}
  \caption{\textbf{Comparison of notation with Khovanov and Chen-Khovanov.}} 
  \label{tab:notation}
\end{table}

\subsection{Arc algebras and modules}
Let $V=H^*(S^2)=\ZZ[X]/(X^2)$ denote Khovanov's Frobenius algebra.
Explicitly, the comultiplication is given by $1\mapsto 1\otimes
X+X\otimes 1$ and $X\mapsto X\otimes X$, and the counit map is given
by $1\mapsto 0$ and $X\mapsto 1$. We view $V$ as a graded abelian
group with $1$ in grading $(-1)$ and $X$ in grading $1$; this grading
is called the quantum grading. (See \cite[Remark~2.55]{LLS-kh-tangles}
for a brief discussion of gradings.) Given a compact $1$-manifold $Z$,
let $V(Z)=\bigotimes_{\pi_0(Z)}V$, the tensor product over the
connected components of $Z$. Via the equivalence between Frobenius
algebras and 2-dimensional topological field theories, we can view $V$
as a topological field theory which assigns $V$ to the circle.

We prefer to view the arc and platform algebras as linear
categories, rather than algebras. Given a category $\Cat$, we will
write $\Cat(a,b)$ for $\Hom_{\Cat}(a,b)$. Then Khovanov's \emph{arc
  algebra} $\KTalgOld{n}$ is the category with objects the crossingless
matchings of $2n$ points, $\Crossingless{2n}$, and
\[
  \KTalgOld{n}(a,b)\coloneqq V(a\Wmirror{b})\{n\},
\]
where $\Wmirror{b}$ denotes the horizontal reflection of $b$ and
$\{n\}$ denotes an upward quantum grading shift by $n$.  Composition
is induced by the \emph{canonical cobordisms} $\Wmirror{b}\amalg b\to
\Id$, which gives cobordisms $a\Wmirror{b}\amalg b\Wmirror{c}\to
a\Wmirror{c}$, and the topological field theory associated to $V$.

To keep notation consistent later in this paper, let
\[
  \KTalg{2n}=\KTalgOld{n}.
\]

Given a flat $(2m,2n)$-tangle $\Tan$
there is an induced $(\KTalg{2m},\KTalg{2n})$-bimodule, i.e., functor of
linear categories
\[
  \KTfunc{\Tan}\co (\KTalg{2m})^{\op}\times\KTalg{2n}\to \GrAbelianGroups,
\]
which on objects is given by
$\KTfunc{\Tan}(a,b)=V(a\Tan\Wmirror{b})\{n\}$ and on morphisms is
induced by the canonical cobordisms. More generally, given a non-flat
oriented tangle diagram $\Tan$, there is a functor
\[
  \CCat{N}\times (\KTalg{2m})^{\op}\times\KTalg{2n}\to\GrAbelianGroups
\]
given on objects by
\[
  (v,a,b)\mapsto \KTfunc{\Tan_v}(a,b)\{-|v|-N_++2N_-\}=V(a\Tan_v\Wmirror{b})\{n-|v|-N_++2N_-\}.
\]
Here, $\CCat{N}=\{0\to 1\}^N$, and $N_+,N_-,N$ are the number of
positive, negative, and total crossings in $\Tan$, respectively. This induces a functor
\[
  \KTfunc{\Tan}\co (\KTalg{2m})^{\op}\times\KTalg{2n}\to \GrComplexes,
\]
by taking iterated mapping cones along $\CCat{N}$, and then shifting
the homological grading down by $N_+$.

\subsection{Platform algebras}\label{sec:platform-algebras}
Let $\rCrossingless{n}{k_1}{k_2}\subset\Crossingless{n+k_1+k_2}$ be
the set of crossingless matchings with no matchings among the first
$k_1$ points and no matchings among the last $k_2$ points. So,
$\Crossingless{2n}=\rCrossingless{2n}{0}{0}$. The cases used for the
platform algebras and modules are $\rCrossingless{n}{n-k}{k}$ (for
$0\leq k\leq n$), but the definitions generalize to
$\rCrossingless{n}{k_1}{k_2}$ for any $k_1+k_2\equiv n\pmod{2}$, and
it is convenient to work in the more general setting.

Given $a,b\in\Crossingless{n+k_1+k_2}$, for each $1\leq i\leq
n+k_1+k_2$ there is a corresponding circle $Z_i$ of $a\Wmirror{b}$
containing the point $i$. This induces an equivalence relation
$\sim_{a,b}$ on $1,\dots,n+k_1+k_2$ by $i\sim_{a,b}j$ if and only if
$Z_i=Z_j$. (Equivalently, this equivalence relation is generated by
$i\sim j$ if $i$ is matched to $j$ in either $a$ or $b$.) So,
$a\in\rCrossingless{n}{k_1}{k_2}$ if and only if $i\not\sim_{a,a}j$
whenever $i<j\leq k_1$ or $i>j>n+k_1$.

For $a,b\in\rCrossingless{n}{k_1}{k_2}$ define
$\CKTideal{n}{k_1}{k_2}(a,b)\subset \KTalg{n+k_1+k_2}(a,b)$
by
\begin{enumerate}[label=(I-\arabic*),leftmargin=*]
\item\label{item:alg-III} $\CKTideal{n}{k_1}{k_2}(a,b)=\KTalg{n+k_1+k_2}(a,b)$ if there is some pair
  $i<j\leq k_1$ or $i>j>n+k_1$ so that $i\sim_{a,b}j$.  
\item\label{item:alg-II} the span of the set of generators in $\KTalg{n+k_1+k_2}(a,b)$ which label some circle $Z_i$ with $i\leq k_1$ or $i>n+k_1$ by $X$. 
\end{enumerate}
Then $\CKTalg{n}{k_1}{k_2}$ is the category with objects $\rCrossingless{n}{k_1}{k_2}$ and 
\[
  \CKTalg{n}{k_1}{k_2}(a,b)=\KTalg{n+k_1+k_2}(a,b)/\CKTideal{n}{k_1}{k_2}(a,b).
\]
In other words, if we let $\CKTalgT{n}{k_1}{k_2}$ be the full
subcategory of $\KTalg{n+k_1+k_2}$ spanned by objects in
$\rCrossingless{n}{k_1}{k_2}$ then 
\[
  \CKTalg{n}{k_1}{k_2}=\CKTalgT{n}{k_1}{k_2}/\CKTideal{n}{k_1}{k_2}.
\]

Chen-Khovanov encode the points $1,\dots,k_1$ and $n+k_1+1,\dots,n+k_1+k_2$ by drawing two vertical line segments, \emph{platforms}, in $a\Wmirror{b}$ where $a$ and $\Wmirror{b}$ meet, one containing the first $k_1$ points and the other containing the last $k_2$ points. Then, the points in Case~\ref{item:alg-III} lie on a circle that meets one of the platforms more than once; following Chen-Khovanov, we will sometimes call such a circle a \emph{type III circle}. The points in Case~\ref{item:alg-II} lie on a circle that meets at least one platform, which we will sometimes follow Chen-Khovanov in calling a \emph{type II circle} (if it is not a type III circle).

There is an inclusion
\[
  \imath\co \Crossingless{n+k_1+k_2}\into \Crossingless{n+k_1+k_2+2}
\]
where $\imath(a)$ is obtained by matching the first and last points,
and then matching the remaining points by $a$ (shifted up by
$1$). (That is, if we think of $a$ as an involution of $\{1,\dots,n+k_1+k_2\}$ then
$\imath(a)(1)=n+k_1+k_2+2$ and $\imath(a)(i)=a(i-1)+1$ if $2\leq i\leq n+k_1+k_2+1.$) The map
$\imath$ sends $\rCrossingless{n}{k_1}{k_2}$ to
$\rCrossingless{n}{k_1+1}{k_2+1}$. If $k_1+k_2\geq n$ then $\imath\co
\rCrossingless{n}{k_1}{k_2}\to \rCrossingless{n}{k_1+1}{k_2+1}$ is a bijection.

There is an induced map
\[
  \imath\co \KTalg{n+k_1+k_2}\to \KTalg{n+k_1+k_2+2}
\]
which labels the new circle in $\imath(a)\Wmirror{\imath(b)}$
(containing the points $1$ and $n+k_1+k_2+2$) by $1$.

\begin{remark}\label{rem:cat-inclusion}
  As a map of rings, $\imath$ does not send the unit to the unit, but viewing
  $\KTalg{n+k_1+k_2}$ and $\KTalg{n+k_1+k_2+2}$ as linear categories, $\imath$
  corresponds to the inclusion of a full subcategory.
\end{remark}

\begin{lemma}\label{lem:CK-ideal}
  Given $n,k_1,k_2$ with $k_1+k_2\equiv n\pmod{2}$,
  $\CKTideal{n}{k_1}{k_2}\subset\CKTalgT{n}{k_1}{k_2}$ is a
  2-sided ideal, so $\CKTalg{n}{k_1}{k_2}$ is a linear
  category.
\end{lemma}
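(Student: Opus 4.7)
The plan is to check directly that the composite $\beta\circ\alpha$ lies in $\CKTideal{n}{k_1}{k_2}(a,c)$ for every $\alpha\in\CKTideal{n}{k_1}{k_2}(a,b)$ and $\beta\in\CKTalgT{n}{k_1}{k_2}(b,c)$; the left-ideal statement then follows by the horizontal-reflection anti-involution $x\leftrightarrow\Wmirror{x}$ on $\Crossingless{n+k_1+k_2}$, which interchanges left and right multiplication and preserves the $\rCrossingless{n}{k_1}{k_2}$-subsets. The composite is the image of $\alpha\otimes\beta$ under the Frobenius algebra $V=\ZZ[X]/(X^2)$ applied to the canonical saddle cobordism
\[
  \Sigma\colon a\Wmirror{b}\sqcup b\Wmirror{c}\longrightarrow a\Wmirror{c},
\]
obtained from the trivial product cobordism on $a\Wmirror{b}\sqcup b\Wmirror{c}$ by attaching one $1$-handle along each arc of $b$.

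The first step is a combinatorial description of the connected components of $\Sigma$: they correspond bijectively to equivalence classes of circles in $a\Wmirror{b}\sqcup b\Wmirror{c}\sqcup a\Wmirror{c}$ under the relation generated by ``two circles share a common point in $\{1,\dots,n+k_1+k_2\}$''. Indeed, for each such interface point $p$ there is a vertical chord in $\Sigma$ that ties together the three circles through $p$ (one in each of $a\Wmirror{b}$, $b\Wmirror{c}$, $a\Wmirror{c}$), and every identification of circles within a component of $\Sigma$ arises in this way, since the only nontrivial surgeries are at the interior saddles.

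The main argument then splits into cases. If $a\Wmirror{c}$ already contains a type~III circle, then by~\Item{alg-III} the ideal $\CKTideal{n}{k_1}{k_2}(a,c)$ is the entire morphism space, and there is nothing to prove. Otherwise, it suffices to show that each generator appearing with nonzero coefficient in $\beta\circ\alpha$ labels every type~II circle of $a\Wmirror{c}$ by $X$. For a type~II target $Z^*$ visiting a platform point $p$, the component lemma puts $Z^*$ into the same $\Sigma$-component as the type~II source circles through $p$ in $a\Wmirror{b}$ and $b\Wmirror{c}$; the Frobenius structure of $V$ will then transport $X$-labels from the sources onto $Z^*$. In case~\Item{alg-II} the source $X$-label that $\alpha$ places on a type~II circle propagates directly via multiplication in $V$. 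The main obstacle is case~\Item{alg-III}, where $\alpha$ may be arbitrary: here one must use the $\rCrossingless{n}{k_1}{k_2}$ constraint on $a,b,c$ together with the topology of the $\Sigma$-component containing the type~III source circle (which ties together two distinct platform points linked in $a\Wmirror{b}$ but separated in $a\Wmirror{c}$) to force the TQFT output into the span of labelings with an $X$ on some type~II circle of $a\Wmirror{c}$.
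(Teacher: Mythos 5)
There is a genuine gap at the crux of the lemma. You correctly reduce to showing that (when $a\Wmirror{c}$ has no type III circle) every nonzero term of the product lies in the span described in \Item{alg-II}, and your treatment of the case where $\alpha$ is an \Item{alg-II}-type generator is essentially right: an $X$ on a circle through a platform point $P$ persists through the canonical cobordism (merges either keep the $X$ or kill the term since $X\cdot X=0$; splits send $X\mapsto X\otimes X$), so the circle of $a\Wmirror{c}$ through $P$ is labeled $X$. But in the case you yourself identify as ``the main obstacle''---$a\Wmirror{b}$ containing a type III circle, with $\alpha$ arbitrary, as in \Item{alg-III}---you only state what must be forced, without giving the argument. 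The missing idea (which is the paper's proof, following Chen--Khovanov) is dynamical rather than the static component count you set up: factor the canonical cobordism into elementary merges and splits and track the two points $P,Q$ on the same platform joined by the type III circle of $a\Wmirror{b}$. Either $P$ and $Q$ lie on a common circle at every stage, in which case the circle of $a\Wmirror{c}$ through $P$ and $Q$ is type III and the target morphism space is all of the ideal by \Item{alg-III}; or at some stage a split separates them, and at that split the comultiplication $1\mapsto 1\otimes X+X\otimes 1$, $X\mapsto X\otimes X$ forces the offspring circle through $P$ or through $Q$ to carry an $X$, which then persists to the end as above, producing a type II or III circle of $a\Wmirror{c}$ labeled $X$. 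Your ``component lemma'' about $\pi_0(\Sigma)$ cannot substitute for this: knowing that the type III source circle and the target circles through $P$ and $Q$ lie in one component of $\Sigma$ says nothing about which labels the TQFT can output; the splitting/comultiplication step is exactly what is needed and is absent.

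Two smaller points. Your stated sufficient condition---that each nonzero term labels \emph{every} type II circle of $a\Wmirror{c}$ by $X$---is both stronger than needed and false in general (e.g.\ multiply a generator with one type II circle labeled $X$ and another labeled $1$ by an all-$1$ idempotent); what is needed, and what you in fact use later, is an $X$ on \emph{some} circle meeting a platform. Handling the left-module side by the reflection anti-involution is fine, though the paper simply notes the argument is symmetric.
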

\begin{proof}
  The proof is spelled out by
  Chen-Khovanov~\cite[Proof of Lemma 1]{CK-kh-tangle}, but we repeat
  it here for completeness. Suppose $x$ is a labeling of
  $a\Wmirror{b}$ so that some circle passing through a platform, at a
  point $P$, is labeled $X$. Let $y$ be any labeling of
  $b\Wmirror{c}$. In the sequence of merges and splits relating
  $a\Wmirror{b}\amalg b\Wmirror{c}$ to $a\Wmirror{c}$, the circle
  containing $P$ is always labeled $X$, so, in particular, in the
  product $xy$ a circle passing through a platform is labeled $X$. If
  there is a circle in $a\Wmirror{b}$ which passes through two points,
  $P$ and $Q$, on the same platform then in the cobordism from
  $a\Wmirror{b}\amalg b\Wmirror{c}$ to $a\Wmirror{c}$, either $P$ and
  $Q$ stay on the same circle throughout, in which case $a\Wmirror{c}$
  has a type III circle, or some split occurs to the circle containing
  $P$ and $Q$, in which case either $P$ or $Q$ is labeled $X$ after
  the split. In the latter case, the circle containing that point
  continues to be labeled $X$ throughout the cobordism, giving a type
  II or III circle labeled $X$.
\end{proof}

\begin{lemma}\label{lem:iota-func-iso}
  Given $n,k_1,k_2$ with $k_1+k_2\equiv n\pmod{2}$ and
  $a,b\in\rCrossingless{n}{k_1}{k_2}=\Ob(\CKTalg{n}{k_1}{k_2})$,
  \[
    \imath^{-1}(\CKTideal{n}{k_1+1}{k_2+1}(\imath(a),\imath(b)))=\CKTideal{n}{k_1}{k_2}(a,b).
  \]
  So, $\imath$ induces a homomorphism (functor of linear categories)
  \[
    \imath\co \CKTalg{n}{k_1}{k_2}\to \CKTalg{n}{k_1+1}{k_2+1}
  \]
  which is always full and faithful and injective on objects, and is
  an isomorphism if $k_1+k_2\geq n$.
\end{lemma}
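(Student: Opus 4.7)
The plan is to reduce everything to a single combinatorial observation about the diagram $\imath(a)\Wmirror{\imath(b)}$: it is obtained from $a\Wmirror{b}$ by shifting labels up by $1$ and adjoining a new circle $Z_0$ passing through exactly the two new platform points $1$ and $n+k_1+k_2+2$. Thus $Z_0$ meets both platforms but no other platform point, and the remaining circles of $\imath(a)\Wmirror{\imath(b)}$ are in natural bijection, via the shift, with the circles of $a\Wmirror{b}$; the platform points of $\imath(a)\Wmirror{\imath(b)}$ other than $1$ and $n+k_1+k_2+2$ correspond bijectively to the platform points of $a\Wmirror{b)}$, with the corresponding circles matched.

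Using this I verify the ideal equality in the two cases~\ref{item:alg-III}/\ref{item:alg-II} of the definition of $\CKTideal{}{}{}$. First, I would check that $(\imath(a),\imath(b))$ has a type~III platform collision iff $(a,b)$ does: a collision $i\sim_{\imath(a),\imath(b)} j$ with $i<j\leq k_1+1$ (or $i>j>n+k_1+1$) cannot involve the new points, because $Z_0$'s two points lie on \emph{opposite} platforms; so such a collision descends to a type~III collision of $a\Wmirror{b}$ under the index shift, and conversely. In this case both ideals equal their respective entire hom-spaces, so the equality and fullness are automatic. Otherwise both ideals are the case~\ref{item:alg-II} spans, and one reads off that a generator $\imath(x)\in V(\imath(a)\Wmirror{\imath(b)})$ has a platform circle labeled $X$ iff $x\in V(a\Wmirror{b})$ does, because $\imath$ labels the one ``extra'' platform circle $Z_0$ by $1$ and copies $x$ on the other circles via the bijection. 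This gives $\imath^{-1}(\CKTideal{n}{k_1+1}{k_2+1}(\imath(a),\imath(b)))=\CKTideal{n}{k_1}{k_2}(a,b)$.

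From the ideal equality, the induced functor on quotients is well-defined and faithful. For fullness, observe that any $V$-generator of $V(\imath(a)\Wmirror{\imath(b)})$ labeling $Z_0$ by $X$ already lies in $\CKTideal{n}{k_1+1}{k_2+1}$ (since $Z_0$ passes through a platform), so modulo the ideal we may always represent a morphism by a generator that labels $Z_0$ by $1$, and such generators are exactly those in the image of $\imath$. Injectivity on objects is immediate from the definition of $\imath$ on $\rCrossingless{n}{k_1}{k_2}$. Finally, when $k_1+k_2\geq n$ the object map $\imath\co\rCrossingless{n}{k_1}{k_2}\to\rCrossingless{n}{k_1+1}{k_2+1}$ was already noted (in the paragraph preceding the lemma) to be a bijection, and together with the full-and-faithful assertion this yields an isomorphism of linear categories.

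The main obstacle is purely bookkeeping, namely the index shift by $1$ and the case analysis showing that platform collisions in the enlarged diagram can never involve $Z_0$'s two endpoints; the rest is a direct translation under the circle bijection. No representation-theoretic or homological input is needed beyond the explicit description of $\imath$ given before the lemma.
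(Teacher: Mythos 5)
Your proof is correct, and it is essentially the paper's argument: the paper simply declares the lemma ``immediate from the definitions,'' and your write-up is exactly the routine unwinding of those definitions (the new circle $Z_0$ meets each platform once, so it never creates a type~III collision, and it is labeled $1$, so membership in the case~\ref{item:alg-II} span is unchanged), together with the observation that generators labeling $Z_0$ by $X$ die in the quotient, giving fullness. Nothing in your argument deviates from or adds to what the authors intended.
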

\begin{proof}
  This is immediate from the definitions.
\end{proof}

\begin{figure}
  \centering
  \includegraphics{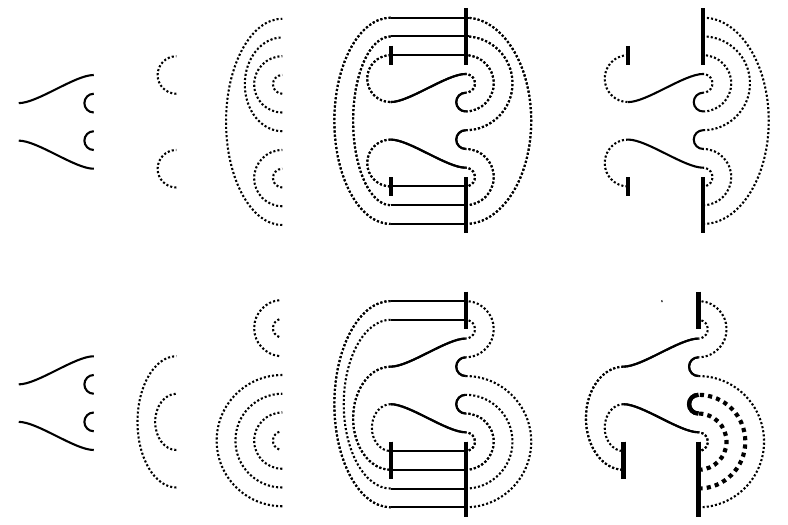}
  \caption{\textbf{Capping off a flat tangle.} Each row, left to
    right: a tangle $\protect\Tan$, crossingless matchings $a,b$, the
    closure $\flatclose{a\Tan\protect\Wmirror{b}}$,
    and the partial closure $a\protect\Tan\protect\Wmirror{b}$. On the
    first row, $m=2$, $n=6$, $h_1=1$, $h_2=1$, $k_1=3$, and $k_2=3$. On the second row, $m=2$,
    $n=6$, $h_1=2$, $h_2=0$, $k_1=4$, and $k_2=2$. On the first row, there are no arcs
    with both endpoints on the same platform (type III arcs); on the
    second row, there is one, drawn in bold.}
  \label{fig:cap-flat}
\end{figure}

\subsection{Platform modules}

Given an $(m,n)$-flat tangle $\Tan$, Chen-Khovanov define an
$(\CKTalg{m}{m-h}{h},\CKTalg{n}{n-k}{k})$-bimodule for all
$(h,k)$ with $0\leq h\leq m$, $0\leq k\leq n$, and
$m-n=2(h-k)$. Their construction extends immediately to give
$(\CKTalg{m}{h_1}{h_2},\CKTalg{n}{k_1}{k_2})$-bimodules
for any $h_1,h_2,k_1,k_2$, with $h_1-h_2=k_1-k_2$, and
some details are easier in the more general setting, so we sketch
their construction there.

Fix $a\in\rCrossingless{m}{h_1}{h_2}$ and
$b\in\rCrossingless{n}{k_1}{k_2}$ with $h_1-h_2=k_1-k_2$,
and an $(m,n)$-flat tangle $\Tan$. Assume that $k_1-h_1\geq 0$; the
other case is symmetric. Form a closed $1$-manifold
$\flatclose{a\Tan\Wmirror{b}}$ by:
\begin{enumerate}[leftmargin=*]
\item Adding $k_1$ horizontal strands below $\Tan$ and $k_2$
  horizontal strands above $T$, to obtain $\ou{\Tan}$.
\item Gluing $\imath^{k_1-h_1}(a)$ to $\ou{\Tan}$ to $\Wmirror{b}$, to obtain
  \[
    \flatclose{a\Tan\Wmirror{b}}=\imath^{k_1-h_1}(a)\thinspace\ou{\Tan}\thinspace\Wmirror{b}.
  \]
\end{enumerate}
See Figure~\ref{fig:cap-flat}. There is a subset
$a\Tan\Wmirror{b}\subset \flatclose{a\Tan\Wmirror{b}}$ which we call the
\emph{partial closure}. The endpoints of the arcs in
$a\Tan\Wmirror{b}$ are on four \emph{platforms}.

Define a submodule $\CKTfuncIdeal{\Tan}{h_1}{h_2}{k_1}{k_2}(a,b)\subset \KTfunc{\ou{\Tan}}(\imath^{k_1-h_1}(a),b)$ by declaring that 
\begin{enumerate}[label=(J-\arabic*),leftmargin=*]
\item\label{item:tan-III} If $a\Tan\Wmirror{b}$ has an arc with both ends on the same platform then $\CKTfuncIdeal{\Tan}{h_1}{h_2}{k_1}{k_2}(a,b)=\KTfunc{\ou{\Tan}}(\imath^{k_1-h_1}(a),b)$. 
\item\label{item:tan-II} Otherwise, $\CKTfuncIdeal{\Tan}{h_1}{h_2}{k_1}{k_2}(a,b)$ is spanned by the generators of $\KTfunc{\ou{\Tan}}(\imath^{k_1-h_1}(a),b)$ which label at least one arc component of the partial closure $a\Tan\Wmirror{b}$ by $X$.
\end{enumerate}
Define 
\[
\CKTfunc{\Tan}{h_1}{h_2}{k_1}{k_2}(a,b)=\KTfunc{\ou{\Tan}}(\imath^{k_1-h_1}(a),b)/\CKTfuncIdeal{\Tan}{h_1}{h_2}{k_1}{k_2}(a,b).
\]

Following Chen-Khovanov, we will call the arcs in
Case~\ref{item:tan-III} \emph{type III arcs} and the arcs in
Case~\ref{item:tan-II} \emph{type II arcs} (if they are not
type III arcs). We call a circle in $\flatclose{a\Tan\Wmirror{b}}$ containing a
type III arc a \emph{type III circle} and a circle in
$\flatclose{a\Tan\Wmirror{b}}$ containing a type II arc a \emph{type II circle}
(if it is not a type III circle). The following reformulation
of these conditions will be useful:

\begin{lemma}\label{lem:type-III}
  A circle $Z$ in $\flatclose{a\Tan\Wmirror{b}}$ is a type III circle if and
  only if $Z$ intersects some platform more than once. A circle $Z$ in
  $\flatclose{a\Tan\Wmirror{b}}$ is a type II circle if and only if $Z$
  intersects each platform at most once and some platform at least
  once.
\end{lemma}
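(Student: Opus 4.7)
The plan is to prove both characterizations by direct geometric analysis pivoting on one key structural fact: each maximal connected component of $\flatclose{a\Tan\Wmirror{b}}\setminus(a\Tan\Wmirror{b})$, viewed as a path between its two platform endpoints, connects two \emph{distinct} platforms. In the case $k_1\geq h_1$ (with the other case symmetric, obtained by applying $\imath$ to $b$ instead of $a$), this follows by a short enumeration: the horizontal strands of $\ou{\Tan}$ added above $T$ (respectively below $T$) run from the top (respectively bottom) portion of the left gluing line to the top (respectively bottom) portion of the right gluing line, and the nested outer arcs of $\imath^{k_1-h_1}$ on the left connect top-outer points to bottom-outer points. Tracing through, the three possible types of maximal closure-completing paths connect top-left to top-right, bottom-left to bottom-right, or top-right to bottom-right.

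Given this, the first claim is nearly immediate. The $(\Leftarrow)$ direction is trivial: a type III arc contributes two endpoints on the same platform. For $(\Rightarrow)$, I argue by contradiction. Suppose $Z$ meets a platform $P$ at two points $p_1,p_2$ but contains no type III arc. Then writing $Z$ as the cyclic alternating union of its partial-closure arcs and its maximal closure-completing paths, each such piece connects two distinct platforms. In particular, the closure-completing paths leaving $p_1$ and $p_2$ are forced (by the enumeration above) to land on the same ``partner'' platform $Q$, at two $Q$-points determined by the straight-across horizontal-strand structure of $\ou{\Tan}$. The partial-closure arcs of $Z$ leaving $p_1,p_2$ also end on platforms; planarity of the embedded configuration then forces these arcs and the two closure-completing paths to assemble into two disjoint circles rather than the single circle $Z$, contradicting the assumption that $p_1,p_2$ lie on the same $Z$. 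Hence some partial-closure arc in $Z$ must be type III.

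The second claim follows formally. Every arc of the partial closure has both endpoints on platforms, and is type III if the two endpoints are on the same platform and type II otherwise. Thus $Z$ contains no type III arc iff (by the first claim) $Z$ meets each platform at most once; and given this, $Z$ contains a type II arc iff $Z$ meets some platform at least once.

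The main obstacle is making the planarity step in the $(\Rightarrow)$ direction of the first claim rigorous. Turning the intuition ``planarity forces the configuration to split into two disjoint circles'' into a careful argument requires exploiting the non-crossing structure of both the matching of platform points induced by the partial closure (arising from the flatness of $a$, $\Tan$, and $\Wmirror{b}$) and the matching induced by the closure-completion (arising from the horizontal $\ou{\Tan}$-strands and the nested $\imath^{k_1-h_1}$ outer arcs), and then combining them to compute the cycle structure.
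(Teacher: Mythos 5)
There is a genuine gap, and it sits exactly where the lemma's content lies: the implication that if $Z$ meets some platform more than once then $Z$ contains a type III arc. Two problems. First, your intermediate assertion that the closure-completing paths leaving two points $p_1,p_2$ of $Z$ on the same platform must land on the same ``partner'' platform $Q$ is false in general, and in fact contradicts your own enumeration: in the case $k_1\geq h_1$, a point of the bottom-right platform is joined by its closure-completing path either to the bottom-left platform (horizontal strand at a height above the nested $\imath^{k_1-h_1}$ arcs) or to the top-right platform (strand--nested-arc--strand path), depending on its height, so for the right-hand platforms the partner is not determined; likewise ``two $Q$-points determined by the straight-across horizontal-strand structure'' fails for the nested-arc paths. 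Second, and more seriously, the decisive step --- ``planarity of the embedded configuration then forces these arcs and the two closure-completing paths to assemble into two disjoint circles'' --- is not an argument: $Z$ generally contains many more arcs than the four you name, four arcs with eight endpoints do not close up by themselves, and no mechanism is given that converts planarity into the contradiction. You acknowledge this yourself in the final paragraph; but that deferred step is precisely the lemma, so the proposal is a plan rather than a proof, and the one concrete structural claim it does make is wrong.

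For comparison, the paper proves this implication by a bigon/Euler-measure argument: it first observes that $Z$ contains a type III arc if and only if there is an embedded bigon with one edge on $Z$ and one edge on a platform, with interior disjoint from $Z$ and the platforms, and uses the horizontal strands of $\ou{\Tan}$ to reduce to bigons against $P$, the union of the two left platforms. It then cuts the disk $D$ bounded by $Z$ along $P$ (minus its tips), notes that every complementary polygon has an even number of edges so only bigons have positive Euler measure, and uses additivity of Euler measure to force at least two bigons; if some bigon avoids the tips of $P$ it exhibits a type III arc, and in the remaining case (both bigons contain tips) it extends the platforms to infinity and runs a second Euler-measure count in the exterior of $D$, where the hypothesis that $Z$ meets a platform more than once rules out both non-compact regions having only two corners. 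If you want to salvage your combinatorial route, you would have to formulate and prove an actual statement about the two non-crossing matchings (the partial-closure arcs on one side of the gluing lines, the closure-completion on the other) showing that a cycle of their union cannot revisit a platform without containing a same-platform chord; that is doable but is real work of comparable length, and the partner-platform claim would still need to be repaired. Your deduction of the type II statement from the type III statement is fine and matches the paper.
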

\begin{proof}
  We start with the statement about type III circles. That a type III
  circle intersects some platform more than once is immediate from the
  definition. For the other direction, observe first that $Z$ contains
  a type III arc if and only if there is an embedded bigon in $\RR^2$
  with one edge on $Z$, one edge on a platform, and interior disjoint
  from $Z$ and the platforms. Let $P$ be the union of the top-left and
  bottom-left platforms.  Since the two top platforms are connected by
  horizontal lines in $\ou{\Tan}$, as are the two bottom platforms,
  existence of a bigon as above is equivalent to existence of a bigon
  with one edge on $Z$, one edge on $P$, and interior disjoint from
  $Z\cup P$.

  \begin{figure}
    \centering
    \includegraphics{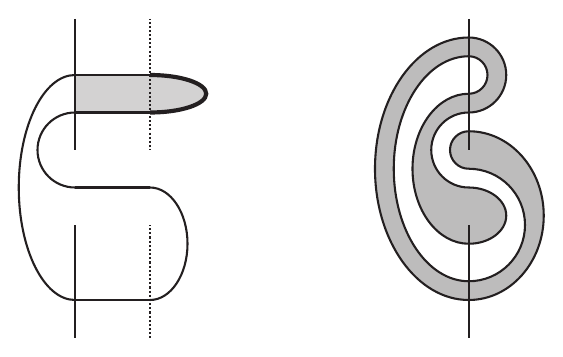}
    \caption{\textbf{Type III circles and arcs.} Left: a bigon
      (shaded) and the corresponding  type III arc (bold). The
      platforms not in $P$ are dotted. Right: a
      case where both bigons in $D$ (shaded) contain tips.}
    \label{fig:type-3}
  \end{figure}
  
  Let $D$ be the disk inside $Z$, i.e., the bounded region of
  $\RR^2\setminus Z$. The platforms $P$ cut $D$ into a collection of
  polygons, two of which might contain the \emph{tips of $P$}---the
  segments of $P\setminus Z$ adjacent to the endpoints of $P$. (See
  Figure~\ref{fig:type-3}.) Let $P'$ be the complement of the tips in
  $P$.  Since $D$ intersects some platform more than once,
  $D\setminus P'$ has at least two connected components. Recall that the Euler measure of a $2n$-gon is $1-n/2$. Every polygon
  in $D\setminus P'$ has an even number of edges. Consequently, the
  only polygons in $D\setminus P'$ with positive Euler measure are
  bigons. Since the Euler measure of $P'$ is $1$ and Euler measure is
  additive, at least two of the components of $D\setminus P'$ must be
  bigons, and if $D\setminus P'$ has exactly two bigons then all other
  components of $D\setminus P'$ are rectangles.

  If some component of $D\setminus P'$ is a bigon not containing a tip
  of $P$ then we are done. In the remaining case, $D\setminus P'$
  consists of two bigons, both of which contain tips of $P$, and some
  number of rectangles.

  In this last case, consider the complement $D'=\RR^2\setminus
  D$. View the platforms as extending to infinity in $\RR^2$. Since
  $D$ contains both tips, the platforms $P$ cut the punctured disk
  $D'$ into two non-compact regions and some polygons. At least one of
  the non-compact regions has more than $2$ corners, for otherwise,
  $Z$ will be a circle passing through each platform in $P$ once,
  contradicting the hypothesis. It follows by considering the Euler measure that
  at least one of the regions in $D'$ is a bigon, which implies that
  there is a type III arc.

  The statement about type II circles is immediate from the statement
  about type III circles.
\end{proof}

\begin{remark}
  In the special case that $h_1=h_2=k_1=k_2=0$,
  $\CKTfuncIdeal{\Tan}{0}{0}{0}{0}(a,b)=\{0\}$, so
  $\CKTfunc{\Tan}{0}{0}{0}{0}=\KTfunc{\Tan}$, the arc algebra
  bimodule. The case considered by Chen-Khovanov is $h_2=m-h_1$ and
  $k_2=n-k_1$.
\end{remark}

\begin{convention}\label{conv:bk}
To shorten notation, we will often write $\bh=(h_1,h_2)$ and $\bk=(k_1,k_2)$, so
\begin{align*}
  \CKTalgB{m}{\bh}&=\CKTalg{m}{h_1}{h_2} &
  \CKTfuncIdealB{\Tan}{\bh}{\bk}&=\CKTfuncIdeal{\Tan}{h_1}{h_2}{k_1}{k_2} &
  \CKTfuncB{\Tan}{\bh}{\bk}&=\CKTfunc{\Tan}{h_1}{h_2}{k_1}{k_2}.
\end{align*}
\end{convention}

Composing the functor
$\imath^{k_1-h_1}\co \CKTalgTB{m}{\bh}\into \CKTalgTB{m}{\bk}$ with
the inclusion $\jmath\co \CKTalgTB{m}{\bk}\into \KTalg{m+k_1+k_2}$
gives a functor $\jmath\circ \imath^{k_1-h_1}$ of linear categories. Given an
$(\KTalg{m+k_1+k_2},\KTalg{n+k_1+k_2})$-bimodule $M$ (regarded as a functor), we can restrict
along the functor $\jmath\circ \imath^{k_1-h_1}\otimes \jmath$ to obtain an
$(\CKTalgTB{m}{\bh},\CKTalgTB{n}{\bk})$-bimodule
$(\jmath\circ \imath^{k_1-h_1}\otimes \jmath)^*M$. (Compare Remark~\ref{rem:cat-inclusion}.)

\begin{proposition}\cite{CK-kh-tangle}\label{prop:CK-Ideal}
  The subsets
  \[
    \CKTfuncIdealB{\Tan}{\bh}{\bk}(a,b)\subset \KTfunc{\ou{\Tan}}(\imath^{k_1-h_1}(a),b)
  \]
  form a submodule of the
  $(\CKTalgTB{m}{\bh},\CKTalgTB{n}{\bk})$-bimodule
  $(\jmath\circ \imath^{k_1-h_1}\otimes \jmath)^*\KTfunc{\ou{\Tan}}$, and
  \begin{align*}
  \CKTidealB{m}{\bh}\cdot \bigl((\jmath\circ \imath^{k_1-h_1}\otimes \jmath)^*\KTfunc{\ou{\Tan}}\bigr)&\subset \CKTfuncIdealB{\Tan}{\bh}{\bk}\\
  \bigl((\jmath\circ \imath^{k_1-h_1}\otimes \jmath)^*\KTfunc{\ou{\Tan}}\bigr)\cdot \CKTidealB{n}{\bk}&\subset \CKTfuncIdealB{\Tan}{\bh}{\bk},
  \end{align*}
  so $\CKTfuncB{\Tan}{\bh}{\bk}$ inherits the structure of an
  $(\CKTalgB{m}{\bh},\CKTalgB{n}{\bk})$-bimodule.
\end{proposition}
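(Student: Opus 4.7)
The plan is to adapt the strategy of Lemma~\ref{lem:CK-ideal} to the bimodule setting, using Lemma~\ref{lem:type-III} to translate the arc-based definitions of type II and III arcs into intersection statements for circles of the closed $1$-manifold $\flatclose{a\Tan\Wmirror{b}}$ with the four platforms. The underlying Frobenius-algebraic principle is unchanged: in every canonical merge/split cobordism, if a fixed point $P$ on a platform lies on an $X$-labeled circle before the cobordism, it still lies on an $X$-labeled circle afterward (a merge sends $X\otimes\text{anything}$ to $X$ or $0$, and a split sends $X$ to $X\otimes X$). Moreover, the only way two points $P,Q$ on the same platform can come to lie on distinct circles is via a split that separates them, which necessarily puts an $X$-label on at least one of the resulting platform-meeting components.

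I would verify the left-action statements; the right-action ones are symmetric. Fix $a',a\in\rCrossinglessB{m}{\bh}$ and $b\in\rCrossinglessB{n}{\bk}$, and consider $\alpha\in\KTalg{m+h_1+h_2}(a',a)$ and $x\in\KTfunc{\ou{\Tan}}(\imath^{k_1-h_1}(a),b)$. The product $\alpha\cdot x$ is computed by first applying $\imath^{k_1-h_1}$ to $\alpha$ (labeling the new circles by $1$) and then performing the canonical cobordism from $\imath^{k_1-h_1}(a')\Wmirror{\imath^{k_1-h_1}(a)}\sqcup\imath^{k_1-h_1}(a)\,\ou{\Tan}\,\Wmirror{b}$ to $\imath^{k_1-h_1}(a')\,\ou{\Tan}\,\Wmirror{b}$. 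The four platforms, and the points at which the partial closures meet them, are fixed throughout.

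First I would show $\CKTfuncIdealB{\Tan}{\bh}{\bk}$ is closed under the left action. If $x$ lies in case~\ref{item:tan-II}, so that some arc of $a\Tan\Wmirror{b}$ meeting a platform is labeled $X$, then by the tracking principle the corresponding circle of $\imath^{k_1-h_1}(a')\,\ou{\Tan}\,\Wmirror{b}$ still meets that platform and is still labeled $X$, giving (via Lemma~\ref{lem:type-III}) a type II or III arc of $a'\Tan\Wmirror{b}$ labeled $X$. If $x$ lies in case~\ref{item:tan-III}, so that some platform meets a single circle of $\flatclose{a\Tan\Wmirror{b}}$ at two points $P,Q$, then as in Lemma~\ref{lem:CK-ideal} either $P$ and $Q$ remain on the same circle of $\imath^{k_1-h_1}(a')\,\ou{\Tan}\,\Wmirror{b}$ (giving case~\ref{item:tan-III} for $a'$), or a split separates them, in which case at least one of $P,Q$ becomes and remains $X$-labeled (giving case~\ref{item:tan-II}). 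A parallel argument handles the second statement of the proposition: if $\alpha\in\CKTidealB{m}{\bh}(a',a)$ lies in case~\ref{item:alg-II} or~\ref{item:alg-III}, the same tracking shows that its product with any $x$ lies in $\CKTfuncIdealB{\Tan}{\bh}{\bk}(\imath^{k_1-h_1}(a'),b)$.

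The main obstacle is bookkeeping: one must carefully distinguish the full closure $\flatclose{a\Tan\Wmirror{b}}$, on which the TQFT performs its merges and splits, from the partial closure $a\Tan\Wmirror{b}$ whose arcs determine the ideal, and separately track all four platforms through the cobordism. Lemma~\ref{lem:type-III} is precisely what makes this distinction manageable, reducing everything to the kind of platform tracking already carried out in Lemma~\ref{lem:CK-ideal}.
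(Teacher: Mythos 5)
Your proposal is correct and follows essentially the same route as the paper: both come down to the $X$-label and platform-point tracking argument of Lemma~\ref{lem:CK-ideal}, reformulated via Lemma~\ref{lem:type-III}, applied to the canonical cobordisms defining the bimodule structure. The only cosmetic difference is that you carry the shift $\imath^{k_1-h_1}$ along inline (noting the new circles are labeled $1$ and the platforms are compatible), whereas the paper first treats the case $\bh=\bk$ and then transfers to general $\bh$ using Lemma~\ref{lem:iota-func-iso}; these amount to the same verification.
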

\begin{proof}
  The same argument as in the proof of Lemma~\ref{lem:CK-ideal} shows
  that the subsets
  \[
    \CKTfuncIdealB{\Tan}{\bk}{\bk}(a,b)\subset \KTfunc{\ou{\Tan}}(a,b),
  \]
  for $a\in\rCrossinglessB{m}{\bk}$ and
  $b\in\rCrossinglessB{n}{\bk}$, form a submodule of
  $\KTfunc{\ou{\Tan}}$ and that 
  \begin{align*}
  \CKTidealB{m}{\bk}\cdot \bigl((\jmath\otimes \jmath)^*\KTfunc{\ou{\Tan}}\bigr)&\subset \CKTfuncIdealB{\Tan}{\bk}{\bk}\\
  \bigl((\jmath\otimes \jmath)^*\KTfunc{\ou{\Tan}}\bigr)\cdot \CKTidealB{n}{\bk}&\subset \CKTfuncIdealB{\Tan}{\bk}{\bk}.
  \end{align*}
  By Lemma~\ref{lem:iota-func-iso},
  $(\imath^{k_1-h_1})^{-1}(\CKTidealB{m}{\bk}) =
  \CKTidealB{m}{\bh}$. The result follows.
\end{proof}

\begin{proposition}\cite{CK-kh-tangle}\label{prop:CK-cube-maps-descend}
  Given a non-flat tangle $\Tan$, the maps in the cube of resolutions
  \[\KTfunc{\ou{\Tan_v}}(\imath^{k_1-h_1}(a),b)\to
  \KTfunc{\ou{\Tan_w}}(\imath^{k_1-h_1}(a),b)\]
  send
  $\CKTfuncIdealB{\Tan_v}{\bh}{\bk}(a,b)$ to
  $\CKTfuncIdealB{\Tan_w}{\bh}{\bk}(a,b)$ and hence descend to homomorphisms (natural transformations)
  \[\CKTfuncB{\Tan_v}{\bh}{\bk}\{-|v|\}\to
    \CKTfuncB{\Tan_w}{\bh}{\bk}\{-|w|\}.\]
\end{proposition}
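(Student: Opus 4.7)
The plan is to reduce to the case of a single edge of the cube and then mimic the proof of \Lemma{CK-ideal}. First observe that since compositions of edge maps are the higher-arity cube maps, it suffices to treat a single edge $v \to w$ with $|w| = |v|+1$. The associated map on $\KTfunc{\ou{\Tan_v}}(\imath^{k_1-h_1}(a),b) \to \KTfunc{\ou{\Tan_w}}(\imath^{k_1-h_1}(a),b)$ is induced by the $V$-TFT applied to a saddle cobordism between the closed $1$-manifolds $\flatclose{a\Tan_v\Wmirror{b}}$ and $\flatclose{a\Tan_w\Wmirror{b}}$. The crucial geometric observation is that this saddle is localized in the interior of the tangle box and is therefore \emph{disjoint from all four platforms and from the strands of $\imath^{k_1-h_1}(a)$ and $\Wmirror{b}$}. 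After closure, the saddle is either a merge (two circles become one) or a split (one circle becomes two) acting on the circles of $\flatclose{a\Tan_v\Wmirror{b}}$.

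Next, I would translate conditions \ref{item:tan-III} and \ref{item:tan-II} from the partial closure to the full closure via \Lemma{type-III}: a generator $x$ lies in $\CKTfuncIdealB{\Tan_v}{\bh}{\bk}(a,b)$ iff each term of $x$, viewed as a labeling of $\flatclose{a\Tan_v\Wmirror{b}}$, either involves a type III circle (case \ref{item:tan-III}), or labels some type II circle by $X$ (case \ref{item:tan-II}). The claim then reduces to the following statement: if a labeling $x$ of $\flatclose{a\Tan_v\Wmirror{b}}$ either contains a type III circle or labels some platform-meeting circle by $X$, then every term of the image under merge/split does likewise for $\flatclose{a\Tan_w\Wmirror{b}}$. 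This is essentially the content of \Lemma{CK-ideal}: the proof given there tracks precisely how type II/III circles propagate across saddle cobordisms, using that $X \cdot (-) = X$ under the Frobenius multiplication with $X$ present, and that comultiplication of $X$ gives $X \otimes X$ while comultiplication of $1$ gives $1 \otimes X + X \otimes 1$.

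The only point needing minor rewording compared to \Lemma{CK-ideal} is that the saddle may split a type III circle $Z$ (meeting some platform $P$ at $p_1, p_2$) into two circles $C_1, C_2$ where $p_1 \in C_1$ and $p_2 \in C_2$, so neither is type III. Here I would argue: regardless of whether $Z$ was labeled $1$ or $X$, in every term of the comultiplication at least one of $C_1, C_2$ is labeled $X$; and since each $C_i$ meets the platform $P$, each such circle is at least type II, so the resulting labeling lies in $\CKTfuncIdealB{\Tan_w}{\bh}{\bk}(a,b)$ via \ref{item:tan-II}. The other sub-cases (saddle merges $Z$ with another circle, saddle splits $Z$ keeping $p_1, p_2$ together, saddle disjoint from $Z$) preserve a type III or $X$-labeled type II circle directly.

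The main obstacle is the bookkeeping in this last sub-case; once that is handled, the descent to $\CKTfuncB{\Tan_v}{\bh}{\bk}\{-|v|\} \to \CKTfuncB{\Tan_w}{\bh}{\bk}\{-|w|\}$ is automatic, as is the compatibility with the grading shifts, which is already built into the cube-of-resolutions conventions.
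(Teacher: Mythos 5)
Your proposal is correct and follows essentially the same route as the paper's proof: reduce to a single saddle (edge of the cube), use \Lemma{type-III} to rephrase conditions \ref{item:tan-III}--\ref{item:tan-II} in terms of circles of the full closure, propagate an $X$-label on a platform-meeting circle through the merge/split, and in the type~III case use that the split sends $1\mapsto 1\otimes X+X\otimes 1$ and $X\mapsto X\otimes X$ so that some platform-meeting circle is labeled $X$. The only difference is organizational (you case on what the saddle does to $Z$, while the paper assumes no type~III circle at $w$ and deduces the split), and your parenthetical ``so neither is type III'' is an unneeded and not-quite-accurate aside that does not affect the argument.
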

\begin{proof}
  Suppose $y$ is a labeling of
  $\imath^{k_1-h_1}(a)\ou{\Tan_w}\Wmirror{b}$ and $x$ is a labeling
  of $\imath^{k_1-h_1}(a)\ou{\Tan_v}\Wmirror{b}$ so that $(w,y)$
  occurs as a term in $\bdy(v,x)$. If there is a circle in
  $\imath^{k_1-h_1}(a)\ou{\Tan_v}\Wmirror{b}$ passing through a
  point $P$ on a platform which is labeled $X$ by $x$ (a type II
  circle labeled $X$) then the circle in
  $\imath^{k_1-h_1}(a)\ou{\Tan_w}\Wmirror{b}$ containing $P$ is also
  labeled $X$.

  The more interesting case is that there is an arc in the partial
  closure $a\Tan_v\Wmirror{b}$ with both endpoints on the same
  platform (a type III arc). Let $Z\subset \flatclose{a\Tan_v\Wmirror{b}}$ be
  the corresponding circle. By Lemma~\ref{lem:type-III}, if
  $\flatclose{a\Tan_w\Wmirror{b}}$ does not have a type III circle then the
  circle $Z$ must split into two circles, each of which intersects a
  platform. Since the split map sends $X$ to $X\otimes X$ and $1$ to
  $1\otimes X+X\otimes 1$, one of these circles must be labeled $X$ by
  $y$, as desired.
\end{proof}

\begin{definition}
  By Propositions~\ref{prop:CK-Ideal}
  and~\ref{prop:CK-cube-maps-descend}, associated to a non-flat tangle
  $\Tan$ is a cube of $(\CKTalgB{m}{\bh},\CKTalgB{n}{\bk})$-bimodules:
  $v\mapsto \CKTfuncB{\Tan_v}{\bh}{\bk}\{-|v|-N_++2N_-\}$. Let
  $\CKTfuncB{\Tan}{\bh}{\bk}$ be the total complex of this cube, with
  homological grading shifted down by $N_+$.
\end{definition}

\begin{lemma}\label{lem:restrict-invt}
  There are isomorphisms
  \[
    \CKTfunc{\Tan}{h_1}{h_2}{k_1}{k_2}\cong 
    (\imath\otimes\Id)^*\CKTfunc{\Tan}{h_1+1}{h_2+1}{k_1}{k_2}\cong 
    (\Id\otimes\imath)^*\CKTfunc{\Tan}{h_1}{h_2}{k_1+1}{k_2+1}.
  \]
\end{lemma}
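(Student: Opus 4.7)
The plan is to unpack both isomorphisms as identifications of quotient modules. I assume $k_1-h_1\geq 0$ throughout; the other case is symmetric. The basic observation is that $\imath^{k_1-h_1-1}\circ\imath=\imath^{k_1-h_1}$, and $\ou{\Tan}$ depends only on $(k_1,k_2)$, so for the first isomorphism the ambient modules $\KTfunc{\ou{\Tan}}(\imath^{k_1-h_1}(a),b)$ on the two sides are literally the same abelian group, and it will suffice to show the two ideals coincide---then the identity map is the desired isomorphism.

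For this ideal comparison, observe that the target partial closure $\imath(a)\Tan\Wmirror{b}$ differs from the source partial closure $a\Tan\Wmirror{b}$ only in the outermost arc $\alpha$ of $\imath(a)$, which is a Type~II arc running from the enlarged top-left to the enlarged bottom-left platform; no original arc changes type. Hence the target ideal is the source ideal together with the extra requirement that the circle $Z$ in the full closure containing $\alpha$ not be labeled $X$. The key step---and what I expect to be the only subtle point---is to show this extra requirement is redundant by tracing $Z$. With $e=k_1-h_1$, the two endpoints of $\alpha$ on the right boundary of $\imath^{k_1-h_1}(a)$ sit at position $e$ from the top and bottom, glue through the $e$-th top and bottom horizontal strands of $\ou{\Tan}$, and so emerge at position $e$ from the top and bottom on the left boundary of $\Wmirror{b}$; since $e\leq k_1$, these lie on the source top-right and bottom-right platforms. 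The arc $\beta$ of the source partial closure passing through position $e$ on top of $\Wmirror{b}$ has both endpoints on source platforms (as all arcs of a partial closure do), so is Type~II or III in the source, and lies on $Z$; thus labeling $Z$ by $X$ already labels $\beta$ by $X$ and sits in the source ideal.

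For the second isomorphism, the full closures no longer match: writing $\ou{\Tan}'$ for the closure that adds $k_1+1$ top and $k_2+1$ bottom horizontal strands to $\Tan$, the target uses $\imath^{k_1-h_1+1}(a)\,\ou{\Tan}'\,\Wmirror{\imath(b)}$, which gains over the source one new outermost outer arc on each of $\imath^{k_1-h_1+1}(a)$ and $\Wmirror{\imath(b)}$ plus the two new outermost horizontal strands of $\ou{\Tan}'$. These four pieces assemble into a single closed loop $L$ disjoint from the rest, canonically identifying the target ambient module with (source ambient module)$\otimes V(L)$. The new outermost arc of $\Wmirror{\imath(b)}$ is a Type~II arc on $L$ in the target partial closure and is the only new arc, so the target ideal is the source ideal tensored with $V(L)$ together with the single constraint that $L$ not be labeled $X$. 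Killing $X\in V(L)$ leaves $\langle 1\rangle$, and a short grading check---the extra $\{1\}$ quantum shift on the target side cancels the internal degree $-1$ of $1\in V$---will confirm the projection is a degree-preserving isomorphism. Compatibility with the bimodule actions and the cube differentials then follows from naturality of the TQFT constructions on the full closure.
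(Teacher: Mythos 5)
Your unpacking is, in substance, exactly what the paper has in mind: its own proof of this lemma is just ``immediate from the definitions,'' and your two mechanisms --- identical ambient module with identical ideal when the full closures literally coincide, and an extra disjoint circle $L$ whose $X$-labelled generators are killed so that only $1\in V(L)$ survives, with the additional $\{1\}$ shift absorbing the degree $-1$ of $1$ --- are the right content. In particular your key observation for the first isomorphism, that the full-closure circle through the new arc of $\imath(a)$ already contains an arc of the source partial closure (traced through the $e$-th horizontal strands, $e=k_1-h_1$), so the extra type~II condition is redundant, is correct, as is the check that no old arc changes type and that the new arc is never type~III.

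There is one genuine, if small, gap: under your standing assumption $k_1-h_1\geq 0$, your first argument fails in the boundary subcase $k_1=h_1$. There the target parameters satisfy $k_1-(h_1+1)<0$, so the paper's closure construction switches to the symmetric convention: $\CKTfunc{\Tan}{h_1+1}{h_2+1}{k_1}{k_2}(\imath(a),b)$ is built from $\imath(a)\,\ou{\Tan}'\,\Wmirror{\imath(b)}$ with $h_1+1$ strands below and $h_2+1$ above, which is \emph{not} the same $1$-manifold as $a\,\ou{\Tan}\,\Wmirror{b}$; it differs by one extra outer circle, so ``the identity map is the desired isomorphism'' does not apply, and your tracing argument also degenerates at $e=0$ (the new arc of $\imath(a)$ then lies on the extra circle, disjoint from every source arc, so the new condition is not redundant). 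The repair is already in your proposal: this subcase is of the extra-circle type and is handled verbatim by your argument for the second isomorphism (equivalently, by the left--right mirror symmetry you invoke at the outset). With that subcase rerouted, the proof is complete, granting the routine verification that the identifications commute with the bimodule actions and cube maps, which is fine to dispatch as you do since the extra circle stays labelled $1$ and never participates in a saddle.
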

\begin{proof}
  This is immediate from the definitions.
\end{proof}

\begin{theorem}\cite{CK-kh-tangle}\label{thm:comb-invariance}
  Up to homotopy equivalence of chain complexes of bimodules,
  $\CKTfuncB{\Tan}{\bh}{\bk}$ is invariant under Reidemeister
  moves.
\end{theorem}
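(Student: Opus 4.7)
The plan is to descend the known Reidemeister invariance for Khovanov's arc-algebra bimodules (\Part{KT-prop-1}) to the platform quotients. For diagrams $\Tan, \Tan'$ related by a single Reidemeister move, the proof of arc-algebra invariance produces chain maps $f \co \KTfunc{\ou{\Tan}} \to \KTfunc{\ou{\Tan'}}$ and $g \co \KTfunc{\ou{\Tan'}} \to \KTfunc{\ou{\Tan}}$, together with chain homotopies $h, h'$ witnessing $fg \simeq \Id$ and $gf \simeq \Id$; each of these four maps is a signed sum of cobordism-induced maps between corresponding resolutions, supported in a small disk inside the tangle region. I would show that each of $f, g, h, h'$ carries $\CKTfuncIdealB{\cdot}{\bh}{\bk}$ into $\CKTfuncIdealB{\cdot}{\bh}{\bk}$, so that they descend to maps between $\CKTfuncB{\Tan}{\bh}{\bk}$ and $\CKTfuncB{\Tan'}{\bh}{\bk}$; since composition and the identity also descend, the induced maps then form a homotopy equivalence of chain complexes of $(\CKTalgB{m}{\bh},\CKTalgB{n}{\bk})$-bimodules.

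The core step is an analog of \Proposition{CK-cube-maps-descend} for each cobordism summand of $f, g, h, h'$. Suppose a generator $x$ of $\KTfunc{\ou{\Tan_v}}(\imath^{k_1-h_1}(a),b)$ lies in $\CKTfuncIdealB{\Tan_v}{\bh}{\bk}(a,b)$, and let $x'$ appear in the image of $x$ under such a summand. If $x$ lies in the ideal by \Part{tan-III}, then by \Lemma{type-III} some circle in $\flatclose{a\Tan_v\Wmirror{b}}$ intersects a platform more than once; since each cobordism summand acts as the identity on the capping arcs $\imath^{k_1-h_1}$, on $a$, on $\Wmirror{b}$, and on the platforms themselves, that multiple intersection survives in $\flatclose{a\Tan'_w\Wmirror{b}}$, so $x'$ again lies in the ideal by \Part{tan-III}. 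If instead $x$ lies in the ideal by \Part{tan-II}, then some circle $Z$ through a platform point $P$ is labeled $X$; the Frobenius-algebra argument from \Proposition{CK-cube-maps-descend} applies summand-by-summand, because every merge in the cobordism transports an $X$ on the $P$-containing input to an $X$ on the $P$-containing output, and every split sends $X$ to $X \otimes X$, labeling both $P$-adjacent daughter circles by $X$.

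The main obstacle is Reidemeister III, where both the equivalence and the homotopies have several cobordism summands of differing topology, so that one must verify the preceding bookkeeping uniformly across summands. However, since each Reidemeister cobordism is supported in a disk disjoint from the platforms and the capping arcs, the combinatorics of which arc of the partial closure meets which platform is identical in $\Tan_v$ and $\Tan'_w$---only the connectivity of the non-platform strands inside the tangle is modified---so the type II/III bookkeeping above is uniform across the summands of every Reidemeister move, and no input beyond \Part{KT-prop-1} is needed.
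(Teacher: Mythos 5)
Your overall strategy (descend an explicit homotopy-equivalence datum $f,g,h,h'$ to the platform quotient) could in principle be made to work, but the key verification has a genuine gap: your treatment of the type III case is wrong as stated. You claim that because the cobordisms are supported in a disk away from the platforms and capping arcs, ``the multiple intersection survives,'' so a generator in the ideal via \Part{tan-III} maps to a generator in the ideal via \Part{tan-III}. What survives are the intersection \emph{points} of the $1$-manifold with the platforms; what need not survive is a single circle of $\flatclose{a\Tan'_w\Wmirror{b}}$ containing two of them. A saddle inside the tangle region re-routes strands and can split a circle meeting a platform twice into two circles each meeting a platform once --- this is exactly the ``more interesting case'' of \Proposition{CK-cube-maps-descend}, and the maps $f,g,h,h'$ for Reidemeister moves contain such saddles just as the cube differential does. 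So persistence of the type III condition fails, and with it your claim in the last paragraph that ``which arc of the partial closure meets which platform is identical'' at source and target: that combinatorics is precisely what an interior saddle changes. To repair this case you would have to run the Frobenius-algebra tracking argument of Lemma~\ref{lem:CK-ideal} and \Proposition{CK-cube-maps-descend} for every cobordism summand (when the type III circle is destroyed, the split labels a platform-meeting circle by $X$, and that $X$ must then be tracked through the remaining merges, splits, handles and deaths of the cobordism); you would also need the explicit matrices of local cobordisms realizing $f,g,h,h'$ for R1 and R3, which is more input than the bare statement of \PartCont{KT-prop-1}.

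For comparison, the paper avoids descending any homotopy equivalence at all. It redoes the invariance argument at the level of sub/quotient complexes: the acyclic pieces $C_1,C_3$ in the standard proof are cut out by conditions on the labels of \emph{closed circles of the resolutions of $\Tan$ only} (not of $a\ou{\Tan}\Wmirror{b}$), hence are sub-bimodules compatible with $\CKTfuncIdealB{\Tan}{\bk}{\bk}$, and they remain acyclic after quotienting because the type II and type III arcs at the two vertices paired by each cancellation correspond (each cancellation merges a circle labeled $1$ or splits off a circle labeled $X$, neither of which changes the boundary connectivity or the other labels). This sidesteps exactly the verification your proposal struggles with, and it is also the form of the argument that lifts to the spectral setting via insular subfunctors in Theorem~\ref{thm:invariance}.
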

\begin{proof}
  By Lemma~\ref{lem:restrict-invt}, it suffices to prove the result
  when $\bh=\bk$. We will spell out the proof for a Reidemeister II
  move; the cases of Reidemeister I moves and braid-like Reidemeister
  III moves (see, e.g.,~\cite[\S7.3]{Baldwin-hf-s-seq}
  or~\cite[\S6]{RS-khovanov}) are similar, and we comment on
  them briefly at the end of the proof.

  \begin{figure}
    \centering
    \begin{tikzpicture}
      \tikzset{knot/.style={solid,thick}}
      \tikzset{resols/.style={inner sep=0pt,outer sep=2pt}}
      
      \foreach \a [count=\i from 0] in {a,b,c,d,e}{
        
        \begin{scope}[xshift=\i*2.8cm,xscale=1.3, yscale=2]
          
          \node at (0.5,-0.6) {(\a)};
          
          \ifthenelse{\i=0\OR\i=2\OR\i=3}{
            \node[resols] (a00) at (0,0) {\begin{tikzpicture}[scale=0.4]
                \draw[knot] (-0.75,-1.75) -- (-0.5,-1.5) to[out=45,in=-45] (-0.5,-0.5) to[out=135,in=-135] (-0.5,0.5) to[out=45,in=135] (0.5,0.5) to[out=-45,in=45] (0.5,-0.5) to[out=-135,in=135] (0.5,-1.5) -- (0.75,-1.75);
                \draw[knot] (-0.75,1.75) -- (-0.5,1.5) to[out=-45,in=-135] (0.5,1.5) -- (0.75,1.75);
              \end{tikzpicture}};}{}
          
          \ifthenelse{\i=0\OR\i=1\OR\i=2\OR\i=3}{
            \node[resols] (a10) at (1,0) {\begin{tikzpicture}[scale=0.4]
                \draw[knot] (-0.75,-1.75) -- (-0.5,-1.5) to[out=45,in=135] (0.5,-1.5) -- (0.75,-1.75);
                \draw[knot] (-0.5,-0.5) to[out=135,in=-135] (-0.5,0.5) to[out=45,in=135] (0.5,0.5) to[out=-45,in=45] (0.5,-0.5) to[out=-135,in=-45] (-0.5,-0.5);
                \draw[knot] (-0.75,1.75) -- (-0.5,1.5) to[out=-45,in=-135] (0.5,1.5) -- (0.75,1.75);
                
                \ifthenelse{\i=1}{
                  \node at (0,0) {$1$};}{}
                \ifthenelse{\i=2\OR\i=3}{
                  \node at (0,0) {$X$};}{}
              \end{tikzpicture}};}{}
          
          \ifthenelse{\i=0\OR\i=2\OR\i=4}{
            \node[resols] (a01) at (0,1) {\begin{tikzpicture}[scale=0.4]
                \draw[knot] (-0.75,-1.75) -- (-0.5,-1.5) to[out=45,in=-45] (-0.5,-0.5) to[out=135,in=-135] (-0.5,0.5) to[out=45,in=-45] (-0.5,1.5) --(-0.75,1.75);
                \draw[knot] (0.75,-1.75) -- (0.5,-1.5) to[out=135,in=-135] (0.5,-0.5) to[out=45,in=-45] (0.5,0.5) to[out=135,in=-135] (0.5,1.5) --(0.75,1.75);
              \end{tikzpicture}};}{}
          
          \ifthenelse{\i=0\OR\i=1}{
            \node[resols] (a11) at (1,1) {\begin{tikzpicture}[scale=0.4]
                \draw[knot] (-0.75,1.75) -- (-0.5,1.5) to[out=-45,in=45] (-0.5,0.5) to[out=-135,in=135] (-0.5,-0.5) to[out=-45,in=-135] (0.5,-0.5) to[out=45,in=-45] (0.5,0.5) to[out=135,in=-135] (0.5,1.5) -- (0.75,1.75);
                \draw[knot] (-0.75,-1.75) -- (-0.5,-1.5) to[out=45,in=135] (0.5,-1.5) -- (0.75,-1.75);
                
              \end{tikzpicture}};}{}
          
          \ifthenelse{\i=0\OR\i=2}{
            \draw[->] (a00) -- (a01);}{}
          \ifthenelse{\i=0\OR\i=2\OR\i=3}{
            \draw[->] (a00) -- (a10);}{}
          \ifthenelse{\i=0}{
            \draw[->] (a01) -- (a11);}{}
          \ifthenelse{\i=0\OR\i=1}{
            \draw[->] (a10) -- (a11);}{}
          
        \end{scope}}
      
    \end{tikzpicture}
    \caption{\textbf{Invariance under Reidemeister II moves.} (a)
      $\KTfunc{\ou{\Tan}}(a,b)$, the square of resolutions at a
      Reidemeister II move. (b) The acyclic subcomplex $C_1(a,b)$. (c)
      The quotient complex $C_2(a,b)$.  (d) The acyclic quotient
      complex $C_3(a,b)$ of $C_2(a,b)$. (e) The corresponding
      subcomplex $C_4(a,b)\cong\KTfunc{\ou{\Tan'}}(a,b)$.}
    \label{fig:Reidemeister-II}
  \end{figure}
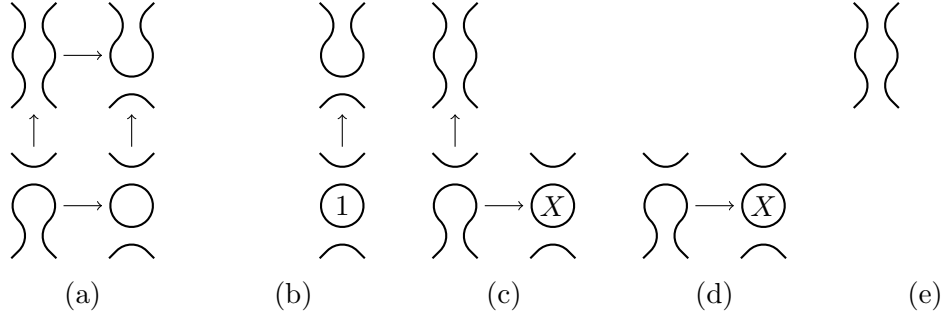
  
  Suppose $\Tan$ and $\Tan'$ are related by a Reidemeister II move,
  and $\Tan$ has two more crossings than $\Tan'$. Given
  $a\in\rCrossinglessB{m}{\bk}$, $b\in\rCrossinglessB{n}{\bk}$,
  Figure~\ref{fig:Reidemeister-II}~(a) shows the complex
  $\KTfunc{\ou{\Tan}}(a,b)$, where the two crossings involved in the
  Reidemeister II move are resolved in the four possible
  ways. Figure~\ref{fig:Reidemeister-II}~(b) shows an acyclic
  subcomplex $C_1(a,b)$ consisting of two of the resolutions, and where
  one closed circle is labeled $1$. The quotient complex
  $C_2(a,b)=\KTfunc{\ou{\Tan}}(a,b)/C_1(a,b)$ (shown in
  Figure~\ref{fig:Reidemeister-II}~(c)) has a subcomplex $C_4(a,b)$
  (shown in Figure~\ref{fig:Reidemeister-II}~(e)), so that
  $C_3(a,b)=C_2(a,b)/C_4(a,b)$ (shown in
  Figure~\ref{fig:Reidemeister-II}~(d)) is acyclic, and $C_4(a,b)\cong
  \KTfunc{\ou{\Tan'}}(a,b)$.

  Since each $C_i(a,b)$ is defined by restricting to certain vertices
  of the cube and certain labels of the closed circles in the
  resolutions of $\Tan$ (not $a\ou{\Tan}\Wmirror{b}$), each $C_i$
  restricts to an
  $(\CKTalgTB{m}{\bk},\CKTalgTB{n}{\bk})$-bimodule. The inclusion maps
  $C_1(a,b)\into \KTfunc{\ou{\Tan}}(a,b)$ and $C_4(a,b)\into
  C_2(a,b)$, and the identification $C_4(a,b)\cong
  \KTfunc{\ou{\Tan'}}(a,b)$ respect the labels of type II and III
  circles. Hence, if we let
  $C'_i(a,b)=C_i(a,b)/\CKTfuncIdealB{\Tan}{\bk}{\bk}$ then there are
  induced maps $C'_1\into \CKTfuncB{\Tan}{\bk}{\bk}$, $C'_4\into
  C'_2$, and $C'_4\cong \CKTfuncB{\Tan'}{\bk}{\bk}$.

  It remains to verify that $C'_1$ and $C'_3$ are acyclic. Let
  $\Tan_v$ and $\Tan_w$ be resolutions of $\Tan$ which agree at all of
  the crossings not involved in the Reidemeister II move and have the
  two forms allowed in $C_1$ near the Reidemeister II move. Notice
  that $a\ou{\Tan_v}\Wmirror{b}$ has a type III arc if and only if
  $a\ou{\Tan_w}\Wmirror{b}$ does, and the type II arcs of
  $a\ou{\Tan_v}\Wmirror{b}$ and $a\ou{\Tan_w}\Wmirror{b}$
  correspond. It follows that $C'_1$ is acyclic. Similarly, if
  $\Tan_v$ and $\Tan_w$ are resolutions as in $C_3$ which agree away
  from the Reidemeister II move, the type II (respectively III) arcs
  of $\Tan_v$ correspond to the type II (respectively III) arcs of
  $\Tan_w$, so it follows that $C'_3$ is acyclic. This completes the
  proof for Reidemeister II moves.

  As Chen-Khovanov note, there are two key points to this
  argument. First, the sub-complexes involved are defined locally near
  the Reidemeister move by placing restrictions on the labels of
  closed circles only, hence descend to the Chen-Khovanov quotient.
  Second, the acyclic subcomplexes stay acyclic after one quotients by
  the Chen-Khovanov submodule, essentially because type II and III
  arcs at the different relevant vertices correspond. Inspecting the
  proofs of Reidemeister I and III invariance (see,
  e.g.,~\cite[\S6]{RS-khovanov}), both properties hold for them as well.
  Indeed, in all cases
  there are two kinds of cancellations that occur: merging on a circle
  labeled $1$ or splitting off a circle labeled $X$. Neither operation
  changes the labels of the other components nor which points in the
  boundary of the tangle lie on the same strand, so each of the sub-
  and quotient complexes remains acyclic after quotienting by the
  Chen-Khovanov submodule. This completes the proof.
\end{proof}

The gluing theorem for the tangle invariants only holds in
Chen-Khovanov's generality:
\begin{theorem}\cite{CK-kh-tangle}\label{thm:CK-pairing}
  Let $\Tan_1$ be an $(m,n)$-tangle and $\Tan_2$ an
  $(n,p)$-tangle. Fix integers $h_1,h_2,k_1,k_2,\ell_1,\ell_2$
  satisfying
  \[
    h_1-h_2=k_1-k_2=\ell_1-\ell_2
  \]
  and
  \[
    h_1+h_2\geq m,\ k_1+k_2\geq n,\ \ell_1+\ell_2\geq p.
  \]
  Then,
  \[
    \CKTfuncB{\Tan_1\Tan_2}{\bh}{\bl}\simeq \CKTfuncB{\Tan_1}{\bh}{\bk}\DTP_{\CKTalgB{n}{\bk}}\CKTfuncB{\Tan_2}{\bk}{\bl}.
  \]
\end{theorem}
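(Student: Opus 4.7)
The plan is to reduce to Khovanov's gluing theorem for the arc-algebra tangle invariants and then argue that the Chen-Khovanov quotients are compatible with this gluing.

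First, I would exploit the balance conditions $h_1 - h_2 = k_1 - k_2 = \ell_1 - \ell_2$ to observe that any two of $\bh$, $\bk$, $\bl$ are coordinate-wise comparable. Setting $\bk^* = \max(\bh, \bk, \bl)$ (which is one of the three), the hypotheses $h_1+h_2 \geq m$, $k_1+k_2 \geq n$, $\ell_1+\ell_2 \geq p$ combined with \Lemma{iota-func-iso} ensure that the maps $\imath$ among the platform algebras are isomorphisms, and \Lemma{restrict-invt} identifies the bimodules compatibly under these isomorphisms. This reduces the statement to the case $\bh = \bk = \bl = \bk^*$.

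With this uniform cap, the closures $\ou{\Tan_1}$ and $\ou{\Tan_2}$ have matching numbers of middle endpoints and compose honestly to $\ou{\Tan_1 \Tan_2} = \ou{\Tan_1} \cdot \ou{\Tan_2}$. Khovanov's gluing theorem for arc-algebra invariants then gives
\[
\KTfunc{\ou{\Tan_1 \Tan_2}} \simeq \KTfunc{\ou{\Tan_1}} \otimes^{\mathbb{L}}_{\KTalg{n+k_1^*+k_2^*}} \KTfunc{\ou{\Tan_2}}.
\]
The core task is to descend this equivalence to the Chen-Khovanov quotients. There are two things to verify. First, in any bar-style resolution computing the derived tensor product, contributions from a middle matching $c \notin \rCrossinglessB{n}{\bk^*}$ must vanish after quotienting: an internal matching of $c$ among its first $k_1^*$ or last $k_2^*$ points forces type III arcs in both $a \ou{\Tan_1} \Wmirror{c}$ and $c \ou{\Tan_2} \Wmirror{b}$, so every generator on each side lies in the Chen-Khovanov ideal. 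Second, one must match the image of $\CKTfuncIdealB{\Tan_1}{\bk^*}{\bk^*} \otimes \KTfunc{\ou{\Tan_2}} + \KTfunc{\ou{\Tan_1}} \otimes \CKTfuncIdealB{\Tan_2}{\bk^*}{\bk^*}$, under the gluing identification, with $\CKTfuncIdealB{\Tan_1 \Tan_2}{\bk^*}{\bk^*}$. This relies on the fact that the platforms of the partial closure of $\Tan_1 \Tan_2$ split cleanly between those coming from $\Tan_1$'s left side and $\Tan_2$'s right side, so any type II or III arc (and the $X$-label responsible for it) is localized to one factor.

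The main obstacle is this last ideal-identification: one must show, via a bigon-counting argument in the spirit of \Lemma{type-III}, that a type III arc or $X$-labeled type II arc in $a \Tan_1 \Tan_2 \Wmirror{b}$ always arises from a corresponding configuration in either $a \Tan_1 \Wmirror{c}$ or $c \Tan_2 \Wmirror{b}$ for every middle matching $c$, and conversely, that no spurious contributions appear on the right-hand side. Once this ideal decomposition is in hand, the universal property of quotients applied to both sides of Khovanov's gluing equivalence yields the theorem.
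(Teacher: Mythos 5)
Your preparatory steps are sound and in fact parallel the paper: the reduction to $\bh=\bk=\bl$ via \Lemma{iota-func-iso} (and \Lemma{restrict-invt}), the observation that middle matchings $c\notin\rCrossinglessB{n}{\bk}$ contribute nothing after quotienting, and the compatibility of the gluing cobordism map with the Chen--Khovanov submodules (this is the paper's \Lemma{glue-is-chain-map}). The genuine gap is your final step. A quasi-isomorphism that is compatible with subcomplexes does \emph{not} descend to a quasi-isomorphism of the quotients; ``the universal property of quotients'' only produces the induced chain map, and to conclude it is a quasi-isomorphism you would also need, say, that Khovanov's gluing map restricts to a quasi-isomorphism between the subcomplexes (ideal-sum to $\CKTfuncIdealB{\Tan_1\Tan_2}{\bk}{\bk}$) and then a five-lemma argument --- a statement you neither formulate nor prove, and which is essentially as hard as the theorem itself. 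Nor is the formal algebra of killing an ideal available as a shortcut, because $\CKTfuncIdealB{\Tan_1}{\bk}{\bk}$ is strictly larger than $\CKTidealB{m}{\bk}\cdot\KTfunc{\ou{\Tan_1}}+\KTfunc{\ou{\Tan_1}}\cdot\CKTidealB{n}{\bk}$: it also records type III arcs and $X$-labelled type II arcs created at the outer platforms of the tangle itself, so the quotient is not obtained by base change along $\CKTalgTB{n}{\bk}\to\CKTalgB{n}{\bk}$ alone. In short, descending the arc-algebra gluing equivalence is exactly where the content of \Theorem{CK-pairing} lies, and your outline does not supply it.

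The paper does not descend from the arc-algebra statement; it proves the quotient-level statement directly, and the structural inputs it uses are what your proposal is missing. After the same reduction to $\bh=\bk=\bl$, \Lemma{sweet} shows each $\CKTfuncB{\Tan_i}{\bk}{\bk}$ is projective as a one-sided module over the platform algebra, which replaces $\DTP_{\CKTalgB{n}{\bk}}$ by the ordinary tensor product (your argument never addresses projectivity or flatness at all). Then \Lemma{crT} identifies $a\ou{\Tan_{1,v}}$ with a restricted matching $a'$ together with some unknots, giving $\CKTfuncB{\Tan_{1,v}}{\bk}{\bk}(a,\cdot)\cong V^{\otimes q}\otimes\bigl(a'\cdot\CKTalgB{n}{\bk}\bigr)$; this computes both sides of the gluing map vertex-by-vertex in the cube and shows they are abstractly isomorphic free abelian groups. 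Finally, the induced map on quotients is shown to be \emph{surjective} by exhibiting explicit preimages (label the closed components as in the target, and label every arc of the intermediate matching $c'$ by $1$), whence it is an isomorphism. Any repair of your approach would need substitutes for these ingredients --- in particular an honest argument on the quotients themselves rather than an appeal to descent of a quasi-isomorphism.
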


As Chen-Khovanov note, the proof of Theorem~\ref{thm:CK-pairing} is essentially
the same as the arc algebra case~\cite[Proposition 13]{Kho-kh-tangles}.  For the
spectral refinements, we need an explicit description of the homotopy
equivalence in Theorem~\ref{thm:CK-pairing}, so as usual we give a few more
details.

Let $\Tan_1$ be an $(m,n)$-tangle and $\Tan_2$ an $(n,p)$-tangle. By
Lemma~\ref{lem:iota-func-iso}, Theorem~\ref{thm:CK-pairing} is equivalent to the
same statement with $(h_1,h_2)$ replaced by $(h_1+1,h_2+1)$ (or $\bk$, $\bl$
increased similarly). Thus, for the rest of the section, we will assume that
\begin{align*}
  h_1&=k_1=\ell_1\\
   \shortintertext{so also}
  h_2&=k_2=\ell_2.
\end{align*}

Given
$a\in \rCrossinglessB{m}{\bk}$,
$b\in \rCrossinglessB{n}{\bk}$, and
$c\in \rCrossinglessB{p}{\bk}$,
the canonical cobordism from $\Wmirror{b}\amalg b$ to the
identity braid induces a map
\begin{equation}
  \wt{\Psi}(a,b,c)\co \KTfunc{\ou{\Tan_1}}(a,b)\otimes_\ZZ\KTfunc{\ou{\Tan_2}}(b,c)
  \to \KTfunc{\ou{\Tan_1\Tan_2}}(a,c).
\end{equation}

\begin{lemma}\label{lem:glue-is-chain-map}
  The gluing map $\wt{\Psi}$ is a chain map of bimodules. Further, for
  $a \in \rCrossinglessB{m}{\bk}$,
  $b\in \rCrossinglessB{n}{\bk}$, and
  $c\in \rCrossinglessB{p}{\bk}$, $\Phi$ takes the
  submodule
  \[
    \CKTfuncIdealB{\Tan_1}{\bk}{\bk}(a,b)\otimes\KTfunc{\ou{\Tan_2}}(b,c)+
    \KTfunc{\ou{\Tan_1}}(a,b)\otimes \CKTfuncIdealB{\Tan_2}{\bk}{\bk}(b,c)
    \subset \KTfunc{\ou{\Tan_1}}(a,b)\otimes\KTfunc{\ou{\Tan_2}}(b,c)
  \]
  to the submodule
  \[
    \CKTfuncIdealB{\Tan_1\Tan_2}{\bk}{\bk}(a,c)\subset \KTfunc{\ou{\Tan_1\Tan_2}}(a,c)
  \]
  and hence induces a homomorphism
  \begin{equation}
    \label{eq:glueMap}
    \overline{\Psi}\co
    \CKTfuncB{\Tan_1}{\bk}{\bk}\otimes_\ZZ\CKTfuncB{\Tan_2}{\bk}{\bk}\to
    \CKTfuncB{\Tan_1\Tan_2}{\bk}{\bk}.
  \end{equation}
\end{lemma}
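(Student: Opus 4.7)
My plan is to establish the three claims in order; the chain-map and bimodule-map properties are essentially Khovanov's classical gluing theorem, while the ideal claim is the real content and requires adapting Chen-Khovanov's algebra-case argument (\Lemma{CK-ideal}) to the module setting.

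By construction $\wt{\Psi}$ is the standard Khovanov gluing map applied to the capped tangles $\ou{\Tan_1}$ and $\ou{\Tan_2}$ (note that $\ou{\Tan_1}\,\ou{\Tan_2}=\ou{\Tan_1\Tan_2}$, since their horizontal strands concatenate). Both the tangle differentials and $\wt{\Psi}$ are realized by saddle cobordisms through the Frobenius structure on $V$, and the canonical cobordism implementing $\wt{\Psi}$ is spatially disjoint from the saddles implementing the differentials and those implementing the outer bimodule actions, so everything commutes by the TQFT formalism. Hence the chain-map and bimodule-map assertions reduce to Khovanov's gluing theorem~\cite[Prop.~13]{Kho-kh-tangles}; there is nothing new to do there.

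For the ideal claim, the central difficulty is that the \emph{middle platforms}---the right platforms of $\flatclose{a\Tan_1\Wmirror{b}}$ and the left platforms of $\flatclose{b\Tan_2\Wmirror{c}}$---disappear in $\flatclose{a\Tan_1\Tan_2\Wmirror{c}}$, so ``badness'' of $x$ or $y$ witnessed only on a middle platform could seem to evaporate. The key observation that resolves this is that each horizontal strand of $\ou{\Tan_1}$ pairs a middle-platform point with the corresponding outer-platform point along the same strand, hence on the same circle of $\flatclose{a\Tan_1\Wmirror{b}}$ and with the same $V$-label; the analogous statement holds for $\ou{\Tan_2}$. Therefore any circle meeting a middle platform also meets the paired outer platform, and we may assume that the type III arc (resp.\ $X$-labeled type II arc) witnessing $x\in\CKTfuncIdealB{\Tan_1}{\bk}{\bk}(a,b)$ lives on an outer (left) platform, and similarly for $y\in\CKTfuncIdealB{\Tan_2}{\bk}{\bk}(b,c)$ on an outer (right) platform. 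These outer platforms persist as the platforms of $\flatclose{a\Tan_1\Tan_2\Wmirror{c}}$.

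With that reduction, I would run the cobordism $\Wmirror{b}b\to\Id$ saddle by saddle and use two consequences of the Frobenius structure on $V$: (i) if a marked point $P$ lies on an $X$-labeled circle, then after any saddle it still does so in every nonzero term, because merges either vanish via $X\cdot X=0$ or retain the $X$-label and splits of an $X$-circle give $X\otimes X$; (ii) if $P,Q$ were on a common circle and a split separates them, then $1\mapsto 1\otimes X+X\otimes 1$ and $X\mapsto X\otimes X$ ensure at least one of the two resulting circles is $X$-labeled. In the type III case for $x$, pick outer-platform points $p,p'$ on the bad circle: in each nonzero term of $\wt{\Psi}(x\otimes y)$ they either stay on a common circle (type III in $\flatclose{a\Tan_1\Tan_2\Wmirror{c}}$) or get separated by a split whose $X$-labeled side, by (i), persists to the end as a type-II-or-III outer-platform circle labeled $X$. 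In the $X$-labeled type II case, (i) directly keeps the outer-platform point on an $X$-labeled circle throughout. The case for $y$ is symmetric. This shows $\wt{\Psi}$ carries the combined submodule into $\CKTfuncIdealB{\Tan_1\Tan_2}{\bk}{\bk}(a,c)$ and hence induces $\overline{\Psi}$. The one substantive difficulty is precisely the middle-platform issue; once bypassed by the horizontal-strand identification, the computation is a direct module-case analogue of \Lemma{CK-ideal}.
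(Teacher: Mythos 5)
Your proposal is correct and follows essentially the same route as the paper: far-commutativity handles the chain-map and bimodule claims, and the ideal claim is proved by tracking platform points through the canonical saddle cobordism, using the persistence of $X$-labels on circles through a marked point together with the dichotomy ``stay on one circle (type III via \Lemma{type-III}) versus split (producing a persistent $X$-labeled circle through a platform).'' The only difference is presentational: you make explicit the reduction of middle-platform badness to the outer platforms via the horizontal strands, a step the paper uses implicitly (e.g.\ when it applies \Lemma{type-III} to the endpoints $P,Q$ of a type III arc on the middle platform).
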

\begin{proof}
  It is immediate from far-commutativity that the gluing map is a
  chain map of bimodules (see also~\cite{Kho-kh-tangles}). It remains
  to check that its restriction preserves the submodules. This is
  clear for elements coming from type III arcs on the left of
  $T_1$ or the right of
  $T_2$. Next, suppose we are considering a generator of
  $\KTfunc{\ou{\Tan_1}}(a,b)$ in which some type II circle is
  labeled $X$. Let
  $P$ be any intersection of that circle with a platform for
  $a$. Then the cobordism map takes this generator to one where the
  circle containing $P$ is labeled
  $X$, so there is a type II circle labeled $X$ as desired.

  Next, suppose there is a type III arc in $a\Tan_1\Wmirror{b}$ to the
  right of $T_1$, and let $P,Q$ be its endpoints. If $P$ and
  $Q$ are on the same circle of $a\ou{\Tan_1\Tan_2}\Wmirror{c}$ then
  by Lemma~\ref{lem:type-III}, $a\Tan_1\Tan_2\Wmirror{c}$ has a type
  III arc and we are done. Otherwise, at some point in the canonical
  cobordism, the circle containing $P$ and $Q$ splits. After the
  split, one of the components is a circle labeled $X$ passing through
  a platform, implying that the corresponding generator for
  $a\ou{\Tan_1\Tan_2}\Wmirror{c}$ has a type II (or perhaps III) circle
  labeled $X$.
\end{proof}

\begin{lemma}
  The map $\wt{\Psi}$ descends to the tensor product
  \[
    \KTfunc{\ou{\Tan_1}}\otimes_{\CKTalgTB{n}{\bk}}\KTfunc{\ou{\Tan_2}}
    \to\KTfunc{\ou{\Tan_1\Tan_2}}
  \]
  and hence $\overline{\Psi}$ descends to a map 
  \[
    \Psi\co
    \CKTfuncB{\Tan_1}{\bk}{\bk}\otimes_{\CKTalgB{n}{\bk}}\CKTfuncB{\Tan_2}{\bk}{\bk}\to
    \CKTfuncB{\Tan_1\Tan_2}{\bk}{\bk}.
  \]
\end{lemma}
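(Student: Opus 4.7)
The plan is to establish two balancing conditions in sequence: first that $\wt\Psi$ is balanced over $\CKTalgTB{n}{\bk}$, and then to push this through the quotient by $\CKTidealB{n}{\bk}$ to obtain the required balancing of $\overline\Psi$ over $\CKTalgB{n}{\bk}$.

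For the first step, recall that $\CKTalgTB{n}{\bk}$ is defined as the full subcategory of the Khovanov arc algebra $\KTalg{n+k_1+k_2}$ on the objects $\rCrossinglessB{n}{\bk}$, so for any $b,b' \in \rCrossinglessB{n}{\bk}$ we have $\CKTalgTB{n}{\bk}(b,b') = \KTalg{n+k_1+k_2}(b,b')$. Consequently the desired balancing identity
\[
\wt\Psi(x\cdot \alpha, y) = \wt\Psi(x, \alpha \cdot y),
\]
for $x \in \KTfunc{\ou{\Tan_1}}(a,b)$, $\alpha \in \CKTalgTB{n}{\bk}(b,b')$, and $y \in \KTfunc{\ou{\Tan_2}}(b',c)$, is a restriction of Khovanov's gluing statement for arc algebra bimodules \cite[Proposition 13]{Kho-kh-tangles}. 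That identity is itself a consequence of the fact that the two iterated canonical cobordisms---contracting $b \amalg \Wmirror{b}$ before versus after contracting $b' \amalg \Wmirror{b'}$---are diffeomorphic rel boundary (indeed, both are isotopic to the single joint contraction), hence induce the same morphism under the TQFT associated to $V$. Thus $\wt\Psi$ descends to $\KTfunc{\ou{\Tan_1}} \otimes_{\CKTalgTB{n}{\bk}} \KTfunc{\ou{\Tan_2}}$, proving the first claim.

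For the second step, let $\bar\alpha \in \CKTalgB{n}{\bk}(b,b')$ and choose any lift $\alpha \in \CKTalgTB{n}{\bk}(b,b')$. By \Proposition{CK-Ideal}, the actions $\bar x \cdot \bar\alpha \defeq \overline{x\cdot \alpha}$ and $\bar\alpha \cdot \bar y \defeq \overline{\alpha \cdot y}$ on Chen-Khovanov bimodules are well-defined, independent of the choice of lift. Reducing the balancing identity of the first step modulo the ideals $\CKTfuncIdealB{\Tan_1}{\bk}{\bk}$ and $\CKTfuncIdealB{\Tan_2}{\bk}{\bk}$ on the inputs and $\CKTfuncIdealB{\Tan_1\Tan_2}{\bk}{\bk}$ on the output (the latter via \Lemma{glue-is-chain-map}) gives
\[
\overline\Psi(\bar x \cdot \bar\alpha \otimes \bar y) = \overline\Psi(\bar x \otimes \bar\alpha \cdot \bar y),
\]
which is precisely the balancing condition needed to factor $\overline\Psi$ through $\CKTfuncB{\Tan_1}{\bk}{\bk} \otimes_{\CKTalgB{n}{\bk}} \CKTfuncB{\Tan_2}{\bk}{\bk}$.

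I do not anticipate any substantive obstacle. The first step is a direct restriction of Khovanov's balancing to the full subcategory $\CKTalgTB{n}{\bk}$; the only bookkeeping point is that the categorical tensor product over $\CKTalgTB{n}{\bk}$ sums only over $b \in \rCrossinglessB{n}{\bk}$ rather than over all crossingless matchings of $n+k_1+k_2$ points, which is exactly consistent with where $\wt\Psi$ was defined. The second step is a routine descent along a quotient whose compatibility with the bimodule structure has already been established in \Proposition{CK-Ideal}.
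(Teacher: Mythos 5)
Your proof is correct and follows essentially the same route as the paper: the balancing over $\CKTalgTB{n}{\bk}$ comes from the fact that the algebra action is itself induced by the canonical cobordism $\Wmirror{b}\amalg b\to\Id$ together with far-commutativity (your observation that the two orders of contraction are isotopic to the joint contraction), and the descent to the quotients is the routine consequence of Lemma~\ref{lem:glue-is-chain-map} and Proposition~\ref{prop:CK-Ideal} that the paper leaves implicit. No gaps.
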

\begin{proof}
  This follows from far-commutativity of the cobordism maps and the
  fact that the action of $\CKTalgTB{n}{\bk}$ is itself induced by the
  canonical cobordism $\Wmirror{b}\amalg b\to \Id~$\cite[Proof of
  Theorem 1]{Kho-kh-tangles}.
\end{proof}

\begin{lemma}\label{lem:crT}
  Suppose $\Tan_1$ is a flat tangle and $a\in\rCrossinglessB{m}{\bk}$
  is a crossingless matching so that $a\Tan_1$ has no type III arcs. Then
  $a\ou{\Tan_1}$ is (isotopic rel endpoints to) the union of an element
  $a'\in \rCrossinglessB{n}{\bk}$ and a collection of unknots. 
\end{lemma}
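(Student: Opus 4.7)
The plan is to exploit the planarity of $a\ou{\Tan_1}$. Since $a$ is a crossingless matching and $\Tan_1$ is flat, the whole configuration $a\ou{\Tan_1}$ is a compact 1-submanifold of $\RR^2$, properly embedded relative to the right boundary of $\ou{\Tan_1}$. Its connected components therefore come in two types: arcs with both endpoints on the right boundary of $\ou{\Tan_1}$, and closed components. Planarity forces each closed component to bound a disk in the plane, so each is an unknot, which accounts for the ``collection of unknots'' in the conclusion.

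The arcs of $a\ou{\Tan_1}$ pair up the $n+k_1+k_2$ right endpoints, and because these arcs are disjointly embedded in a half-plane, the Jordan curve theorem shows the induced pairing is a crossingless matching. A standard ambient isotopy of $\RR^2$ fixing the right endpoints (first pushing off the closed components, then straightening each arc into a nested semicircle) realizes $a\ou{\Tan_1}$, up to isotopy rel endpoints, as the disjoint union of some element $a' \in \Crossingless{n+k_1+k_2}$ together with unknots. The remaining task is to check that $a'$ actually lies in $\rCrossinglessB{n}{\bk}$, i.e., that no arc of $a'$ joins two of the first $k_1$ right endpoints and no arc joins two of the last $k_2$ right endpoints.

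I would verify this by contrapositive. Suppose $a'$ has an arc joining two of the first $k_1$ right endpoints. By construction of $\ou{\Tan_1}$, these two endpoints are precisely the right ends of two of the $k_1$ bottom horizontal strands. Tracing the corresponding component of $a\ou{\Tan_1}$ back along those two horizontal strands, through $a$ (and possibly through $\Tan_1$), and then erasing the horizontal strands, produces an arc of the partial closure $a\Tan_1$ whose two endpoints both lie on the bottom-left platform, i.e., a type III arc. This contradicts the hypothesis. The symmetric argument using top horizontal strands and the top-left platform handles the last $k_2$ endpoints.

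The main obstacle is the bookkeeping in the last step, namely matching the language of ``arcs with both endpoints on the same platform'' in the partial closure $a\Tan_1$ with the language of forbidden matchings among the right endpoints of $a\ou{\Tan_1}$, via the horizontal strands that connect them; once this dictionary is set up carefully, the geometric content is essentially forced by planarity together with the definition of $\ou{\Tan_1}$.
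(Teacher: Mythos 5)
Your argument is correct and is essentially the paper's own proof: the paper likewise invokes that any flat $(0,n+k_1+k_2)$-tangle is a disjoint union of a crossingless matching and unknots, and then rules out forbidden matchings by observing that two right endpoints on the same side being matched would, after tracing back through the horizontal strands, yield a type III arc of $a\Tan_1$, contradicting the hypothesis. You simply spell out the planarity and bookkeeping details that the paper leaves implicit.
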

\begin{proof}
  Since any flat $(0,n+k_1+k_2)$-tangle is the disjoint union of a crossingless
  matching and some unknots, all that remains is to verify that none of the
  points on the same platform are connected by the matching; but if two points
  on the same platform were connected then the corresponding arc of $a\Tan_1$
  would be a type III arc.
\end{proof}

\begin{lemma}\label{lem:sweet}
  For $\Tan_1$ an $(m,n)$-tangle,
  $\CKTfuncB{\Tan_1}{\bk}{\bk}$ is a projective left module over
  $\CKTalgB{m}{\bk}$ and is a projective right module over
  $\CKTalgB{n}{\bk}$.
\end{lemma}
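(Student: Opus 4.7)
The plan is to prove projectivity by a direct sum decomposition, reducing first to the flat tangle case and then exhibiting an explicit decomposition. For a general tangle $\Tan_1$ with $N$ crossings, $\CKTfuncB{\Tan_1}{\bk}{\bk}$ is the total complex of a $\CubeCat{N}$-shaped diagram of $(\CKTalgB{m}{\bk},\CKTalgB{n}{\bk})$-bimodules whose vertices are the $\CKTfuncB{\Tan_{1,v}}{\bk}{\bk}$ for the flat resolutions $\Tan_{1,v}$. Since projective bimodules are closed under direct sums, and a bounded complex of projectives is $K$-projective (equivalently, projective in each homological degree), it suffices to prove the proposition for flat tangles.

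Fix a flat tangle $\Tan_1$ and a matching $b \in \rCrossinglessB{n}{\bk}$; I will establish left-projectivity of $\CKTfuncB{\Tan_1}{\bk}{\bk}(-, b)$ over $\CKTalgB{m}{\bk}$. Applying the right-sided mirror of \Lemma{crT} to $\Tan_1\Wmirror{b}$ gives two cases. In the first, $\Tan_1\Wmirror{b}$ contains an arc with both endpoints on a single left-hand platform; then for every $a$ the corresponding arc lies on a type~III circle in $a\ou{\Tan_1}\Wmirror{b}$ by \Lemma{type-III}, forcing $\CKTfuncIdealB{\Tan_1}{\bk}{\bk}(a,b) = \KTfunc{\ou{\Tan_1}}(a, b)$ and hence $\CKTfuncB{\Tan_1}{\bk}{\bk}(-, b) = 0$. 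In the second case, $\ou{\Tan_1}\Wmirror{b}$ is isotopic rel boundary to $\Wmirror{b'} \sqcup U_b$, where $b' \in \rCrossinglessB{m}{\bk}$ and $U_b$ is a disjoint union of closed circles, none of which meets any platform.

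In the non-vanishing case, gluing any $a$ gives $a\ou{\Tan_1}\Wmirror{b} = a\Wmirror{b'} \sqcup U_b$. The circles of $U_b$ meet no platform, so by \Lemma{type-III} they contribute no type~II or~III circles. All type~II and~III circles of $a\ou{\Tan_1}\Wmirror{b}$ therefore arise from the $a\Wmirror{b'}$ factor, and the Chen-Khovanov ideal matches the product of the arc-algebra ideal with the full Frobenius factor of $U_b$. Quotienting yields an isomorphism of left $\CKTalgB{m}{\bk}$-modules
\[
  \CKTfuncB{\Tan_1}{\bk}{\bk}(a, b) \cong \CKTalgB{m}{\bk}(a, b') \otimes V(U_b),
\]
up to an overall quantum grading shift depending only on $\Tan_1$ and $b$. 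Since $V(U_b)$ is a free graded abelian group of finite rank $2^{|U_b|}$, this is a finite direct sum of grading-shifted copies of the representable projective $\CKTalgB{m}{\bk}(-, b')$, hence projective. Taking the direct sum over $b$ gives left-projectivity; right-projectivity is symmetric, using \Lemma{crT} itself rather than its mirror.

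The main point requiring care is the identification of the Chen-Khovanov submodule with the product ideal $\CKTidealB{m}{\bk}(a,b') \otimes V(U_b)$ on the nose, so that the decomposition of the ambient module descends cleanly to the quotient. This is where \Lemma{type-III} does the work: it guarantees that the circles crossing a platform in $a\ou{\Tan_1}\Wmirror{b}$ are exactly the circles of $a\Wmirror{b'}$ crossing a platform, so the geometric splitting is fully compatible with the Chen-Khovanov quotient.
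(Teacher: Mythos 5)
Your proposal is correct and takes essentially the same route as the paper's proof: reduce to flat tangles, split the module into summands indexed by the crossingless matching on one side, and use \Lemma{crT} (you use its mirror, fixing $b$ and proving left-projectivity, while the paper fixes $a$ and proves right-projectivity) to identify each summand with $V^{\otimes q}$ tensored with a representable, hence projective, module, with the type~II/III bookkeeping handled via \Lemma{type-III} just as the paper's ``immediate from the definitions'' step requires. The only quibble is a left/right slip: the type~III arcs of $\Tan_1\Wmirror{b}$ that force the summand to vanish have both endpoints on a single \emph{right-hand} platform (the ones adjacent to $\Wmirror{b}$), not a left-hand one.
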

That is, in Khovanov's language~\cite{Kho-kh-tangles},
$\CKTfuncB{\Tan_1}{\bk}{\bk}$ is a sweet bimodule. This is weaker than
$\CKTfuncB{\Tan_1}{\bk}{\bk}$ being bi-projective (projective as a
bimodule).
\begin{proof}[Proof of Lemma~\ref{lem:sweet}]
  It suffices to prove the result when $\Tan_1$ is a flat tangle.  
  We prove $\CKTfuncB{\Tan_1}{\bk}{\bk}$ is right projective. As a right
  module,
  \[
    \CKTfuncB{\Tan_1}{\bk}{\bk}=\bigoplus_{a\in\rCrossinglessB{m}{\bk}}\CKTfuncB{\Tan_1}{\bk}{\bk}(a,\cdot),
  \]
  so it suffices to prove each summand is right projective. Let $a'$ be the
  crossingless matching from Lemma~\ref{lem:crT}, and suppose that
  $a\ou{\Tan_1}$ has $q$ circle components.  Then, it is immediate from the
  definitions that
  \[
    \CKTfuncB{\Tan_1}{\bk}{\bk}(a,\cdot)=V^{\otimes q}\otimes (a'\cdot \CKTalgB{n}{\bk}).
  \]
  This implies the result.
\end{proof}

\begin{proof}[Proof of Theorem~\ref{thm:CK-pairing}]
  As noted above, by Lemma~\ref{lem:iota-func-iso}, we may assume $\bh=\bk=\bl$. 
  By Lemma~\ref{lem:sweet}, we can replace the derived tensor product
  with the ordinary tensor product. So, it suffices to prove that the
  gluing map $\Psi$ is a chain isomorphism.  By
  Lemma~\ref{lem:glue-is-chain-map}, we know $\Psi$ is a well-defined
  chain map, so it suffices to prove that for single vertices $v$, $w$
  for the cube of resolutions of $\Tan_1$, $\Tan_2$, and crossingless
  matchings $a$, $c$, the gluing map
  \begin{equation}\label{eq:Psi-is-iso-pf}
    \Psi\co \CKTfuncB{\Tan_{1,v}}{\bk}{\bk}(a,\cdot)\otimes_{\CKTalgB{n}{\bk}}\CKTfuncB{\Tan_{2,w}}{\bk}{\bk}(\cdot,c)\to
    \CKTfuncB{\Tan_{1,v}\Tan_{2,w}}{\bk}{\bk}(a,c)
  \end{equation}
  is an isomorphism of free abelian groups.

  If $a\ou{\Tan_{1,v}}$ or $\ou{\Tan_{2,w}}\Wmirror{c}$ has a type III arc then both
  sides vanish, so there is nothing to prove.
  
  In the remaining case, by Lemma~\ref{lem:sweet} and its proof, we have
  \[
    \CKTfuncB{\Tan_{1,v}}{\bk}{\bk}(a,\cdot)\otimes_{\CKTalgB{n}{\bk}}\CKTfuncB{\Tan_{2,w}}{\bk}{\bk}(\cdot,c)
    \cong
    V^{\otimes (q+q')}\otimes (a'\cdot \CKTalgB{n}{\bk})\otimes_{\CKTalgB{n}{\bk}}(\CKTalgB{n}{\bk}\cdot c'),
  \]
  where $a'$ is as in Lemma~\ref{lem:crT} and $c'$ is defined similarly. This is
  isomorphic to
  \[
    V^{\otimes (q+q')}\otimes (a'\cdot \CKTalgB{n}{\bk}\cdot c')=
    V^{\otimes (q+q')}\otimes \CKTalgB{n}{\bk}(a',c').
  \]
  This already proves that the two sides of Equation~\eqref{eq:Psi-is-iso-pf}
  are isomorphic.

  So, to see that $\Psi$ is an isomorphism, it suffices to verify that
  the map $\ol{\Psi}$ from Equation~\eqref{eq:glueMap} is
  surjective. Fix a generator
  $y\in\CKTfuncB{\Tan_{1,v}\Tan_{2,w}}{\bk}{\bk}(a,c)$. Consider
  generators $y_1\in \CKTfuncB{\Tan_{1,v}}{\bk}{\bk}(a,c')$ and
  $y_2\in \CKTfuncB{\Tan_{2,w}}{\bk}{\bk}(c',c)$, so that:
  \begin{itemize}[leftmargin=*]
  \item  $y_1$ and $y_2$ label the closed components of $a\ou{\Tan_{1,v}}$ and $\ou{\Tan_{2,w}}\Wmirror{c}$ the same way as $y$.
  \item $y_1$ labels each arc of $\Wmirror{c'}$ the same way as $y$ labels the corresponding arc of $\Wmirror{c}$.
  \item $y_2$ labels each arc of $c'$ by $1$.
  \end{itemize} 
  Since the merge map from $\Wmirror{c'}\amalg c'$ to $\Id$ induces
  multiplication in the algebra, and $1$ is a unit, it follows that
  $\ol{\Psi}(y_1\otimes y_2)=y$.
\end{proof}

In light of Theorem~\ref{thm:CK-pairing}, Lemma~\ref{lem:iota-func-iso},
and Lemma~\ref{lem:restrict-invt}, it is natural to restrict to the
case that $h_2=m-h_1$ and $k_2=n-k_1$, exactly as Chen-Khovanov
do. So, let
\begin{align*}
  \CKTalgBig{n}&=\bigoplus_{k=0}^n\CKTalg{n}{n-k}{k} &
  \CKTfuncBig{\Tan}&=\bigoplus_{\substack{h,k\\m-n=2(h-k)}}\CKTfunc{\Tan}{m-h}{h}{n-k}{k}.
\end{align*}

\section{Review of the spectral arc algebras and modules}\label{sec:review-spectral-arc-algebra}

Hu-Kriz-Kriz used functors to the Burnside category and
Elmendorf-Mandell's $K$-theory multifunctor to give a new
construction of a Khovanov homotopy type~\cite{HKK-Kh-htpy}.  One can
show~\cite{LLS-khovanov-product} that their construction is equivalent
to our original Khovanov stable homotopy type (from~\cite{RS-khovanov}).  The
canonical nature of the Elmendorf-Mandell construction makes
Hu-Kriz-Kriz's approach well-suited to functorial aspects of Khovanov
homotopy, like tangle invariants. Therefore, our spectral refinement
of the tangle invariants~\cite{LLS-kh-tangles} was carried out using
the Hu-Kriz-Kriz viewpoint, and we continue to use that approach
here.

This section provides concise background on
some of the relevant topics. (For more detailed background,
see~\cite[\S2]{LLS-kh-tangles}.)
Section~\ref{sec:multicategories} recalls the notion of a
multicategory, the basic framework for the
construction. Section~\ref{sec:burnside-multicat} introduces a
particular target multicategory, the graded Burnside category (of the
trivial group). Section~\ref{sec:shape-multicats} discusses how one
can reinterpret the tangle invariants (both combinatorial and
spectral) in terms of multifunctors. Section~\ref{sec:Hn-spec-review}
reviews key properties of the
spectral refinements of the arc algebras and modules
(from~\cite{LLS-kh-tangles}). Section~\ref{sec:absorbing}
introduces two analogues of ideals, submodules, and quotient rings
and modules in the setting of multifunctors, one of which is used to
define the spectral platform algebras and bimodules and the other of
which is used to prove the spectral bimodules are
invariants. Section~\ref{sec:equiv-funcs} recalls a notion of
equivalence for multifunctors which guarantees that the corresponding
spectral bimodules are weakly equivalent.

To avoid excessive repetition, the material presented in this section
omits many details from our previous paper~\cite{LLS-kh-tangles}, with
which the reader is assumed to be familiar or be willing to accept as a
black box.

\subsection{Multicategories}\label{sec:multicategories}
A \emph{multicategory} (or \emph{colored operad}) is an operad with
many objects or, equivalently, a category with $n$-input morphism sets
and the obvious kinds of compositions; so
\[
\begin{tikzpicture}[xscale=2.5]
  \node (monoid) at (0,0) {monoid};
  \node (category) at (1,0) {category};
  \node (operad) at (0,-1) {operad};
  \node (multicat) at (1,-1) {multicategory.};

  \draw[->] (monoid)--(category);
  \draw[->] (monoid)--(operad);
  \draw[->] (category)--(multicat);
  \draw[->] (operad)--(multicat);
  
  \node at ($(monoid)!0.5!(multicat)$) {\begin{tikzpicture}[scale=0.2]
      \draw (1,0)--(0,0)--(0,-1);
      \fill[black] (0.5,-0.5) circle (0.1cm);
    \end{tikzpicture}};
\end{tikzpicture}
\]
(See~\cite[Definition 2.1]{EM-top-machine}.) Any monoidal category can
be viewed as a multicategory (cf.~\emph{$\star$-categories}
from~\cite{HKK-Kh-htpy}). For instance, the category of abelian groups
forms a multicategory, with $\Hom(G_1,\dots,G_n;H)$ the group
homomorphisms $G_1\otimes\cdots\otimes G_n\to H$.  Similarly,
topological spaces (or simplicial sets) forms a multicategory, with
$\Hom(X_1,\dots,X_n;Y)$ the maps $X_1\times\cdots\times X_n\to
Y$. Similarly, symmetric spectra $\mSpectra$ \cite{HSS-top-symmetric}
forms a multicategory and is naturally a simplicial multicategory
(multicategory enriched in simplicial sets).
As in categories, given a multicategory $\Cat$ we will typically write
$\Cat(x_1,\dots,x_n;y)$ for $\Hom_{\Cat}(x_1,\dots,x_n;y)$.

A \emph{multifunctor} from $\Cat$ to $\Dat$ consists of a map
$F\co\Ob(\Cat)\to\Ob(\Dat)$ and, for each
$x_1,\dots,x_n,y\in\Ob(\Cat)$, a map
$\Cat(x_1,\dots,x_n;y)\to\Dat(F(x_1),\dots,F(x_n),F(y))$, respecting
the identity maps and composition. Note that while we sometimes
consider multicategories enriched in groupoids or simplicial sets,
for multicategories
composition is always strictly associative and has strict units, and
multifunctors always strictly respect the identity 
maps and compositions; we never consider a multicategorical analogue
of lax $2$-categories.

\subsection{The Burnside multicategory}\label{sec:burnside-multicat}

The \emph{graded Burnside multicategory} $\mBurnside$ is the
multicategory enriched in categories with:
\begin{itemize}[leftmargin=*]
\item Objects finite graded sets $X$.
\item $\mBurnside(X;Y)=\Hom_{\mBurnside}(X;Y)$ the category with objects finite correspondences
  $(A,s\co A\to X,t\co A\to Y)$ of graded sets and morphisms diagrams
  of graded set maps
  \[
  \begin{tikzpicture}[yscale=1.2]
    \tikzset{arrowlabel/.style={pos=0.5,inner sep=0pt,outer sep=1pt}}
    \node (x) at (0,0) {$X$};
    \node (y) at (4,0) {$Y$};
    \node (a) at (1,1) {$A$};
    \node (b) at (3,1) {$B$};

    \draw[->] (a) -- (x) node[arrowlabel,anchor=south east] {\tiny $s$};
    \draw[->] (b) -- (x) node[arrowlabel,anchor=north west] {\tiny $s$};
    \draw[->] (a) -- (y) node[arrowlabel,anchor=north east] {\tiny $t$};
    \draw[->] (b) -- (y) node[arrowlabel,anchor=south west] {\tiny $t$};

    \draw[->] (a) -- (b) node[arrowlabel,anchor=north] {\tiny $f$} node[arrowlabel,anchor=south] {\tiny $\cong$};
  \end{tikzpicture}
  \]
  with $f$ being a bijection.
\item More generally,
  $\mBurnside(X_1,\dots,X_n;Y)=\mBurnside(X_1\times\cdots\times
  X_n;Y)$, where $X_1\times\dots\times X_n$ is a graded set with
  \[
    \gr_{X_1\times\dots\times X_n}(x_1,\dots,x_n)=\gr_{X_1}(x_1)+\cdots+\gr_{X_n}(x_n)
  \]
  (like the grading on a tensor product).
\item Composition is induced by the fiber product.
\end{itemize}
(We elide some technicalities about associativity of the Cartesian
product of sets; see, e.g.,~\cite[\S3.2.1]{LLS-kh-tangles} for one way
of handling these.) Taking the nerve of each multimorphism category
turns $\mBurnside$ into a simplicial multicategory, which we also
denote $\mBurnside$ and call the graded Burnside multicategory.

There is a multifunctor $\Forget\co \mBurnside\to \GrAbelianGroups$
which sends a set $X$ to $\bigoplus_{x\in X}\ZZ$ and a correspondence
$A$ to the $Y\times X$-matrix $[\# s^{-1}(x)\cap t^{-1}(y)]_{x\in
  X,y\in Y}$.

The Elmendorf-Mandell machine can be used to lift this functor to
spectra. In more detail, Elmendorf-Mandell's $K$-theory is a multifunctor
from the multicategory of all permutative categories to spectra, and there
is a multifunctor from the graded Burnside multicategory to the multicategory of
permutative categories which sends $X\in\mBurnside$ to
$\Sets/X$. Composing these two functors, we get a (simplicially
enriched) functor $\mBurnside\to\GrSpectra$, which we will still
denote $K$. Moreover, the following diagram commutes up to quasi-isomorphism:
\[
\begin{tikzpicture}[xscale=2]
\node (burn) at (0,0) {$\mBurnside$};
\node (spec) at (1,0) {$\GrSpectra$};
\node (ab) at (0,-1) {$\GrAbelianGroups$};
\node (kom) at (1,-1) {$\GrComplexes$.};

\draw[->] (burn) -- (ab) node[midway,anchor=east] {\small $\Forget$};
\draw[->] (burn) -- (spec) node[midway,anchor=south] {\small $K$};
\draw[->] (spec) -- (kom) node[midway,anchor=west] {\small $C_*$};
\draw[right hook->] (ab) -- (kom);
\end{tikzpicture}
\]
In particular, for any $X\in\mBurnside$, $K(X)$ is
weakly equivalent to $\bigvee_{x\in X}\SphereS$.

\subsection{Tangle invariants as multifunctors}\label{sec:shape-multicats}
As a final step before using the Elmendorf-Mandell machine, we
reformulate the tangle invariants in terms of multifunctors. Consider the
\emph{tangle shape multicategory} $\SmTshape{2m}{2n}$ with:
\begin{itemize}[leftmargin=*]
\item Three kinds of objects: pairs $(a_1,a_2)\in\Crossingless{2m}\times\Crossingless{2m}$, pairs $(b_1,b_2)\in \Crossingless{2n}\times\Crossingless{2n}$, and pairs $(a,b)\in\Crossingless{2m}\times\Crossingless{2n}$. For reasons that will become clear, we will denote the third kind of object as $(a,T,b)$, where $T$ is a placeholder.
\item A unique multimorphism in each of the following cases, and no other multimorphisms:
  \begin{align}
    (a_1,a_2),(a_2,a_3),\dots,(a_{\alpha-1},a_\alpha)&\to (a_1,a_\alpha)\\
    (b_1,b_2),(b_2,b_3),\dots,(b_{\beta-1},b_\beta)&\to (b_1,b_\beta)\\
    (a_1,a_2),\dots,(a_{\alpha-1},a_\alpha),(a_\alpha,T,b_1),(b_1,b_2),\dots,(b_{\beta-1},b_\beta)&\to (a_1,T,b_\beta)\label{eq:T-multi-morph}\\  
    \shortintertext{The special cases $\alpha=1$ and $\beta=1$ are allowed, and they correspond to $0$-input morphisms:}
    \emptyset&\to (a,a)\\
    \emptyset&\to (b,b)
  \end{align}
\end{itemize}
This category has several full subcategories:
\begin{itemize}[leftmargin=*]
\item The category $\mHshapeS{2m}\subset\SmTshape{2m}{2n}$ spanned by the objects in $\Crossingless{2m}\times\Crossingless{2m}$. This is isomorphic to the subcategory of $\SmTshape{2n}{2m}$ spanned by the objects in $\Crossingless{2m}\times\Crossingless{2m}$.
\item The category $\mCKAshapeS{m}{h_1}{h_2}=\mCKAshapeSB{m}{\bh}\subset\mHshapeS{m+h_1+h_2}$ spanned by the objects in $\rCrossinglessB{m}{\bh}\times \rCrossinglessB{m}{\bh}\subset \Crossingless{m+h_1+h_2}\times\Crossingless{m+h_1+h_2}$.
\item The category $\SmCKTshape{m}{n}{h_1}{h_2}{k_1}{k_2}=\SmCKTshapeB{m}{n}{\bh}{\bk}\subset \SmTshape{m+h_1+h_2}{n+k_1+k_2}$ spanned by objects in
  \[
    \bigl(\rCrossinglessB{m}{\bh}\times\rCrossinglessB{m}{\bh}\bigr) \amalg\bigl(\rCrossinglessB{n}{\bk}\times\rCrossinglessB{n}{\bk}\bigr) \amalg 
    \bigl(\rCrossinglessB{m}{\bh}\times \rCrossinglessB{n}{\bk}\bigr).
  \]
  (with $h_1-h_2=k_1-k_2$).
\end{itemize}

The arc algebras determine a multifunctor 
\[
  F_{\KTalg{2m}}\co \mHshapeS{2m}\to\GrAbelianGroups
\]
by defining $F_{\KTalg{2m}}(a_1,a_2)=\KTalg{2m}(a_1,a_2)$ and  
\[
  F_{\KTalg{2m}}\bigl((a_1,a_2),(a_2,a_3),\dots,(a_{\alpha-1},a_\alpha)\to (a_1,a_\alpha)\bigr)
\]
to be the iterated composition (multiplication) map 
\[
  f_{12},f_{23},\dots,f_{\alpha-1,\alpha}\mapsto f_{\alpha-1,\alpha}\circ \cdots\circ f_{23}\circ f_{12}.
\]
Conversely, the functor $F_{\KTalg{2m}}$ determines the arc algebra in
an obvious way.  Extending further, Khovanov's invariant of a tangle $\Tan$ (with ordered
crossings) is equivalent to a functor
\[
  \SmTshape{2m}{2n}\to \GrComplexes.
\]
(See also \cite[\S2.3]{LLS-kh-tangles}.)

Similarly, the platform algebra and invariant of a tangle $\Tan$ are
equivalent to functors
\begin{align*}
  \mCKAshapeSB{m}{\bh}&\to \GrAbelianGroups, &
  \SmCKTshapeB{m}{n}{\bh}{\bk}&\to\GrComplexes,
\end{align*}
respectively.

For the homotopical refinement we need two variants on these
constructions. First, we need a \emph{canonical thickening}
$\mTshape{2m}{2n}$ of $\SmTshape{2m}{2n}$, which is a simplicial
multicategory with the same objects as $\SmTshape{2m}{2n}$ but in which
each $\Hom$-set is replaced by a (particular) simplicial
set~\cite[\S2.4]{LLS-kh-tangles}, defined in terms of labeled trees of
\emph{basic multimorphisms}.

Second, we need to be able to multiply by a cube. Define $\CCat{N}\ttimes\SmTshape{2m}{2n}$ to have objects 
\[
  \bigl(\Crossingless{2m}\times\Crossingless{2m}\bigr)\amalg
  \bigl(\Crossingless{2n}\times\Crossingless{2n}\bigr)\amalg
  \bigl(\Ob(\CCat{N})\times \Crossingless{2m}\times\Crossingless{2n}\bigr).
\]
where, again, $\CCat{N}=\{0\to 1\}^N$.

The multimorphisms in $\CCat{N}\ttimes\SmTshape{2m}{2n}$ are the same as
in $\SmTshape{2m}{2n}$ except that Formula~(\ref{eq:T-multi-morph}) is replaced
with
\begin{equation}
      (a_1,a_2),\dots,(a_{\alpha-1},a_\alpha),(v,a_\alpha,T,b_1),(b_1,b_2),\dots,(b_{\beta-1},b_\beta)\to (w,a_1,T,b_\beta)
\end{equation}
whenever $v\leq w\in \CCat{N}$ (with the partial order induced by
$0<1$). This category still has $\mHshapeS{2m}$ and $\mHshapeS{2n}$ as
subcategories. The Khovanov invariant of an $N$-crossing tangle $T$ is
induced by a functor
\[
  F_{\KTfunc{}}\co \CCat{N}\ttimes \SmTshape{2m}{2n}\to\GrAbelianGroups.
\]
There is also a canonical thickening $\CCat{N}\ttimes\mTshape{2m}{2n}$~\cite[\S3.2.4]{LLS-kh-tangles}.

The category $\CCat{N}\ttimes\SmTshape{m+h_1+h_2}{n+k_1+k_2}$ (respectively $\CCat{N}\ttimes\mTshape{m+h_1+h_2}{n+k_1+k_2}$) has a full subcategory $\CCat{N}\ttimes\SmCKTshapeB{m}{n}{\bh}{\bk}$ (respectively $\CCat{N}\ttimes\mCKTshapeB{m}{n}{\bh}{\bk}$).

\subsection{The homotopical refinement of Khovanov's arc algebras and
  bimodules}\label{sec:Hn-spec-review}

By using a particular multicategory of divided cobordisms, in our
previous paper~\cite{LLS-kh-tangles} we defined multifunctors
\begin{align*}
  \mHinv{2n}\co \mHshape{2n}\to\mBurnside\\
  \mTinv{T}\co \CCat{N}\ttimes \mTshape{2m}{2n}\to \mBurnside.
\end{align*}
(These multifunctors were denoted $\ulmHinv{n}$ and  $\ulmTinvNF{T}$.)
We will not need the details of these constructions, just the following
properties:
\begin{enumerate}[label=($\Psi$-\arabic*),leftmargin=*]
\item The compositions $\Forget\circ \mHinv{2n}$ and $\Forget\circ\mTinv{T}$
  agree with the Khovanov arc algebra and module $F_{\KTalg{2n}}$ and
  $F_{\KTfunc{}}$, respectively. Further, $\mHinv{2m}$ and $\mHinv{2n}$ are the restrictions of
  $\mTinv{T}$. More precisely, the following diagrams commute:
  \[
    \qquad
    \begin{tikzpicture}[xscale=2,yscale=1.5]
      \begin{scope}[xshift=-0.5cm]
      \node (t) at (0,0) {$\mHshape{2n}$};
      \node (ts) at (0,-1) {$\mHshapeS{2n}$};
      \node (b) at (1,0) {$\mBurnside$};
      \node (a) at (1,-1) {$\GrAbelianGroups$};

      \draw[->] (t) -- (b) node[midway,anchor=south] {\tiny $\mHinv{2n}$};
      \draw[->] (t) -- (ts);
      \draw[->] (ts) -- (a) node[midway,anchor=north] {\tiny $F_{\KTalg{2n}}$};
      \draw[->] (b) -- (a) node[midway,anchor=west] {\tiny $\Forget$};
      \end{scope}

      \begin{scope}[xshift=2cm]
      \node (t) at (-0.2,0) {$\CCat{N}\ttimes\mTshape{2m}{2n}$};
      \node (ts) at (-0.2,-1) {$\CCat{N}\ttimes\SmTshape{2m}{2n}$};
      \node (b) at (1,0) {$\mBurnside$};
      \node (a) at (1,-1) {$\GrAbelianGroups$};

      \draw[->] (t) -- (b) node[midway,anchor=south] {\tiny $\mTinv{T}$};
      \draw[->] (t) -- (ts);
      \draw[->] (ts) -- (a) node[midway,anchor=north] {\tiny $F_{\KTfunc{T}}$};
      \draw[->] (b) -- (a) node[midway,anchor=west] {\tiny $\Forget$};
      \end{scope}

      \begin{scope}[xshift=4.9cm]
      \node (t) at (0,0) {$\CCat{N}\ttimes\mTshape{2m}{2n}$};
      \node (sm) at (-1,0) {$\mHshape{2m}$};
      \node (sn) at (1,0) {$\mHshape{2n}$};
      \node (b) at (0,-1) {$\mBurnside$};

      \draw[->] (t) -- (b) node[midway,anchor=west] {\tiny $\mTinv{T}$};
      \draw[->] (sm) -- (b) node[midway,anchor=north east] {\tiny $\mHinv{2m}$};
      \draw[->] (sn) -- (b) node[midway,anchor=north west] {\tiny $\mHinv{2n}$};
      \draw[right hook->] (sm) -- (t);
      \draw[left hook->] (sn) -- (t);
      \end{scope}
    \end{tikzpicture}
  \]
\item On objects, $\mHinv{2n}(a,b)$ is the set of labelings of
  the components of $a\Wmirror{b}$ by elements of $\{1,X\}$, i.e., the
  set of Khovanov generators for $a\Wmirror{b}$. Similarly,
  $\mTinv{T}(v,a,T,b)$ (respectively $\mTinv{T}(a_1,a_2)$,
  $\mTinv{T}(b_1,b_2)$) is the set of Khovanov generators for
  $aT_v\Wmirror{b}$ (respectively $a_1\Wmirror{a_2}$,
  $b_1\Wmirror{b_2}$), where $T_v$ is the $v$-resolution of $T$.
\item\label{item:Psi-mmorph} Given a multi-morphism $f\in \mTshape{2m}{2n}(x_1,\dots,x_\ell;y)$ with
  associated canonical cobordism $\Sigma$, as well as Khovanov generators
  $c_i\in \mTinv{T}(x_i)$ and $d\in \mTinv{T}(y)$, the correspondence $\mTinv{T}(f)$ satisfies
  \[
    s^{-1}(c_1,\dots,c_\ell)\cap t^{-1}(d)=\emptyset \subset \mTinv{T}(f)
  \]
  unless all of the following are satisfied:
  \begin{enumerate}[leftmargin=*]
  \item Every component of $\Sigma$ has genus $0$ or $1$.
  \item For each genus $0$ component of $\Sigma$, either:
    \begin{enumerate}[leftmargin=*]
    \item all circles in the incoming boundary are labeled $1$ by $(c_1,\dots,c_\ell)$ and exactly one circle in the outgoing boundary is labeled $1$ by $d$ or
    \item exactly one circle in the incoming boundary is labeled $X$ by $(c_1,\dots,c_\ell)$ and all circles in the outgoing boundary are labeled $X$ by $d$.
    \end{enumerate}
  \item For each genus $1$ component of $\Sigma$, all incoming circles are labeled $1$ by $(c_1,\dots,c_\ell)$ and all outgoing circles are labeled $X$ by $d$.
  \end{enumerate}
  (Since $\mHinv{2n}$ is a restriction of $\mTinv{T}$, this also applies to
  $\mHinv{2n}$.)
\end{enumerate}

Now, the spectral arc algebra is defined as follows. Composing the
functor $\mHinv{2n}$ with the map $K\from\mBurnside\to\ZZ^{\mSpectra}$
coming from Elmendorf-Mandell's $K$-theory gives a functor
$\mHshape{2n}\to \ZZ^{\mSpectra}$. Their rectification
results~\cite[Theorems 1.3 and 1.4]{EM-top-machine} then give a
multifunctor $\mHshapeS{2n}\to \ZZ^{\mSpectra}$. Finally,
reinterpreting this as in Section~\ref{sec:shape-multicats} gives a
spectral category $\KTSpecCat{2n}$ with one object per crossingless
matching; if one prefers, one can take the wedge sum of all morphisms
spaces in this category to obtain a ring spectrum $\KTSpecRing{2n}$.

Similarly, given an oriented tangle diagram $T$ with $N$ crossings, of
which $N_+$ are positive, we can consider the composition
\[
\CCat{N}\ttimes \mTshape{2m}{2n}\stackrel{\mTinv{\Tan}}{\longrightarrow}
\mBurnside \stackrel{K}{\longrightarrow} \GrmSpectra.
\]
Rectifying this composition gives a multifunctor
$G\co \CCat{N}\ttimes \SmTshape{2m}{2n}\to \GrmSpectra$. We turn this
into a functor $H\co \SmTshape{2m}{2n}\to \GrmSpectra$ by letting
$H(a_1,a_2)=G(a_1,a_2)$, $H(b_1,b_2)=G(b_1,b_2)$, and $H(a,T,b)$ be
the iterated mapping cone, over the cube $\CCat{N}$, of $G(v,a,T,b)$,
formally desuspended $N_+$ times. We can then reinterpret $H$ as a
spectral bimodule $\KTSpecBim{T}$ over $\KTSpecCat{2m}$ and
$\KTSpecCat{2n}$, as in Section~\ref{sec:shape-multicats}. This can
also be viewed as a spectral bimodule $\KTSpecBimRing{T}$ over
$\KTSpecRing{2m}$ and $\KTSpecRing{2n}$.

Abstracting somewhat, let $\Cat$ be either $\SmTshape{2m}{2n}$ or
$\SmCKTshapeB{m}{n}{\bh}{\bk}$, and let $\Dat\subset\Cat$ be one of
$\mHshapeS{2m}$ or $\mHshapeS{2n}$ (if $\Cat$ is $\SmTshape{2m}{2n}$),
or $\mCKAshapeSB{m}{\bh}$ or $\mCKAshapeSB{n}{\bk}$ (if $\Cat$ is
$\SmCKTshapeB{m}{n}{\bh}{\bk}$). A \emph{stable functor} is a pair
$({F},S)$ where ${F}\co \CCat{N}\ttimes \wt{\Cat}\to \mBurnside$ and
$S\in\ZZ$. For instance, given a tangle $T$, $(\mTinv{\Tan},N_+)$ is a
stable functor. The procedure in the previous paragraph (rectifying,
taking mapping cone along $\CCat{N}$, then desuspending $S$
times)
produces a functor $\Realize{{F}}\co \Cat\to \GrmSpectra$. If
$A=F|_{\Dat}$ then let $\CRealize{A}$ be the spectral category
obtained by restricting $\Realize{F}$ to $\Dat$ and reinterpreting as
in Section~\ref{sec:shape-multicats}. Also as in
Section~\ref{sec:shape-multicats}, the functor $\Realize{{F}}$ may be
viewed as a spectral bimodule $\CRealize{F}$ over the two spectral
categories $\CRealize{A}$.  For $F=\mTinv{\Tan}$, the spectral
category $\CRealize{A}$ is the category $\KTSpecCat{2m}$ or
$\KTSpecCat{2n}$ and the spectral bimodule $\CRealize{F}$ is
$\KTSpecBim{T}$, as defined earlier.  So, to spectrify the
Chen-Khovanov tangle invariants, all that remains is to choose the
right stable functor $({F}\co
\CCat{N}\ttimes\mCKTshapeB{m}{n}{\bh}{\bk}\to \mBurnside,S)$.

\subsection{Subfunctors and quotient functors}\label{sec:absorbing}
The following will serve as an analogue of an ideal $I$ in a ring $R$:
\begin{definition}\label{def:absorbing-subfunctor}
  Given a multicategory $\Cat$ and a multifunctor
  $F\co\Cat\to\mBurnside$, an \emph{absorbing subfunctor} of $\Cat$ is
  a collection of subsets $G(a)\subset F(a)$, $a\in\Cat$, so that for
  any $p_1,\dots,p_\ell,q\in\Cat$, $f\in \Cat(p_1,\dots,p_\ell;q)$, $x_i\in
  F(p_i)$, and $y\in F(q)$, if some $x_i\in G(p_i)$ and $y\not\in
  G(q)$ then
  \begin{equation}\label{eq:absorbing}
    s^{-1}(x_1,\dots,x_\ell)\cap t^{-1}(y)=\emptyset \subset F(f).
  \end{equation}

  Extend $G$ to a multifunctor $G\co \Cat\to\mBurnside$ by defining, for
  $f\in\Cat(p_1,\dots,p_\ell;q)$,
  \[
    G(f)=s^{-1}(G(p_1)\times\dots\times G(p_\ell))\cap t^{-1}(G(q))\subset F(f)
  \]
  with the obvious source and target maps, and $2$-morphisms induced
  by $F$ in the obvious way. The fact that $G$ respects
  multi-composition, i.e., that for an $\ell$-input morphism $f$,
  \[
    G(f\circ(g_1,\dots,g_\ell))=G(f)\circ(G(g_1),\dots,G(g_\ell)),
  \]
  follows from Equation~\eqref{eq:absorbing}.

  Given an absorbing subfunctor $G$ of $F$, there is a corresponding
  \emph{quotient functor} $F/G$ defined as follows:
  \begin{itemize}[leftmargin=*]
  \item On objects $a\in\Cat$, $(F/G)(a)=F(a)\setminus G(a)$.
  \item On morphisms $f\in \Cat(p_1,\dots,p_\ell;q)$,
    $(F/G)(f)=s^{-1}((F/G)(p_1)\times\cdots\times (F/G)(p_\ell))\cap
    t^{-1}((F/G)(q))$.
  \item On $2$-morphisms, $F/G$ is induced from $F$.
  \end{itemize}
\end{definition}

\begin{lemma}
  If $G$ is an absorbing subfunctor then $F/G$ is a multifunctor.
\end{lemma}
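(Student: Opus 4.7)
The plan is to verify one by one the defining conditions of a multifunctor enriched in categories: well-definedness on objects and on multi-morphisms as correspondences, preservation of identities, preservation of 2-morphism structure, and compatibility with multi-composition. Since $F/G$ is defined by restricting $F$ to a complement on objects and then to preimages on correspondences, all of these conditions are automatic except the last, which is where the absorbing property \eqref{eq:absorbing} is used essentially.

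For objects, $(F/G)(a) = F(a) \setminus G(a)$ inherits a grading from $F(a)$. For $f \in \Cat(p_1,\dots,p_\ell;q)$, the subset $(F/G)(f) \subset F(f)$ is cut out by source and target constraints, so is tautologically a correspondence from $(F/G)(p_1) \times \cdots \times (F/G)(p_\ell)$ to $(F/G)(q)$ with the restricted source and target maps. Identities are preserved because $F(\mathrm{id}_a)$ is the diagonal of $F(a)$, whose intersection with $s^{-1}((F/G)(a)) \cap t^{-1}((F/G)(a))$ is the diagonal of $(F/G)(a)$. The 2-morphism bijections of $F$ preserve source and target on the nose, hence restrict to bijections between the corresponding preimages.

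The main verification is compatibility with multi-composition: for $f \in \Cat(p_1,\dots,p_\ell;q)$ and $g_i \in \Cat(\vec{r}_i;p_i)$, I want to show
\[
  (F/G)(f \circ (g_1,\dots,g_\ell)) \;=\; (F/G)(f) \circ ((F/G)(g_1),\dots,(F/G)(g_\ell))
\]
as sub-correspondences of $F(f)\circ(F(g_1),\dots,F(g_\ell)) = F(f\circ(g_1,\dots,g_\ell))$, the right-hand equality holding since $F$ is a multifunctor. Elements of this fiber product are tuples $(a, b_1, \dots, b_\ell)$ with $a \in F(f)$, $b_i \in F(g_i)$, and $s(a) = (t(b_1), \dots, t(b_\ell))$. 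The left-hand side imposes that the overall source lies in the product of the $(F/G)(r_{i,j})$ and that $t(a) \in (F/G)(q)$; the right-hand side additionally imposes $t(b_i) \in (F/G)(p_i)$ for every $i$. The inclusion $\supseteq$ is immediate. For $\subseteq$, suppose some $s(a)_i = t(b_i) \in G(p_i)$; then $a$ is an element of $F(f)$ with $s(a)_i \in G(p_i)$ and $t(a) \in F(q) \setminus G(q)$, directly contradicting \eqref{eq:absorbing}. Hence each $t(b_i) \in (F/G)(p_i)$, and each $b_i$ lies in $(F/G)(g_i)$ since its source already lies in the appropriate $(F/G)$-product.

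I expect the main obstacle to be not mathematical but notational: being careful with the bookkeeping for iterated fiber products and nested source/target indices for multi-morphisms with many inputs of varying arity. Once the hypothesis \eqref{eq:absorbing} is seen to be precisely the statement needed to force each intermediate element of a composition out of $G$, the verification is a direct unwinding of definitions, and no additional properties of $F$ or $\mBurnside$ are required.
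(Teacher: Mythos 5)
Your verification is correct and is exactly the unwinding the paper has in mind: its proof consists of the single remark that the claim is straightforward from the definitions and Equation~\eqref{eq:absorbing}, and your composition argument (using the absorbing condition to force each intermediate element $t(b_i)$ out of $G(p_i)$ when the overall target avoids $G(q)$) is precisely that straightforward check, spelled out. No discrepancy with the paper's approach.
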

\begin{proof}
  This is straightforward from the definition and
  Equation~\eqref{eq:absorbing}.
\end{proof}

For us, the quotient by an absorbing subfunctor will correspond to quotienting
both a ring and a module by an ideal. There is another kind of
quotient that corresponds to leaving the ring unchanged but taking the
quotient of a module, which is useful in proving invariance under
Reidemeister moves:
\begin{definition}\label{def:insular-subfunctor}\cite[Definition 3.25]{LLS-kh-tangles}  
  Let $\Cat$ be a multicategory and let $X\subset\Ob(\Cat)$ be a
  subset of the objects so that there are no multi-morphisms out of
  $X$, in the sense that if $p_1,\dots,p_\ell,q\in\Cat$ and some
  $p_i\in X$ and $q\not\in X$ then
  $\Cat(p_1,\dots,p_\ell;q)=\emptyset$. (In the application, $X$ will be
  the collection of objects of the form $(v,a,T,b)$ in
  $\CCat{N}\ttimes\mTshape{2m}{2n}$ or
  $\CCat{N}\ttimes\mCKTshapeB{m}{n}{\bh}{\bk}$.)

  Given a functor $F\co \Cat\to\mBurnside$, an \emph{insular
    subfunctor} of $F$ (relative to $X$) is a collection of subsets
  $G(a)\subset F(a)$ for $a\in X$, such that for any objects
  $p_1,\dots,p_\ell,q\in\Cat$ with some $p_i\in X$, morphism
  $f\in\Cat(p_1,\dots,p_\ell;q)$, and elements $x_j\in F(p_j)$ and
  $y\in F(q)$, if $x_i\in G(p_i)$ and $y\not\in G(q)$ then 
  \begin{equation}
    s^{-1}(x_1,\dots,x_{\ell})\cap t^{-1}(y)=\emptyset\subset F(f).\label{eq:insular}
  \end{equation}

  Extend $G$ to a functor $G\co \Cat\to\mBurnside$ by defining
  $G(p)=F(p)$ for $p\in \Ob(\Cat)\setminus X$ and, for
  $f\in\Cat(p_1,\dots,p_\ell;q)$,
  \[
    G(f)=s^{-1}(G(p_1)\times\dots\times G(p_\ell))\cap t^{-1}(G(q))\subset F(f)
  \]
  with the obvious source and target maps, and $2$-morphisms induced
  by $F$ in the obvious way. The fact that $G$ respects
  multi-composition follows from Equation~\eqref{eq:insular}.
  
  Given an insular subfunctor $G$ of $F$ there is a \emph{quotient
    functor} $F/G\co \CCat{N}\ttimes\mTshape{m}{n}\to\mBurnside$
  defined by:
  \begin{itemize}[leftmargin=*]
  \item $(F/G)(p)=F(p)$ if $p\not\in X$,
  \item $(F/G)(p)=F(p)\setminus G(p)$ if $p\in X$,
  \item $(F/G)(f)=s^{-1}((F/G)(p_1)\times\dots\times (F/G)(p_\ell))\cap t^{-1}((F/G)(q))\subset F(f)$ for $f\in\Cat(p_1,\dots,p_\ell;q)$, and
  \item the value of $F/G$ on $2$-morphisms is induced by $F$.
  \end{itemize}
\end{definition}

\begin{lemma}
  If $G$ is an insular subfunctor then $F/G$ is a multifunctor.
\end{lemma}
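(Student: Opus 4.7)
The plan is to mimic the proof sketch given for the absorbing case. The objects, morphism-correspondences, and $2$-morphism data for $F/G$ are all inherited by restriction from $F$, so well-definedness of source and target maps is automatic, and the only things to verify are preservation of identities and of multi-composition. Identity preservation I expect to be immediate: for any object $p \in \Ob(\Cat)$, the correspondence $F(\Id_p)$ is the identity correspondence on $F(p)$, and its restriction to $(F/G)(p)$ is again the identity correspondence there.

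For composition I would fix $f \in \Cat(q_1,\dots,q_m;r)$ and $g_j \in \Cat(p_{j,1},\dots,p_{j,k_j};q_j)$ for $j = 1,\dots,m$, and unfold both sides of the desired equality
\[
(F/G)(f \circ (g_1,\dots,g_m)) = (F/G)(f) \circ ((F/G)(g_1),\dots,(F/G)(g_m))
\]
as subsets of the fiber-product presentation of $F(f \circ (g_1,\dots,g_m))$ furnished by $F$ being a multifunctor. The left-hand side is cut out only by the outer conditions that the total sources lie in $\prod_{j,i}(F/G)(p_{j,i})$ and the total target lies in $(F/G)(r)$; the right-hand side imposes the additional intermediate conditions $y_j \in (F/G)(q_j)$ for each $j$.

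The key step, where the insular hypothesis does its work, is to show these intermediate conditions are automatic. If $q_j \notin X$ there is nothing to check since $(F/G)(q_j) = F(q_j)$. If $q_j \in X$ and some intermediate value $y_j$ hypothetically lay in $G(q_j)$, then I would apply Equation~\eqref{eq:insular} to the morphism $f$, with $j$-th source coordinate $y_j \in G(q_j)$ and target $y \in (F/G)(r) = F(r)\setminus G(r)$, forcing the corresponding fiber in $F(f)$ to be empty and contradicting the existence of the given fiber-product element. Compatibility with $2$-morphisms I then expect to be automatic, since $F/G$ inherits this data verbatim from $F$. I do not anticipate any real obstacle beyond this intermediate-restriction argument, which is the entire substance of the lemma.
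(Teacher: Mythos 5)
Your proof is correct and follows exactly the argument the paper has in mind (the paper's proof is just the remark that this is straightforward from the definitions and Equation~\eqref{eq:insular}): the only nontrivial point is that the intermediate elements of a composed correspondence are forced to lie outside $G$ by the insular condition, which is precisely your key step. One tiny gloss: your identification $(F/G)(r)=F(r)\setminus G(r)$ tacitly uses that the target $r$ lies in $X$, which indeed holds because $X$ admits no multimorphisms out of it, so this is not a gap.
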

\begin{proof}
  Again, this is straightforward from the definitions and Equation~\eqref{eq:insular}.
\end{proof}

If $\Cat$ is either $\CCat{N}\ttimes\mTshape{2m}{2n}$ or
$\CCat{N}\ttimes\mCKTshapeB{m}{n}{\bh}{\bk}$, $X$ is the collection of
objects of the form $\{(v,a,T,b)\}$, $F\co\Cat\to \mBurnside$, and $G$ is
an insular subfunctor of $F$, then there is a cofibration sequence of
\[
  \CRealize{G}\to \CRealize{F}\to \CRealize{F/G}
\]
of bimodules (over $\CRealize{F|_{\mHshape{2m}}}$ and
$\CRealize{F|_{\mHshape{2n}}}$ if $\Cat=\CCat{N}\ttimes\mTshape{2m}{2n}$
or over $\CRealize{F|_{\mCKAshapeB{m}{\bh}}}$ and
$\CRealize{F|_{\mCKAshapeB{n}{\bk}}}$ if
$\Cat=\CCat{N}\ttimes\mCKTshapeB{m}{n}{\bh}{\bk}$).

These two notions of subfunctor are compatible, in the following sense:
\begin{lemma}\label{lem:insular-absorbing}
  Fix a multicategory $\Cat$ and a subset $X\subset\Ob(\Cat)$ as in
  Definition~\ref{def:insular-subfunctor}.  Suppose
  $F\co\Cat\to\mBurnside$ is a multifunctor, $G$ is an insular
  subfunctor of $F$, and $H$ is an absorbing subfunctor of $F$. Then $G$
  induces an insular subfunctor $\overline{G}$ of $F/H$ via the
  formula
  \[
    \overline{G}(a)=G(a)\setminus H(a).
  \]
  for $a\in X$.
\end{lemma}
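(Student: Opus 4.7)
The plan is to unpack both definitions and verify the insular condition for $\overline{G}$ inside $F/H$ by reducing it to the insular condition for $G$ inside $F$. There is no deep content here; the only thing to be careful about is tracking which complements are being taken.

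First I would check that $\overline{G}(a)$ is a well-defined collection of subsets of $(F/H)(a)$: for $a\in X$, we have $\overline{G}(a)=G(a)\setminus H(a)\subset F(a)\setminus H(a)=(F/H)(a)$, so this is automatic.

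The substantive step is to verify Equation~\eqref{eq:insular} for $\overline{G}$ viewed as a collection of subsets of $F/H$. Fix objects $p_1,\dots,p_\ell,q\in\Cat$ with some $p_i\in X$, a multi-morphism $f\in\Cat(p_1,\dots,p_\ell;q)$, and elements $x_j\in (F/H)(p_j)$ and $y\in(F/H)(q)$ with $x_i\in\overline{G}(p_i)$ and $y\notin\overline{G}(q)$. I want to show that
\[
s^{-1}(x_1,\dots,x_\ell)\cap t^{-1}(y)=\emptyset\subset (F/H)(f).
\]
Since $y\in (F/H)(q)=F(q)\setminus H(q)$, the assumption $y\notin\overline{G}(q)=G(q)\setminus H(q)$ forces $y\notin G(q)$; and $x_i\in\overline{G}(p_i)\subset G(p_i)$. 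Applying the insular condition for $G$ inside $F$ gives
\[
s^{-1}(x_1,\dots,x_\ell)\cap t^{-1}(y)=\emptyset\subset F(f).
\]
Since $(F/H)(f)\subset F(f)$ by definition, the intersection is also empty inside $(F/H)(f)$, which is exactly what we needed. This shows $\overline{G}$ satisfies the insular condition with respect to $F/H$ and $X$, so by the lemma following Definition~\ref{def:insular-subfunctor}, $(F/H)/\overline{G}$ is a multifunctor.

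The only potential obstacle is a subtle one: one needs to verify that the condition ``$y\notin\overline{G}(q)$'' together with ``$y\in(F/H)(q)$'' really does propagate to ``$y\notin G(q)$.'' This is precisely the observation that $\overline{G}(q)=G(q)\cap(F/H)(q)$ because $H(q)$ is disjoint from $(F/H)(q)$. Once this is noted, the rest follows immediately from the insular hypothesis on $G$ and the inclusion of morphism sets $(F/H)(f)\subset F(f)$.
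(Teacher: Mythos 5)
Your proposal is correct and follows essentially the same route as the paper's proof: unwind the definitions to observe that $x_i\in\overline{G}(p_i)$ forces $x_i\in G(p_i)$ while $y\in(F/H)(q)\setminus\overline{G}(q)=F(q)\setminus(G(q)\cup H(q))$ forces $y\notin G(q)$, and then invoke the insular condition for $G$ in $F$. The extra remark that the empty intersection persists because $(F/H)(f)\subset F(f)$ with restricted source and target maps is a harmless (and correct) elaboration of a point the paper leaves implicit.
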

\begin{proof}
  Fix objects $p_1,\dots,p_\ell,q\in\Ob(\Cat)$ with $p_i\in X$, a
  morphism $f\in\Hom(p_1,\dots,p_\ell;q)$, and elements
  $x_j\in (F/H)(p_j)=F(p_j)\setminus H(p_j)$ and
  $y\in (F/H)(q)=F(q)\setminus H(q)$. Suppose
  $x_i\in \overline{G}(p_i)=G(p_i)\setminus H(p_i)$ and
  $y\in (F/H)(q)\setminus \ol{G}(q)=F(q)\setminus (G(q)\cup H(q))$.  Then,
  in particular, $x_i\in G(p_i)$ and $y\in F(q)\setminus G(q)$,
  so since $G$ is insular,
  \[
    s^{-1}(x_1,\dots,x_{\ell})\cap t^{-1}(y)=\emptyset,
  \]
  as desired.
\end{proof}

\subsection{Equivalent functors}\label{sec:equiv-funcs}
To prove the spectral refinements are invariant under Reidemeister
moves, we use a notion of equivalence of multifunctors to the graded Burnside
multicategory, which we spell out here.

Let $\Cat$ be either $\mTshape{2m}{2n}$ or $\mCKTshapeB{m}{n}{\bh}{\bk}$.

\begin{definition}
  A \emph{face inclusion} is a functor $i\co \CCat{M}\to\CCat{N}$
  which is injective on objects and preserves the relative
  grading. Given a face inclusion $i$, let $|i|$ be the amount by
  which $i$ shifts the absolute grading.
\end{definition}

The restriction of a functor $\CCat{N}\ttimes\Cat\to\mBurnside$ under a face inclusion is a functor $\CCat{M}\ttimes\Cat\to\mBurnside$. We can also extend functors under face inclusions:
\begin{definition}\cite[Definition 3.24]{LLS-kh-tangles}
  Let $i\co \CCat{M}\to\CCat{N}$ be a face inclusion and $F\co
  \CCat{M}\ttimes \Cat\to\mBurnside$. There is an induced functor
  $i_!F\co \CCat{N}\ttimes\Cat\to \mBurnside$ defined on objects by
  $(i_!F)(a,b)=F(a,b)$ and
  \[
    (i_!F)(v,a,T,b)=\begin{cases}F(u,a,T,b)&\text{if $v=i(u)$ is in the image of $i$,}\\
      \emptyset&\text{otherwise.}\end{cases}
  \]
  On multimorphisms, $i_!F$ is induced by $F$; see our previous paper
  for an explicit description.
\end{definition}

Given a stable functor $(F,S)\co\CCat{N}\ttimes\Cat\to\mBurnside$, for
each pair of crossingless matchings $(a,b)$ we have a cube of abelian
groups by restricting $\Forget\circ F\co \CCat{N}\ttimes \Cat\to
\GrAbelianGroups$ to the full subcategory spanned by the objects of
the form $(v,a,\Tan,b)$. Let $\Total{\Forget\circ F,S}$ be the direct
sum over $a,b$ of the total complex of this cube, with grading shifted
down by $S$. So, for instance, if $(F,S)=(\mTinv{\Tan},N_+)$ then
$\Total{\Forget\circ F,S}$ is the chain complex underlying the Khovanov
tangle invariant.

\begin{definition}\cite[\S3.5.2]{LLS-kh-tangles}
  We say that two stable functors $(F,S), (G,T)\co\CCat{N}\ttimes
  \Cat\to \mBurnside$ are \emph{simply stably equivalent} if either:
  \begin{enumerate}[leftmargin=*]
  \item There is a face inclusion $i\co \CCat{M}\to\CCat{N}$ so that $G=i_!F$ and $T=S+N-M-|i|$, or
  \item $S=T$ and there is a functor $H\co \CCat{N+1}\ttimes\Cat\to\mBurnside$ so that $F=H|_{\{0\}\times\CCat{N}}$, $G=H|_{\{1\}\times\CCat{N}}$, and the chain complex $\Total{\Forget\circ H,S}$ is acyclic.
  \end{enumerate}
  Two functors are \emph{stably equivalent} if they can be connected by a sequence of simple stable equivalences (i.e., stable equivalence is the symmetric, transitive closure of simple stable equivalence).
\end{definition}

\begin{lemma}\label{lem:equiv-gives-equiv}
  If $F_1$ and $F_2$ are stably equivalent multifunctors then
  $\CRealize{{F_1}}$ and $\CRealize{{F_2}}$ are equivalent spectral
  bimodules.
\end{lemma}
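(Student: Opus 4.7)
The plan is to reduce the claim to the two cases of simple stable equivalence, since stable equivalence is by definition the symmetric and transitive closure of these. Each case will then be verified separately using the spectral realization machinery reviewed in Section~\ref{sec:Hn-spec-review}. In both cases, the two functors agree on the algebra subcategories (these sit inside $\Cat$ and are unaffected either by face inclusions or by restriction to a face of the extra cube direction), so the realizations $\CRealize{F_1}$ and $\CRealize{F_2}$ live over a common pair of spectral categories, and it makes sense to compare them as bimodules.

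For the face inclusion case, suppose $F_1 \co \CCat{M} \ttimes \Cat \to \mBurnside$ has shift $S$, and $F_2 = i_! F_1$ has shift $T = S + N - M - |i|$, where $i\co \CCat{M} \to \CCat{N}$ is a face inclusion. By construction, $i_! F_1$ takes the value $\emptyset$ on objects $(v,a,\Tan,b)$ with $v$ outside the image of $i$, so after $K$-theory the corresponding vertices of the cube are contractible. An iterated mapping cone over $\CCat{N}$ whose vertices outside a sub-cube are contractible collapses to the iterated mapping cone over that sub-cube, up to an overall suspension determined by the codimension $N - M$ and the positional shift $|i|$ of the face. The relation $T = S + N - M - |i|$ is precisely what is needed to cancel this suspension, yielding a weak equivalence $\CRealize{F_2} \simeq \CRealize{F_1}$ of bimodules.

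For the acyclic extension case, suppose $H \co \CCat{N+1} \ttimes \Cat \to \mBurnside$ restricts to $F_1$ on $\{0\} \times \CCat{N}$ and to $F_2$ on $\{1\} \times \CCat{N}$, with $\Total{\Forget \circ H, S}$ acyclic. Splitting $\CCat{N+1} = \{0 \to 1\} \times \CCat{N}$ exhibits the iterated mapping cone of $H$ as the mapping cone of the induced map between the two face realizations, producing a cofiber sequence of bimodules
\[
\CRealize{F_1} \to \CRealize{F_2} \to \CRealize{H}.
\]
It thus suffices to show that $\CRealize{H}$ is weakly contractible. Since Elmendorf-Mandell $K$-theory sends a Burnside set $X$ to a spectrum weakly equivalent to $\bigvee_{x \in X} \SphereS$, the bimodule $\CRealize{H}$ inherits a cellular filtration from the cube $\CCat{N+1}$, whose associated cellular chain complex is quasi-isomorphic to $\Total{\Forget \circ H, S}$. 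A Whitehead-style argument for cell spectra then forces $\CRealize{H}$ to be weakly contractible.

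The hard part will be the acyclic case: I must check that the cellular identification is compatible with the ambient bimodule structure over the two spectral algebras, so that the weak contractibility of $\CRealize{H}$ takes place in the appropriate category of bimodule spectra and not merely in the category of underlying spectra. This compatibility should follow from multifunctoriality of the entire pipeline (Burnside $\to$ permutative categories $\to$ spectra, then rectification and cube realization) together with the observation that the restriction of $H$ to the algebra subcategories is constant along the extra cube direction, but it must be verified carefully to conclude that $F_1$ and $F_2$ determine weakly equivalent bimodules.
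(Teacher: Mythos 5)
Your proposal is correct in outline, but it takes a genuinely different route from the paper: the paper's entire proof is a citation, namely that for $\Cat=\mTshape{2m}{2n}$ the statement is \cite[Proposition 4.7]{LLS-kh-tangles}, together with the observation that the argument is insensitive to replacing the tangle shape multicategory by the platform one $\mCKTshapeB{m}{n}{\bh}{\bk}$, since nothing in it uses more than the formal structure of the shape category (in particular that the algebra subcategories are blockaded and unchanged). What you do instead is reconstruct that argument from scratch: reduce to simple stable equivalences; in the face-inclusion case use that $i_!F$ is $\emptyset$ (hence trivial after $K$-theory) off the image of $i$, so the iterated mapping cone over $\CCat{N}$ collapses to the one over $\CCat{M}$ up to $N-M-|i|$ suspensions, which the shift $T=S+N-M-|i|$ exactly cancels; in the acyclic case split $\CCat{N+1}=\{0\to1\}\times\CCat{N}$ to obtain a cofiber sequence $\CRealize{F_1}\to\CRealize{F_2}\to\CRealize{H}$ of bimodules and kill $\CRealize{H}$ by a cellular Whitehead argument, using that its cellular chain complex is $\Total{\Forget\circ H,S}$. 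This is essentially the content of the cited proposition, so your route buys self-containedness at the cost of redoing work the paper outsources, while the paper's route isolates the only genuinely new point, which is the one you flag yourself: that the restriction to the platform algebra subcategories is constant along the extra cube direction, so the comparison maps are bimodule maps. For that last step note that weak equivalence of bimodules over a fixed pair of spectral categories is detected objectwise, so contractibility of each spectrum $\CRealize{H}(a,b)$ plus the bimodule structure on the maps is all you need; the only remaining technical care (handled in the earlier paper) is that restriction to a face commutes with the Elmendorf--Mandell rectification up to natural equivalence.
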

\begin{proof}
  In the case $\Cat=\mTshape{2m}{2n}$ this was proved in our previous
  paper~\cite[Proposition 4.7]{LLS-kh-tangles}. The proof for
  $\Cat=\mCKTshapeB{m}{n}{\bh}{\bk}$ is exactly the same.
\end{proof}

One way to produce stable equivalences is to produce insular
subfunctors. Suppose $G\subset F$ is an insular subfunctor. Fix an
integer $S$.
\begin{itemize}[leftmargin=*]
\item If $\CRealize{F/G}$ is contractible then $(F,S)\simeq (G,S)$.
\item If $\CRealize{G}$ is contractible then $(F,S)\simeq (F/G,S)$.
\item If $\CRealize{F}$ is contractible then
  $(G,S)\simeq (F/G,S+1)$.
\end{itemize}

\section{Spectral platform algebras and modules}\label{sec:spectral-CK}
\begin{table}
  \centering
  \begin{tabular}[tab:notation]{llp{4in}}
    \toprule
    Notation &  Meaning\\
    \midrule
    $\mBurnside$  & Graded Burnside multicategory.\\
    $\mSpectra$  & Multicategory of symmetric spectra.\\
    $\mHshapeS{2n}$  & Strict arc algebra shape multicategory.\\
    $\mHshape{2n}$  & Groupoid-enriched arc algebra shape multicategory.\\
    $\mHinv{2n}$  & Functor $\mHshape{2n}\to\mBurnside$ refining arc algebra.\\
    $\KTSpecCat{2n}$  & Spectral category refining $\KTalg{2n}$.\\
    $\SmTshape{2m}{2n}$  & Strict tangle shape multicategory.\\
    $\mTshape{2m}{2n}$  & Groupoid-enriched tangle shape multicategory.\\
    $\mTinv{\Tan}$ & Functor $\CCat{N}\ttimes \mTshape{2m}{2n}\to\mBurnside$ refining Khovanov cube of bimodules.\\
    $\KTSpecBim{\Tan}$  & Spectral bimodule refining $\KTfunc{\Tan}$.\\
    \midrule
    $\mCKAshapeSB{n}{\bk}$  & Strict platform algebra shape multicategory.\\
    $\mCKAshapeB{n}{\bk}$  & Groupoid-enriched platform algebra shape multicategory.\\
    $\mHinvResB{n}{\bk}$  & Restriction of $\mHinv{n+k_1+k_2}$ to $\mCKAshapeB{n}{\bk}$.\\
    $\mI$ & Absorbing subfunctor of $\mHinvResB{n}{\bk}$ corresponding to $\CKTidealB{n}{\bk}$.\\
    $\mAinvB{n}{\bk}$ & Quotient functor $\mHinvResB{n}{\bk}/\mI$ refining platform algebra.\\
    $\CKTSpecCatB{n}{\bk}$  & Spectral category refining $\CKTalgB{n}{\bk}$.\\
    $\CKTSpecCatBig{n}$ & Spectral category refining $\CKTalgBig{n}$, given by $\coprod_{k=0}^n\CKTSpecCat{n}{n-k}{k}$.\\
    $\SmCKTshapeB{m}{n}{\bh}{\bk}$  & Strict platform tangle shape multicategory.\\
    $\mCKTshapeB{m}{n}{\bh}{\bk}$  & Groupoid-enriched platform tangle shape multicategory.\\
    $\mTinvResB{\Tan}{\bh}{\bk}$ & Restriction of $\mTinv{\ou{\Tan}}$ to $\CCat{N}\ttimes \mCKTshapeB{m}{n}{\bh}{\bk}$.\\
    $\mJ$ & Absorbing subfunctor of $\mTinvResB{\Tan}{\bh}{\bk}$ corresponding to $\CKTfuncIdealB{\Tan}{\bh}{\bk}$.\\
    $\mCKTinvBtemp{\Tan}{\bh}{\bk}$ & Quotient functor $\mTinvResB{\Tan}{\bh}{\bk}/\mJ$ refining platform cube of bimodules.\\
    $\mCKTinvB{\Tan}{\bh}{\bk}$ & Minor modification of $\mCKTinvBtemp{\Tan}{\bh}{\bk}$ taking into account certain natural isomorphisms.\\
    $\CKTSpecBimB{\Tan}{\bh}{\bk}$ & Spectral bimodule refining $\CKTfuncB{\Tan}{\bh}{\bk}$.\\
    $\CKTSpecBimBig{\Tan}$ & Spectral bimodule refining $\CKTfuncBig{\Tan}$, given by $\displaystyle\coprod_{\substack{h,k\\m-n=2(h-k)}}\CKTSpecBim{\Tan}{m-h}{h}{n-k}{k}$.\\
    \bottomrule
  \end{tabular}
  \caption{\textbf{Notation used in the homotopical refinements.} The notions in the top are used for spectral refinements of arc algebras and bimodules, and the bottom ones are used for spectral refinements of platform algebras and bimodules. Compare with their homological versions in Table~\ref{tab:notation}.}
  \label{tab:hom-notation}
\end{table}

Just like the definitions of the ordinary platform algebras and
modules, we define the spectral platform algebras by first restricting
and then quotienting. We start by restricting---the analogues of
$\CKTalgTB{n}{\bk}$ and
$(\jmath\circ \imath^{k_1-h_1}\otimes \jmath)^*\KTfunc{\ou{\Tan}}$.

\begin{convention}\label{conv:h-is-big}
  In Section~\ref{sec:CK-background} we recalled how to associate an
  algebra $\CKTalg{n}{k_1}{k_2}$ to a tuple of integers $(n,k_1,k_2)$,
  so $\CKTalg{2n}{0}{0}$ is Khovanov's platform algebra and
  $\CKTalg{n}{k_1}{k_2}$ is isomorphic to one of the Chen-Khovanov
  algebras whenever $k_1+k_2\geq n$. In the rest of the paper, we only
  consider the case that $k_1+k_2\geq n$ (and similarly, tuples
  $(m,h_1,h_2)$ with $h_1+h_2\geq m$), so we are only considering the
  Chen-Khovanov case.
\end{convention}

Recall from Section~\ref{sec:shape-multicats} that we have full submulticategories 
\[\mCKAshapeB{n}{\bk}\subset \mHshape{n+k_1+k_2},\qquad\mCKTshapeB{m}{n}{\bh}{\bk}\subset \mTshape{m+h_1+h_2}{n+k_1+k_2},\qquad\CCat{N}\ttimes \mCKTshapeB{m}{n}{\bh}{\bk}\subset \CCat{N}\ttimes \mTshape{m+h_1+h_2}{n+k_1+k_2}.
\]
The multifunctor $\mHinv{n+k_1+k_2}$ restricts to a multifunctor 
\[
  \mHinvResB{n}{\bk}\co \mCKAshapeB{n}{\bk}\to\mBurnside.
\]

Next, fix a flat $(m,n)$-tangle $\Tan$ and pairs $\bh=(h_1,h_2)$ and
$\bk=(k_1,k_2)$ with $h_1-h_2=k_1-k_2$. Without loss of
generality, assume that $h_1\leq k_1$. Recall that $\ou{\Tan}$ is
the result of adding $k_1$ horizontal strands below $\Tan$ and
$k_2$ horizontal strands above $\Tan$. The multifunctor
$\mTinv{\ou{\Tan}}$ restricts to a multifunctor
\[
  \mTinvResB{\Tan}{\bk}{\bk}\co \mCKTshapeB{m}{n}{\bk}{\bk}\to \mBurnside.
\]
The map $\imath\co \Crossingless{m+h_1+h_2}\into \Crossingless{m+h_1+h_2+2}$ induces an isomorphism
\[
  \imath\co \mCKTshape{m}{n}{h_1}{h_2}{k_1}{k_2}\to \mCKTshape{m}{n}{h_1+1}{h_2+1}{k_1}{k_2}
\]
(cf.~Convention~\ref{conv:h-is-big}).
Let
\[
  \mTinvResB{\Tan}{\bh}{\bk}=\mTinvResB{\Tan}{\bk}{\bk}\circ \imath^{k_1-h_1}\co \mCKTshapeB{m}{n}{\bh}{\bk}\to \mBurnside.
\]

More generally, given an $(m,n)$-tangle $\Tan$ with $N$ crossings,
$\mTinv{\ou{\Tan}}$ restricts to a multifunctor
\[
  \mTinvResB{\Tan}{\bk}{\bk}\co \CCat{N}\ttimes\mCKTshapeB{m}{n}{\bk}{\bk}\to \mBurnside.
\]
The map $\imath$ induces an isomorphism
\[
  \imath\co
  \CCat{N}\ttimes\mCKTshape{m}{n}{h_1}{h_2}{k_1}{k_2}\to
  \CCat{N}\ttimes\mCKTshape{m}{n}{h_1+1}{h_2+1}{k_1}{k_2},
\]
and we again let
\[
  \mTinvResB{\Tan}{\bh}{\bk}=\mTinvResB{\Tan}{\bk}{\bk}\circ \imath^{k_1-h_1}\co \CCat{N}\ttimes\mCKTshapeB{m}{n}{\bh}{\bk}\to \mBurnside.
\]

Given $(a_1,a_2)\in \Ob(\mCKAshapeB{n}{\bk})$ define
$\mI(a_1,a_2)\subset \mHinvResB{n}{\bk}(a_1,a_2)$ to be the set of Khovanov
generators for $a_1\Wmirror{a_2}$ which are in the ideal
$\CKTidealB{n}{\bk}(a_1,a_2)$.  Similarly, given
$o\in\Ob(\CCat{N}\ttimes \mCKTshapeB{m}{n}{\bh}{\bk})$, if $o$ has the
form $(a_1,a_2)$ or $(b_1,b_2)$ define $\mJ(o)\subset \mTinvResB{\Tan}{\bh}{\bk}(o)$
to be $\mI(o)$. If $o$ has the form $(v,a,T,b)$, define $\mJ(v,a,T,b)$
to be the set of Khovanov generators for
$\imath^{k_1-h_1}(a)\ou{\Tan_v}\Wmirror{b}$ which are in the
submodule $\CKTfuncIdealB{\Tan_v}{\bh}{\bk}(a,b)$.

\begin{lemma}\label{lem:mI-absorbing}
  The subsets $\mI$ and $\mJ$ form absorbing subfunctors.
\end{lemma}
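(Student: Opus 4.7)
The plan is to verify the absorbing condition by tracing labels through the divided cobordisms encoded by multimorphisms, translating the proofs of \Lemma{CK-ideal}, \Proposition{CK-Ideal}, and \Proposition{CK-cube-maps-descend} into the multifunctor setting. The key bridge is Property~\ref{item:Psi-mmorph}, which characterizes when a fiber $s^{-1}(c_1,\dots,c_\ell)\cap t^{-1}(d)\subset \mTinv{T}(f)$ is nonempty purely in terms of labels of incoming/outgoing circles and the genus of components of the associated canonical cobordism $\Sigma$. Since $\mHinv{}$ is a restriction of $\mTinv{}$ and $\mHinvResB{n}{\bk}$, $\mTinvResB{\Tan}{\bh}{\bk}$ are in turn restrictions, the same property applies to all the multifunctors in sight.

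First I would handle $\mI$. Fix $f\in \mCKAshapeB{n}{\bk}(p_1,\dots,p_\ell;q)$, choose $c_i\in \mI(p_i)$ for some $i$, and $d\in \mHinvResB{n}{\bk}(q)\setminus \mI(q)$; I want to show $s^{-1}(c_1,\dots,c_\ell)\cap t^{-1}(d)=\emptyset$. Suppose for contradiction it is nonempty, so the labels of incoming and outgoing circles are related by the merge/split/genus rules of Property~\ref{item:Psi-mmorph}. There are two cases, matching \ref{item:alg-III} and \ref{item:alg-II}:
\begin{itemize}[leftmargin=*]
\item If $p_i = (a_1,a_2)$ contains a type III circle $Z$ (one meeting some platform in points $P,Q$ more than once), then $P$ and $Q$ are preserved by $\Sigma$ as points on platforms. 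Tracking the component of $\Sigma$ containing $Z$ through the sequence of merges and splits dictated by Property~\ref{item:Psi-mmorph} shows that either the circle containing $P,Q$ in $q$ still meets a platform in two points (type III), so $d\in\mI(q)$, or at some split these points separate; in the latter case the genus-0 split rule forces at least one of the resulting circles in $q$ to be labeled $X$ while still meeting a platform, again placing $d\in\mI(q)$.
\item If $p_i$ has no type III circles but $c_i$ labels some type II circle by $X$ at a platform point $P$, then by the merge/split rules for $X$, the circle containing $P$ at the output continues to be labeled $X$, so $d$ labels a circle through a platform by $X$, hence $d\in\mI(q)$.
\end{itemize}
Both cases contradict $d\notin\mI(q)$, establishing the absorbing property.

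The argument for $\mJ$ is the same on multimorphisms among the algebra-objects $(a_1,a_2)$ or $(b_1,b_2)$, and reduces to the $\mI$ case via the restriction along $\imath^{k_1-h_1}$. The additional content is for multimorphisms of the form \Equation{T-multi-morph} landing at an object $(v,a,T,b)$ or relating two such objects along a cube edge. Here we must track type II/III \emph{arcs} of the partial closure $\imath^{k_1-h_1}(a)\,\ou{\Tan_v}\,\Wmirror{b}$, not just circles. Using \Lemma{type-III}, a type III arc corresponds to a circle in the full closure meeting a platform twice, and the same split analysis as above together with Property~\ref{item:Psi-mmorph} shows that such a feature, together with the $X$-label propagation, is preserved into $\mJ(q)$. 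The one new situation, varying $v$ along the cube, is exactly the case analyzed in \Proposition{CK-cube-maps-descend}: a type III circle either persists or splits into two circles each passing through a platform, one of which must carry label $X$ by the comultiplication rule. The main obstacle is purely bookkeeping: one must match the combinatorial data of Property~\ref{item:Psi-mmorph} (genera of cobordism components, labels on incoming/outgoing circles) against the partition of $\Sigma$ imposed by the platforms, but once that dictionary is in place the case analysis is precisely the one already carried out in \Lemma{CK-ideal}, \Proposition{CK-Ideal}, and \Proposition{CK-cube-maps-descend}.
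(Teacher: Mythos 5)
Your proof is correct and takes essentially the same route as the paper, whose proof simply notes that the argument is the same as \Lemma{CK-ideal} and \Proposition{CK-Ideal} (with the cube-edge case of \Proposition{CK-cube-maps-descend} folded in), Property~\ref{item:Psi-mmorph} serving exactly as the dictionary you describe. The paper also records a shortcut worth knowing: since Property~\ref{item:Psi-mmorph} shows the fiber $s^{-1}(c_1,\dots,c_\ell)\cap t^{-1}(d)$ is nonempty only when $d$ appears with nonzero coefficient in the combinatorial product, and $\Forget$ sends every morphism to a matrix with non-negative entries, the absorbing property follows immediately from the already-established ideal/submodule statements without redoing the label-tracing case analysis.
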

\begin{proof}
  The proof is the same as the proofs of Lemma~\ref{lem:CK-ideal} and
  Proposition~\ref{prop:CK-Ideal}. Alternatively, we can deduce this
  lemma from the combinatorial case. By
  Property~\ref{item:Psi-mmorph}, for the multi-functor $\Psi$,
  $s^{-1}(c_1,\dots,c_\ell)\cap t^{-1}(d)=\emptyset$ unless $d$ appears
  with non-zero coefficient in the product $c_1\cdot \cdots\cdot c_\ell$
  in the combinatorial Khovanov bimodule. So, it follows from the fact
  that $\CKTidealB{n}{\bk}$ is an ideal and
  $\CKTfuncIdealB{\Tan}{\bh}{\bk}$ is a submodule
  (Lemma~\ref{lem:CK-ideal} and Proposition~\ref{prop:CK-Ideal}), in
  conjunction with the fact that the functor
  $\Forget\from\BurnsideCat\to\GrAbelianGroups$ takes each morphism to a
  non-negative matrix, that $\mI$ and $\mJ$ are absorbing subfunctors.
\end{proof}

By Lemma~\ref{lem:mI-absorbing}, there are quotient functors
\begin{align*}
  \mAinvB{n}{\bk}&=\bigl(\mHinvResB{n}{\bk}/\mI\bigr)\co \mCKAshapeB{n}{\bk}\to\mBurnside\\
  \mCKTinvBtemp{\Tan}{\bh}{\bk}&=\bigl(\mTinvResB{T}{\bh}{\bk}/\mJ\bigr)\co \CCat{N}\ttimes \mCKTshapeB{m}{n}{\bh}{\bk}\to\mBurnside.
\end{align*}

Now, we fiddle around a little to avoid having to keep track of
natural isomorphisms between functors.
There is a canonical isomorphism of multicategories
\[
  \gamma=\iota^{h_2-h}\co \CCat{N}\ttimes\mCKTshape{m}{n}{m-h}{h}{n-k}{k}\to \CCat{N}\ttimes\mCKTshape{m}{n}{h_1}{h_2}{k_1}{k_2}
\]
where $m-2h=h_1-h_2$ and $n-2k=k_1-k_2$. We would like to say that
$\mCKTinvBtemp{\Tan}{\bh}{\bk}\circ
\gamma|_{\mCKAshape{m}{m-h}{h}}=\mAinv{m}{m-h}{h}$ and 
$\mCKTinvBtemp{\Tan}{\bh}{\bk}\circ
\gamma|_{\mCKAshape{n}{n-k}{k}}=\mAinv{n}{n-k}{k}$. This is not quite
true, but is true up to the following notion of natural isomorphism:

\begin{definition}\cite[Definition 3.22]{LLS-kh-tangles}
  Let $\Cat$ be a multicategory enriched in groupoids and
  $F,G\co\Cat\to\mBurnside$ multifunctors.  A \emph{natural
    isomorphism} $\eta\co F\to G$ consists of:
  \begin{itemize}[leftmargin=*]
  \item For each object $x\in\Cat$ a bijection of graded sets
    $\eta_x\co F(x)\to G(x)$, and
  \item For each multimorphism $f\in\Cat(x_1,\dots,x_n;y)$, a bijection of
    graded sets $\eta_f\co F(f)\to G(f)$
  \end{itemize}
  such that:
  \begin{enumerate}[leftmargin=*]
  \item For any objects $x_1,\dots,x_n,y\in\Cat$ and multimorphism
    $f\in\Cat(x_1,\dots,x_n;y)$, the following diagram commutes:
    \[
      \begin{tikzpicture}
        \node at (0,0) (Ff) {$F(f)$};
        \node at (-2,-1) (Fx) {$F(x_1)\times\cdots\times F(x_n)$};
        \node at (2,-1) (Fy) {$F(y)$};
        \node at (-2,-2) (Gx) {$G(x_1)\times\cdots\times G(x_n)$};
        \node at (2,-2) (Gy) {$G(y)$};
        \node at (0,-3) (Gf) {$G(f)$};
        \draw[->] (Ff) to (Fx);
        \draw[->] (Ff) to (Fy);
        \draw[->] (Fx) to node[left]{$\eta_{x_1}\times\cdots\times\eta_{x_n}$} (Gx);
        \draw[->] (Fy) to node[right]{$\eta_y$} (Gy);
        \draw[->] (Gf) to (Gx);
        \draw[->] (Gf) to (Gy);
        \draw[->] (Ff) to node[right]{$\eta_f$} (Gf);
      \end{tikzpicture}
    \]
  \item For every $f,g\in\Cat(x_1,\dots,x_n;y)$ and $\phi\in\Hom(f,g)$,
    \[
      \eta_g\circ F(\phi)=G(\phi)\circ \eta_f\co F(f)\to G(g).
    \]
  \item For every $g\in\Cat(y_1,\dots,y_n;z)$ and $f_1,\dots,f_n$ with $f_i\in\Cat(x_{i,1},\dots,x_{i,m_i},y_i)$,
    \[
      \eta_{g\circ(f_1,\dots,f_n)}=\eta_g\circ(\eta_{f_1},\dots,\eta_{f_n}).
    \]
  \end{enumerate}
\end{definition}

\begin{lemma}\label{lem:iota-spec-alg-map}
  If $\imath$ denotes the canonical isomorphism
  $\mCKAshape{m}{h_1}{h_2}\stackrel{\cong}{\longrightarrow}
  \mCKAshape{m}{h_1+1}{h_2+1}$ induced by
  $\imath\co\Crossingless{m+h_1+h_2}\into\Crossingless{m+h_1+h_2+2}$
  then there is a natural isomorphism between $\mAinv{m}{h_1}{h_2}$
  and $\mAinv{m}{h_1+1}{h_2+1}\circ\imath$. Further, this isomorphism
  sends $\mI(o)\subset\mHinvRes{m}{h_1}{h_2}(o)$ bijectively to
  $\mI(\imath(o))\subset \mHinvRes{m}{h_1+1}{h_2+1}(\imath(o))$ for
  each $o\in\Ob(\mCKAshape{m}{h_1}{h_2})$.  Thus,
  $\mCKTinvBtemp{\Tan}{\bh}{\bk}\circ \gamma|_{\mCKAshape{m}{m-h}{h}}$
  and $\mAinv{m}{m-h}{h}$ (respectively
  $\mCKTinvBtemp{\Tan}{\bh}{\bk}\circ \gamma|_{\mCKAshape{n}{n-k}{k}}$
  and $\mAinv{n}{n-k}{k}$)
  are naturally isomorphic.
\end{lemma}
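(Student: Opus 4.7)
The plan is to describe explicitly how capping off via $\imath$ alters the underlying Khovanov diagrams, and then to transport this through the Burnside correspondences using Property~\ref{item:Psi-mmorph}.

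First, I would observe that $\imath(a_1)\Wmirror{\imath(a_2)}$ differs from $a_1\Wmirror{a_2}$ by a single additional circle $Z$, passing through position $1$ and position $m+h_1+h_2+2$; since $a_1,a_2\in\rCrossinglessB{m}{\bh}$, these are the only points of $Z$ on the two new platforms, so by \Lemma{type-III}, $Z$ is a type II circle and the type II/III classification of every other circle is unchanged. Define the candidate $\eta$ on objects by sending a Khovanov generator $x$ for $a_1\Wmirror{a_2}$ to the generator of $\imath(a_1)\Wmirror{\imath(a_2)}$ that matches $x$ on the original circles and labels $Z$ by $1$. Then $x\in\mI(a_1,a_2)$ iff some original type II/III circle is labeled $X$ iff the same holds for $\eta(x)$, so $\eta$ carries $\mI$ into $\mI$; and any generator of $\imath(a_1)\Wmirror{\imath(a_2)}$ labeling $Z$ by $X$ already lies in $\mI$ since $Z$ is type II. Thus $\eta$ is an injection into $\mHinvRes{m}{h_1+1}{h_2+1}(\imath(a_1),\imath(a_2))$ whose complement is contained in $\mI$ and which carries $\mI$ bijectively onto $\mI(\imath(a_1),\imath(a_2))$ modulo the complement, so it descends to a bijection $\mAinv{m}{h_1}{h_2}(a_1,a_2)\to\mAinv{m}{h_1+1}{h_2+1}(\imath(a_1),\imath(a_2))$.

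Next I would extend $\eta$ to multimorphisms. The canonical cobordism underlying $\imath(f)$ splits as the disjoint union of the canonical cobordism for $f$ and a single genus-$0$ cobordism $\Sigma_Z$ between the copies of $Z$ at the inputs and the copy of $Z$ at the output. Setting $\eta_f$ to ``label every copy of $Z$ by $1$'' lands inside the correspondence by case (a) of Property~\ref{item:Psi-mmorph}. Conversely, any correspondence element for $\imath(f)$ labeling some copy of $Z$ by $X$ must, by case (b) of the same property, have either a source or target with a type II circle (namely $Z$) labeled $X$, and hence lies in $\mI$. Therefore $\eta_f$ descends to a bijection of multimorphism correspondences at the $\mAinv$ level. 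Compatibility of $\eta$ with the $2$-morphisms of the groupoid-enriched thickening and with multi-composition is routine, since identity cylinders over $Z$ are preserved by reparametrization and stacking of cobordisms.

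For the ``thus'' conclusion, the identity $m-2h=h_1-h_2$ together with $h_1+h_2\geq m$ forces $h_1=(m-h)+s$ and $h_2=h+s$ with $s=(h_1+h_2-m)/2\geq 0$, and by inspection $\gamma|_{\mCKAshape{m}{m-h}{h}}$ is precisely the iterate $\imath^s$. Iterating the natural isomorphism just constructed $s$ times yields $\mAinv{m}{h_1}{h_2}\circ\imath^s\cong\mAinv{m}{m-h}{h}$, which, combined with $\mCKTinvBtemp{\Tan}{\bh}{\bk}|_{\mCKAshape{m}{h_1}{h_2}}=\mAinv{m}{h_1}{h_2}$, gives the desired natural isomorphism on the $m$-side; the $n$-side is identical.

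The main obstacle will be the multimorphism step: confirming that no ``stray'' Burnside correspondence elements for $\imath(f)$ survive the quotient by $\mI$ beyond those coming from labeling each $Z$ by $1$. This rules out cases where $X$ appears on $Z$ for some but not all ends of $\Sigma_Z$, and depends crucially on case (b) of Property~\ref{item:Psi-mmorph} forcing any $X$ label to be matched by a compatible $X$ on either a source or target component.
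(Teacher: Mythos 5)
Your construction is the right one, and it matches the paper in the only sense possible: the paper's proof of this lemma is literally ``immediate from the definitions,'' so your write-up is a correct fleshing-out of the same unwinding (new circle $Z$ through the two new platform points, necessarily type II; label it $1$ to identify quotient generators; the extra component of the canonical cobordism over the $Z$'s is a disjoint genus-zero piece, so the quotient correspondences match). Two points deserve tightening. First, on the $m$-side the asserted equality $\mCKTinvBtemp{\Tan}{\bh}{\bk}|_{\mCKAshape{m}{h_1}{h_2}}=\mAinv{m}{h_1}{h_2}$ is not literal: by definition $\mTinvResB{\Tan}{\bh}{\bk}=\mTinvResB{\Tan}{\bk}{\bk}\circ\imath^{k_1-h_1}$, so on objects $(a_1,a_2)$ with $a_i\in\rCrossinglessB{m}{\bh}$ the restriction produces Khovanov generators of $\imath^{k_1-h_1}(a_1)\Wmirror{\imath^{k_1-h_1}(a_2)}$, i.e.\ it is $\mAinv{m}{k_1}{k_2}\circ\imath^{k_1-h_1}$ on that subcategory, only naturally isomorphic to $\mAinv{m}{h_1}{h_2}$ by further iterations of the very isomorphism you built; so one iterates $k_1-(m-h)$ times rather than $s$ times (on the $n$-side, where no extra capping occurs, your literal-equality step is fine). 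The conclusion is unaffected, but as stated the bookkeeping is off. Second, Property~\ref{item:Psi-mmorph} only asserts that certain fibers of the correspondence are empty; it does not by itself produce the bijection $\eta_f$ of correspondence sets. For that you need that the divided cobordism underlying $\imath(f)$ is the disjoint union of the one for $f$ with the genus-zero component over the $Z$'s, and that by the construction of the Burnside functor this extra component contributes exactly one correspondence element when all copies of $Z$ are labeled $1$; citing the disjoint-union behavior of the Khovanov--Burnside functor (rather than ($\Psi$-3) alone) closes that step, after which absorbing the elements with some $Z$ labeled $X$ into $\mI$, as you do, finishes the argument.
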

\begin{proof}
  This is immediate from the definitions.
\end{proof}

\begin{definition}\label{def:make-restrict-strict}
  Define
  $\mCKTinvB{\Tan}{\bh}{\bk}\co \CCat{N}\ttimes
  \mCKTshape{m}{n}{m-h}{h}{n-k}{k}\to\mBurnside$ as follows. Let
  $\eta$ (respectively $\xi$) be the natural isomorphism from
  $\mAinv{m}{m-h}{h}$ to
  $\mCKTinvBtemp{\Tan}{\bh}{\bk}\circ\gamma|_{\mCKAshape{m}{m-h}{h}}$
  (respectively $\mAinv{n}{n-k}{k}$ to
  $\mCKTinvBtemp{\Tan}{\bh}{\bk}\circ\gamma|_{\mCKAshape{n}{n-k}{k}}$)
  from Lemma~\ref{lem:iota-spec-alg-map}.
  \begin{itemize}[leftmargin=*]
  \item For objects $x\in\Ob(\mCKAshape{m}{m-h}{h})$, define
    $\mCKTinvB{\Tan}{\bh}{\bk}(x)=\mAinv{m}{m-h}{h}(x)$. 
  \item For objects $x\in\Ob(\mCKAshape{n}{n-k}{k})$, define
    $\mCKTinvB{\Tan}{\bh}{\bk}(x)=\mAinv{n}{n-k}{k}(x)$.
  \item For all other objects $x$, define
    $\mCKTinvB{\Tan}{\bh}{\bk}(x)=\mCKTinvBtemp{\Tan}{\bh}{\bk}(\gamma(x))$.
  \item Given objects
    $x_1,\dots,x_n,y\in\Ob(\CCat{N}\ttimes\mCKTshape{m}{n}{m-h}{h}{n-k}{k})$ and a basic
    multimorphism (see~\cite[\S2.4]{LLS-kh-tangles})
    $f\in \Hom_{\CCat{N}\ttimes\mCKTshape{m}{n}{m-h}{h}{n-k}{k}}(x_1,\dots,x_n;y)$:
    \begin{itemize}[leftmargin=*]
    \item if all the $x_i$ (and hence $y$) are in
      $\Ob(\mCKAshape{m}{m-h}{h})$ define
      $
        \mCKTinvB{\Tan}{\bh}{\bk}(f)=\mAinv{m}{m-h}{h}(f),
      $
    \item if all the $x_i$ (and hence $y$) are in $\Ob(\mCKAshape{n}{n-k}{k})$ define
      $
        \mCKTinvB{\Tan}{\bh}{\bk}(f)=\mAinv{n}{n-k}{k}(f),
      $
    \item and otherwise, if $x_1,\dots,x_i\in\Ob(\mCKAshape{m}{m-h}{h})$ and $x_{i+2},\dots,x_n\in \Ob(\mCKAshape{n}{n-k}{k})$,
    define
    $
      \mCKTinvB{\Tan}{\bh}{\bk}(f)=\mCKTinvBtemp{\Tan}{\bh}{\bk}(\gamma(f)),
    $
    viewed as a correspondence from
    \[
      \mAinv{m}{m-h}{h}(x_{1})\times\cdots\times \mAinv{m}{m-h}{h}(x_{i})
      \times\mCKTinvBtemp{\Tan}{\bh}{\bk}(\gamma(x_{i+1}))\times
      \mAinv{n}{n-k}{k}(x_{i+2})\times\cdots\times \mAinv{n}{n-k}{k}(x_{n})
    \]
    to $\mCKTinvBtemp{\Tan}{\bh}{\bk}(\gamma(y))$ by composing the
    source map with
    $\eta_{x_1}^{-1}\times\cdots\times\eta_{x_i}^{-1}\times\Id\times\xi_{x_{i+1}}^{-1}\times\cdots\times\xi_{x_n}^{-1}$.
    \end{itemize}    
  \item On general multimorphisms, $\mCKTinvB{\Tan}{\bh}{\bk}$ is the
    composition of its values on basic multimorphisms.
  \item On $2$-morphisms, $\mCKTinvB{\Tan}{\bh}{\bk}$ is induced by
    the values $\mCKTinvBtemp{\Tan}{\bh}{\bk}$, $\mAinvB{m}{\bh}$,
    $\mAinvB{n}{\bk}$, and the $\eta_f$ and $\xi_f$.
  \end{itemize}
\end{definition}

Consider the stable functor $(\mCKTinvB{\Tan}{\bh}{\bk},N_+)$ (where $N_+$ is the
number of positive crossings of $T$). Define
\begin{align*}
  \CKTSpecCat{n}{k_1}{k_2}&=\CRealize{\mAinv{n}{k_1}{k_2}} &
  \CKTSpecBimB{\Tan}{\bh}{\bk}&=\CRealize{\mCKTinvB{\Tan}{\bh}{\bk}}.
\end{align*}
\begin{lemma}
  The spectra $\CKTSpecBimB{\Tan}{\bh}{\bk}(a,b)$ form a spectral
  bimodule over $\CKTSpecCatB{m}{\bh}$ and $\CKTSpecCatB{n}{\bk}$.
\end{lemma}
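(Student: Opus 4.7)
The plan is to feed the stable functor $(\mCKTinvB{\Tan}{\bh}{\bk}, N_+)$ into the general realization machinery recalled in Section~\ref{sec:Hn-spec-review}, taking $\Cat = \mCKTshape{m}{n}{m-h}{h}{n-k}{k}$ together with the two submulticategories $\mCKAshape{m}{m-h}{h}$ and $\mCKAshape{n}{n-k}{k}$. That machinery automatically produces a spectral bimodule $\CRealize{\mCKTinvB{\Tan}{\bh}{\bk}}$ over the spectral categories obtained by realizing the restrictions of $\mCKTinvB{\Tan}{\bh}{\bk}$ to those two submulticategories, so the entire content of the lemma is to identify those two spectral categories with $\CKTSpecCatB{m}{\bh}$ and $\CKTSpecCatB{n}{\bk}$.

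First, I would verify that $\mCKTinvB{\Tan}{\bh}{\bk}$ as defined in Definition~\ref{def:make-restrict-strict} really is a strict multifunctor. Its values on objects and on basic multimorphisms are prescribed by a case split depending on whether the objects involved lie in $\mCKAshape{m}{m-h}{h}$, $\mCKAshape{n}{n-k}{k}$, or neither. Consistency with composition reduces, case by case, to the fact that $\mCKTinvBtemp{\Tan}{\bh}{\bk}\circ\gamma$ is a multifunctor, combined with the naturality axioms for the isomorphisms $\eta$ and $\xi$ from Lemma~\ref{lem:iota-spec-alg-map}. Concretely, whenever a composition stacks a ``pure algebra'' basic multimorphism before a ``mixed'' one, the insertion of $\eta^{-1}$ (or $\xi^{-1}$) at the source of the mixed factor is exactly cancelled by the matching $\eta$ (or $\xi$) coming out of the algebra factor, since on the algebra side the values are strictly those of $\mAinvB{m}{\bh}$ or $\mAinvB{n}{\bk}$. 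The clauses governing how $\eta$, $\xi$ behave under $2$-morphisms and composition handle compatibility in those directions.

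Second, the key observation is that by construction $\mCKTinvB{\Tan}{\bh}{\bk}$ restricts \emph{strictly}, not merely up to natural isomorphism, to $\mAinvB{m}{\bh}$ on $\mCKAshapeB{m}{\bh}$ and to $\mAinvB{n}{\bk}$ on $\mCKAshapeB{n}{\bk}$; this is literally what the first two bullets on objects and the first two sub-cases on basic multimorphisms in Definition~\ref{def:make-restrict-strict} say. This was exactly the point of the ``fiddling'' paragraph preceding that definition: we absorbed the natural isomorphisms supplied by Lemma~\ref{lem:iota-spec-alg-map} into the values on mixed multimorphisms precisely so that the algebra restrictions would match on the nose. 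Consequently, realizing yields $\CRealize{\mCKTinvB{\Tan}{\bh}{\bk}\big|_{\mCKAshapeB{m}{\bh}}} = \CRealize{\mAinvB{m}{\bh}} = \CKTSpecCatB{m}{\bh}$, and similarly for $\bk$, by definition of $\CKTSpecCatB{\cdot}{\cdot}$.

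Applying the realization framework of Section~\ref{sec:Hn-spec-review} now produces $\CKTSpecBimB{\Tan}{\bh}{\bk}$ as a spectral bimodule over $\CKTSpecCatB{m}{\bh}$ and $\CKTSpecCatB{n}{\bk}$, proving the lemma. The only nontrivial step is the multifunctoriality check in the first paragraph, but this is a purely formal diagram-chase using the natural isomorphism axioms; everything else is by construction.
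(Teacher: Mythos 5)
Your overall strategy (reduce to identifying the two spectral categories) is the right one, and your use of the strictness built into Definition~\ref{def:make-restrict-strict} is correct: by construction $\mCKTinvB{\Tan}{\bh}{\bk}$ restricts on the nose, at the Burnside level, to $\mAinvB{m}{\bh}$ and $\mAinvB{n}{\bk}$, which is exactly what Lemma~\ref{lem:iota-spec-alg-map} and the ``fiddling'' were for. (Your first paragraph, re-verifying multifunctoriality of $\mCKTinvB{\Tan}{\bh}{\bk}$, is harmless but belongs to the definition rather than to this lemma.)

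The gap is in the step ``realizing yields $\CRealize{\mCKTinvB{\Tan}{\bh}{\bk}|_{\mCKAshapeB{m}{\bh}}}=\CRealize{\mAinvB{m}{\bh}}=\CKTSpecCatB{m}{\bh}$.'' The bimodule $\CKTSpecBimB{\Tan}{\bh}{\bk}=\CRealize{\mCKTinvB{\Tan}{\bh}{\bk}}$ is produced by applying Elmendorf--Mandell rectification to $K\circ\mCKTinvB{\Tan}{\bh}{\bk}$ on all of $\CCat{N}\ttimes\mCKTshapeB{m}{n}{\bh}{\bk}$, and the machinery of Section~\ref{sec:Hn-spec-review} makes it a bimodule over the spectral categories obtained by \emph{restricting that rectified functor} to the algebra subcategories. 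On the other hand, $\CKTSpecCatB{m}{\bh}$ and $\CKTSpecCatB{n}{\bk}$ are defined by rectifying $K\circ\mAinvB{m}{\bh}$ and $K\circ\mAinvB{n}{\bk}$ on the algebra shape multicategories by themselves. Rectification is a global construction, so strict agreement of the underlying Burnside functors on the subcategories does not by itself give ``rectify then restrict'' $=$ ``restrict then rectify''; your displayed equality silently conflates the two. The paper's proof identifies this commutation as the entire remaining content of the lemma and settles it by noting that $\mCKAshapeB{n}{\bk}\subset\mCKTshapeB{m}{n}{\bh}{\bk}$ (and likewise $\mCKAshapeB{m}{\bh}$) admits no multimorphisms in from outside, i.e.\ is \emph{blockaded}, and invoking the corresponding lemma from the earlier paper that rectification commutes with restriction to blockaded subcategories. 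You need this (or an equivalent argument) to conclude; otherwise you have only produced a bimodule over spectral categories not yet identified with $\CKTSpecCatB{m}{\bh}$ and $\CKTSpecCatB{n}{\bk}$.
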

\begin{proof}
  In view of Lemma~\ref{lem:iota-spec-alg-map} and the discussion in
  Section~\ref{sec:shape-multicats}, all that remains is to verify
  that rectifying $K\circ \mCKTinvB{\Tan}{\bh}{\bk}$ and then
  restricting to the sub-multicategory $\mCKAshapeSB{n}{\bk}$ is the
  same as first restricting to $\mCKAshapeB{n}{\bk}$ and then
  rectifying. This follows from the fact that
  $\mCKAshapeB{n}{\bk}\subset \mCKTshapeB{m}{n}{\bh}{\bk}$ has no
  morphisms in (i.e., is \emph{blockaded} in the language
  of~\cite[Proposition 2.39]{LLS-kh-tangles})~\cite[Lemma
  2.44]{LLS-kh-tangles}.
\end{proof}
Let
\[
  \CKTSpecCatBig{n}=\coprod_{k=0}^n\CKTSpecCat{n}{n-k}{k}.
\]
Let $\CKTSpecBimBig{\Tan}$ denote the bimodule over
$\CKTSpecCatBig{m}$ and $\CKTSpecCatBig{n}$ induced by the various
bimodules $\CKTSpecBim{\Tan}{m-h}{h}{n-k}{k}$.

Composing the singular chain functor $C_*$ with
$\CKTSpecCatB{n}{\bk}$ gives a differential graded category
(category enriched in chain complexes), and composing $C_*$ with
$\CKTSpecBimB{\Tan}{\bh}{\bk}$ gives a bimodule over
$C_*\CKTSpecCatB{m}{\bh}$ and
$C_*\CKTSpecCatB{n}{\bk}$. Since $\CKTSpecCatB{n}{\bk}$
has finitely many objects, by taking the direct sum of the morphism
spaces we can view $C_*\CKTSpecCatB{n}{\bk}$ as a differential
graded algebra and $C_*\CKTSpecBimB{\Tan}{\bh}{\bk}$ as a differential
graded module over it.

The three main theorems are:

\begin{theorem}\label{thm:de-spectrify}
  There is a quasi-isomorphism $C_*\CKTSpecCatB{n}{\bk}\simeq \CKTalgB{n}{\bk}$ and, for any tangle $\Tan$, a quasi-isomorphism $C_*\CKTSpecBimB{\Tan}{\bh}{\bk}\simeq \CKTfuncB{\Tan}{\bh}{\bk}$ intertwining the module structures in the obvious sense.
\end{theorem}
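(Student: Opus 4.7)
The strategy is to reduce the theorem to the corresponding arc-algebra statement (Property~\ref{item:old-T-prop-4} from our previous work) by showing that the quotient by an absorbing subfunctor commutes with the comparison $C_* K \simeq \Forget$. By construction $\mAinvB{n}{\bk} = \mHinvResB{n}{\bk}/\mI$ with $\mI$ absorbing (Lemma~\ref{lem:mI-absorbing}), so for each $(a_1,a_2)\in\mCKAshapeB{n}{\bk}$ the Burnside-set inclusion $\mI(a_1,a_2)\subset\mHinvResB{n}{\bk}(a_1,a_2)$ becomes, after applying $K$, a wedge inclusion of spectra whose cofiber is the wedge of spheres indexed by $\mAinvB{n}{\bk}(a_1,a_2)$. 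The absorbing condition on $\mI$ is exactly what makes these wedge inclusions fit together compatibly over all multimorphisms, so after rectification one obtains a cofibration sequence of spectral categories
\[
\CRealize{\mI}\to \CRealize{\mHinvResB{n}{\bk}}\to \CKTSpecCatB{n}{\bk}.
\]

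Applying $C_*$ to this sequence yields a distinguished triangle of dg categories. Using the commuting diagram in \Section{burnside-multicat} and Property~\ref{item:old-T-prop-4}, the middle term is quasi-isomorphic to $\KTalg{n+k_1+k_2}$ restricted to $\rCrossinglessB{n}{\bk}$, which is exactly $\CKTalgTB{n}{\bk}$, while the first term is quasi-isomorphic to $\Forget\circ \mI=\CKTidealB{n}{\bk}$ by the definition of $\mI$. Hence the third term satisfies
\[
C_*\CKTSpecCatB{n}{\bk}\simeq \CKTalgTB{n}{\bk}/\CKTidealB{n}{\bk}=\CKTalgB{n}{\bk},
\]
as required. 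For the bimodule $\CKTSpecBimB{\Tan}{\bh}{\bk}$ I run the same argument with $\mJ\subset \mTinvResB{\Tan}{\bh}{\bk}$ in place of $\mI$. The iterated mapping cone over $\CCat{N}$ and the homological shift that turn $\mCKTinvB{\Tan}{\bh}{\bk}$ into a spectral bimodule both commute with $C_*$ up to quasi-isomorphism, and the minor natural-isomorphism adjustment of Definition~\ref{def:make-restrict-strict} has no effect on underlying chain complexes. The intertwining of module structures then follows from the naturality of $C_*K\simeq\Forget$ applied to the multimorphisms in $\mCKTshapeB{m}{n}{\bh}{\bk}$ that encode the bimodule action, combined with the analogous arc-algebra intertwining from Property~\ref{item:old-T-prop-4}.

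The main obstacle I anticipate is making precise the claim that $\CRealize{\mI}\to \CRealize{\mHinvResB{n}{\bk}}\to \CRealize{\mAinvB{n}{\bk}}$ is genuinely a cofibration sequence of spectral categories (and analogously for the bimodule version), not just a pointwise cofibration on morphism spectra. The key input is that Elmendorf-Mandell $K$-theory applied to $\Sets/X$ gives a wedge of spheres indexed by $X$, so a Burnside-set inclusion $\mI(a)\subset F(a)$ becomes a cofibration of spectra with the expected cofiber; the absorbing condition is designed precisely to guarantee that these pointwise data assemble into a short exact sequence of enriched multifunctors and that rectification preserves the cofibration structure. Once this compatibility is in hand, the rest of the argument is essentially bookkeeping built on top of Property~\ref{item:old-T-prop-4}.
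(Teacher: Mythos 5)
Your proposal reaches the right statement, but by a more roundabout route than the paper, and the extra machinery it leans on is exactly the part that is not available off the shelf. The paper's proof is the direct one-line reduction: the argument of \cite[Proposition 4.2]{LLS-kh-tangles} (Property~\ref{item:old-T-prop-4}) is applied verbatim to the quotient multifunctors $\mAinvB{n}{\bk}=\mHinvResB{n}{\bk}/\mI$ and $\mCKTinvB{\Tan}{\bh}{\bk}$. The only new observation needed is that $\Forget$ of the quotient functor is the platform algebra \emph{on the nose}: since $\CKTidealB{n}{\bk}(a_1,a_2)$ is spanned by a subset of the Khovanov generators, $\CKTalgB{n}{\bk}(a_1,a_2)$ is free on the complementary set $\mAinvB{n}{\bk}(a_1,a_2)$ and its structure constants are precisely the point counts of the quotient correspondences; likewise $\Total{\Forget\circ\mCKTinvB{\Tan}{\bh}{\bk},N_+}=\CKTfuncB{\Tan}{\bh}{\bk}$. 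Given that, the comparison $C_*\circ K\simeq\Forget$ and its compatibility with rectification and iterated mapping cones---the actual content of the arc-algebra proof---yield the theorem directly, with no need to realize the ideal spectrally. Your opening sentence (``the quotient by an absorbing subfunctor commutes with the comparison'') is in effect this observation, so the shortest correct version of your argument is obtained by deleting the cofiber-sequence step entirely.

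The step you flag as the main obstacle is indeed a genuine soft spot as stated. The sequence $\CRealize{\mI}\to\CRealize{\mHinvResB{n}{\bk}}\to\CKTSpecCatB{n}{\bk}$ is not a ``cofibration sequence of spectral categories'': $\mI$ assigns the empty set to every $0$-input (unit) multimorphism of $\mCKAshapeB{n}{\bk}$, because the idempotent generator labeling every circle of $a\Wmirror{a}$ by $1$ never lies in $\CKTidealB{n}{\bk}(a,a)$ for $a\in\rCrossinglessB{n}{\bk}$; so $\CRealize{\mI}$ has no units and is at best a bimodule over the middle term. Moreover, the paper's machinery only produces cofibration sequences for \emph{insular} subfunctors (Definition~\ref{def:insular-subfunctor}), where the algebra objects are untouched; for an \emph{absorbing} subfunctor such as $\mI$ or $\mJ$ (Lemma~\ref{lem:mI-absorbing}) the compatibility of the levelwise wedge-summand inclusions with rectification would have to be proved from scratch, and ``distinguished triangle of dg categories'' would need to be replaced by a precise object-pairwise statement compatible with composition. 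None of this is false in spirit---levelwise everything is an inclusion of wedges of spheres, and on homology one does get the short exact sequence you want---but for this theorem the detour buys nothing, since the identification with the platform algebra already holds at the Burnside level, before any realization.
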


\begin{theorem}\label{thm:invariance}
  If $\Tan_1$ and $\Tan_2$ are equivalent tangles then $\CKTSpecBimB{\Tan_1}{\bh}{\bk}$ and $\CKTSpecBimB{\Tan_2}{\bh}{\bk}$ are weakly equivalent spectral modules.
\end{theorem}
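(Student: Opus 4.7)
The plan is to establish a stable equivalence of multifunctors
\[
  \mCKTinvB{\Tan_1}{\bh}{\bk}\simeq \mCKTinvB{\Tan_2}{\bh}{\bk}
\]
and then invoke \Lemma{equiv-gives-equiv}. First, a direct spectral analogue of \Lemma{restrict-invt} (essentially built into the definitions via $\mTinvResB{\Tan}{\bh}{\bk}=\mTinvResB{\Tan}{\bk}{\bk}\circ\imath^{k_1-h_1}$ and \Lemma{iota-spec-alg-map}) reduces me to the case $\bh=\bk$. Since equivalent oriented tangles are related by ambient isotopy and a finite sequence of Reidemeister moves, and since R1, R2, and braid-like R3 suffice, it then suffices to verify invariance under each such move separately.

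The strategy for each Reidemeister move is to transport the spectral arc algebra argument from~\cite{LLS-kh-tangles} across the Chen--Khovanov quotient. The invariance of $\KTSpecBim{\ou{\Tan}}$ proved there proceeds, at each move, by producing a short chain of insular subfunctors of $\mTinv{\ou{\Tan}}$ and applying the three criteria at the end of \Section{equiv-funcs}. The key structural fact I would exploit is that each such insular subfunctor $G$ is defined purely by restricting the labels of \emph{closed} circles in the resolutions of $\ou{\Tan}$ local to the Reidemeister region; these are the spectral lifts of the subcomplexes and quotient complexes $C_1,\dots,C_4$ from the proof of \Theorem{comb-invariance}. Restricting each $G$ to $\CCat{N}\ttimes\mCKTshapeB{m}{n}{\bk}{\bk}$ gives an insular subfunctor of $\mTinvResB{\Tan}{\bk}{\bk}$, and \Lemma{insular-absorbing} then descends it to an insular subfunctor
\[
  \overline{G}=G\setminus \mJ\;\subset\;\mTinvResB{\Tan}{\bk}{\bk}/\mJ\;=\;\mCKTinvBtemp{\Tan}{\bk}{\bk}.
\]
The same chain of subfunctors, now in the quotient, is the candidate chain of stable equivalences connecting $\mCKTinvB{\Tan_1}{\bk}{\bk}$ and $\mCKTinvB{\Tan_2}{\bk}{\bk}$.

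The main obstacle will be verifying that the contractibility statements needed to activate the criteria at the end of \Section{equiv-funcs} survive the descent. At each step of the arc algebra argument, contractibility of $\CRealize{G}$ or $\CRealize{\mTinv{\ou{\Tan}}/G}$ ultimately comes from a local matching of Khovanov generators produced by one of two cancellation moves at the Reidemeister site: a merge along a circle labeled $1$, or a split off a circle labeled $X$. As noted in the final paragraph of the proof of \Theorem{comb-invariance}, neither operation alters the labels of the remaining components, nor which endpoints of $\Tan$ lie on a common strand through the resolution; in particular the type II/III classification of the arcs of $a\ou{\Tan}\Wmirror{b}$ and the labels on any platform-crossing circles are unaffected. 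Consequently, a matched pair of Khovanov generators in $G$ lies either entirely inside $\mJ$ or entirely outside it, so the matching restricts to one on $\overline{G}$ and gives the required contractibility of $\CRealize{\overline{G}}$ (or of the descended quotient). What remains is to execute this bookkeeping explicitly for R1, R2, and braid-like R3, following for the latter the list of cancellations in~\cite[\S6]{RS-khovanov}, and to assemble the resulting sequence of simple stable equivalences into the desired equivalence $\mCKTinvB{\Tan_1}{\bk}{\bk}\simeq \mCKTinvB{\Tan_2}{\bk}{\bk}$.
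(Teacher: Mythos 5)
Your proposal is correct and follows essentially the same route as the paper: reduce to $\bh=\bk$ via the spectral restriction lemma, lift the subcomplexes $C_1,\dots,C_4$ from the combinatorial Reidemeister argument to insular subfunctors, restrict them to the platform shape multicategory, descend across the Chen--Khovanov quotient using Lemma~\ref{lem:insular-absorbing}, and conclude from the acyclicity established in Theorem~\ref{thm:comb-invariance} (the paper phrases the last step as a zig-zag of realizations that are homology isomorphisms, hence weak equivalences by Whitehead, while you phrase it through the stable-equivalence criteria and Lemma~\ref{lem:equiv-gives-equiv}, which is the same content). The only cosmetic difference is that the paper also notes that reordering crossings induces a cube automorphism, and it cites the already-proved acyclicity of the quotient complexes rather than re-running the matching argument you sketch.
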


\begin{theorem}\label{thm:pairing}
  If $\Tan_1$ is an $(m,n)$-tangle and $\Tan_2$ is an $(n,p)$-tangle then for
  any $h_1,h_2,k_1,k_2,\ell_1,\ell_2$ with
  \[
    h_1-h_2=k_1-k_2=\ell_1-\ell_2,\ 
    h_1+h_2\geq m,\ k_1+k_2\geq n,\ \text{and }\ \ell_1+\ell_2\geq p
  \]
  there is a weak equivalence of spectral bimodules
  \[
    \CKTSpecBimB{\Tan_1\Tan_2}{\bh}{\bl}\simeq \CKTSpecBimB{\Tan_1}{\bh}{\bk}\DTP_{\CKTSpecCat{n}{n-k}{k}}\CKTSpecBimB{\Tan_2}{\bk}{\bl},
  \]
  where the right side denotes the (derived) tensor product of spectral
  bimodules.
\end{theorem}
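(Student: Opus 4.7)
The plan is to mirror the proof of the combinatorial pairing result \Theorem{CK-pairing}, lifting each step to the multifunctorial framework of \Section{absorbing} and then to spectra via Elmendorf--Mandell rectification. As a preliminary reduction, \Lemma{iota-spec-alg-map}, applied also to the tangle shape multicategories by the same argument, produces weak equivalences $\CKTSpecBimB{\Tan}{\bh}{\bk}\simeq \CKTSpecBimB{\Tan}{\bk}{\bk}$ whenever $\bh$ comes from $\bk$ by the obvious $\imath$-shift, so it suffices to treat the case $\bh=\bk=\bl$, which I assume from now on.

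The heart of the argument is to construct a spectral gluing map at the multifunctor level, mimicking $\wt\Psi$ from \Lemma{glue-is-chain-map}. I would work in an enlarged shape multicategory containing $\mCKTshapeB{m}{n}{\bk}{\bk}$, $\mCKTshapeB{n}{p}{\bk}{\bk}$, and $\mCKTshapeB{m}{p}{\bk}{\bk}$ as full sub-multicategories, with additional multimorphisms corresponding to the canonical cobordisms $\Wmirror{b}\amalg b\to\Id$ that perform the composition $\ou{\Tan_1\Tan_2}=\ou{\Tan_1}\,\ou{\Tan_2}$; the multifunctor $\mTinv{-}$ extends over this enlarged category. The key verification is then that the absorbing subfunctors $\mJ$ for $\Tan_1$, $\Tan_2$, and $\Tan_1\Tan_2$ are compatible with these gluing multimorphisms in the sense of \Definition{absorbing-subfunctor}. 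By \Part{Psi-mmorph} and the fact that $\Forget$ sends every Burnside correspondence to a non-negative matrix (exactly as in the proof of \Lemma{mI-absorbing}), this multifunctor-level statement reduces to the combinatorial containment of submodules already proved in \Lemma{glue-is-chain-map}. Applying $K$-theory and rectifying then yields a map of spectral bimodules
\[
    \Psi\co \CKTSpecBimB{\Tan_1}{\bk}{\bk}\otimes_{\CKTSpecCat{n}{n-k}{k}}\CKTSpecBimB{\Tan_2}{\bk}{\bk}
    \to \CKTSpecBimB{\Tan_1\Tan_2}{\bk}{\bk},
\]
and it remains to identify the left-hand side with the derived tensor product and show $\Psi$ is a weak equivalence.

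For the identification with the derived tensor product, I would prove a spectral refinement of \Lemma{sweet}: for $\Tan$ a flat tangle and any $a\in\rCrossinglessB{m}{\bk}$, the multifunctor underlying $\CKTSpecBimB{\Tan}{\bk}{\bk}(a,\cdot)$ is on the nose an indexed wedge of copies of the free (representable) right module determined by the matching $a'$ of \Lemma{crT}, the indexing set being the Khovanov labelings of the $q$ closed circles of $a\ou{\Tan}$; this is visible directly from the Burnside-category description because the absorbing quotient by $\mJ$ removes precisely the generators that would prevent such a splitting. This gives cofibrancy as a right module over $\CKTSpecCatB{n}{\bk}$ (and symmetrically on the left), so the ordinary and derived tensor products agree, and $\Psi$ can be checked wedge-summand by wedge-summand using the explicit bijection of generators from the final paragraph of the proof of \Theorem{CK-pairing}. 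I expect the main obstacle to be in pinning down this spectral sweetness statement carefully: passing from a chain-level projectivity statement to a genuine wedge decomposition of multifunctors (and hence of their $K$-theory spectra) required real work in the arc-algebra case, and although the strategy in the Chen--Khovanov setting is identical, one must track that the absorbing quotient does not introduce extra higher simplices in the relevant Burnside-enriched morphism spaces that would obstruct the splitting.
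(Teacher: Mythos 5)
Your construction of the gluing map is the paper's construction: the enlarged shape multicategory you describe is the gluing shape multicategory (the paper restricts $\mGlueS{m+k_1+k_2}{n+k_1+k_2}{p+k_1+k_2}$ to $\mCKGlueS{m}{n}{p}$), the extension of $\mTinv{\ou{\Tan_1}}$, $\mTinv{\ou{\Tan_2}}$, $\mTinv{\ou{\Tan_1\Tan_2}}$ over it is the functor $F$ from the arc-algebra pairing theorem of \cite{LLS-kh-tangles}, and your compatibility check for the ideals is exactly the statement that $\mK$ (equal to $\mJ_{\Tan_1}$, $\mJ_{\Tan_2}$, $\mJ_{\Tan_1\Tan_2}$ on the three sub-shapes, which agree on overlaps) is an absorbing subfunctor, proved as you say from the proof of \Lemma{glue-is-chain-map} together with the non-negativity argument of \Lemma{mI-absorbing}. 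The reduction to $\bh=\bk=\bl$ is also the paper's (\Lemma{restrict-invt-spectra}).

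Where you diverge is the endgame, and that is where your proposal is incomplete. The paper never proves a spectral refinement of \Lemma{sweet}: the realization of $\wt{F}/\mK$ is packaged directly as a map out of the \emph{derived} tensor product $\CKTSpecBimB{\Tan_1}{\bk}{\bk}\DTP_{\CKTSpecCatB{n}{\bk}}\CKTSpecBimB{\Tan_2}{\bk}{\bk}$; one then takes singular chains, identifies the result with the gluing map $\CKTfuncB{\Tan_1}{\bk}{\bk}\DTP_{\CKTalgB{n}{\bk}}\CKTfuncB{\Tan_2}{\bk}{\bk}\to\CKTfuncB{\Tan_1\Tan_2}{\bk}{\bk}$ of \Theorem{CK-pairing} (as in \cite[Lemma 5.6]{LLS-kh-tangles}), and concludes by Whitehead's theorem, so all sweetness/projectivity input enters only at the chain level, inside \Theorem{CK-pairing}. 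Your route instead requires a point-set wedge decomposition of the spectral bimodule into representable right modules surviving $K$-theory and rectification, so that the ordinary tensor product over $\CKTSpecCatB{n}{\bk}$ computes the derived one, plus a summand-by-summand identification; you flag this yourself, and it is a genuine gap as written: the splitting of the Burnside functor must be shown to persist (at least up to equivalence of right modules) through rectification, the non-flat case needs a cube-filtration argument, and your ``wedge-summand by wedge-summand'' verification of a weak equivalence of spectra ultimately amounts to computing the induced map on homology---that is, to the chain-level comparison plus Whitehead that the paper uses anyway. Either fill in that spectral sweetness statement carefully, or, more efficiently, drop it: once your map is constructed, compare its singular chains with the combinatorial gluing map and invoke Whitehead.
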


\begin{proof}[Proof of Theorem~\ref{thm:de-spectrify}]
  The proof is the same as for the arc algebras~\cite[Proposition 4.2]{LLS-kh-tangles} and is left to the reader.
\end{proof}

\begin{lemma}
  \label{lem:restrict-invt-spectra}
  There are equivalences
  \[
    \CKTSpecBim{\Tan}{h_1}{h_2}{k_1}{k_2}\simeq
    \CKTSpecBim{\Tan}{h_1+1}{h_2+1}{k_1}{k_2} \simeq
    \CKTSpecBim{\Tan}{h_1}{h_2}{k_1+1}{k_2+1}
  \]
  of bimodules over $\CKTSpecCat{m}{m-h}{h}$ and
  $\CKTSpecCat{n}{n-k}{k}$.
\end{lemma}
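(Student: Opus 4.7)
The plan is to show that the identification of Lemma~\ref{lem:restrict-invt} lifts, on the nose, to the level of multifunctors $\CCat{N}\ttimes\mCKTshapeB{m}{n}{\bh}{\bk}\to\mBurnside$, so that one can invoke the realization machinery directly. First I would unwind the defining formula
\[
  \mTinvResB{\Tan}{\bh}{\bk}\;=\;\mTinvResB{\Tan}{\bk}{\bk}\circ \imath^{k_1-h_1}.
\]
Pre-composing $\mTinvResB{\Tan}{\bh+(1,1)}{\bk}$ with the embedding $\imath\co \CCat{N}\ttimes\mCKTshape{m}{n}{h_1}{h_2}{k_1}{k_2}\into \CCat{N}\ttimes\mCKTshape{m}{n}{h_1+1}{h_2+1}{k_1}{k_2}$ then yields $\mTinvResB{\Tan}{\bh}{\bk}$ tautologically (one $\imath^{k_1-h_1-1}$ in the definition combines with the extra $\imath$ to give $\imath^{k_1-h_1}$). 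The corresponding statement for shifting $\bk$ by $(1,1)$ is entirely symmetric.

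The second step is to check that the absorbing subfunctor $\mJ$ defining the platform quotient is $\imath$-equivariant: for every object $x\in \Ob(\CCat{N}\ttimes\mCKTshapeB{m}{n}{\bh}{\bk})$,
\[
  \mJ_{\bh+(1,1)}(\imath(x))\;=\;\mJ_{\bh}(x)
\]
as subsets of $\mTinvResB{\Tan}{\bk}{\bk}(\imath^{k_1-h_1}(\cdots))$. For objects of the form $(a_1,a_2)$ or $(b_1,b_2)$ this is precisely Lemma~\ref{lem:iota-func-iso}; for objects $(v,a,\Tan,b)$ the analogous diagrammatic argument (identical to the one used in the combinatorial Lemma~\ref{lem:restrict-invt}, and relying on the identity $\imath^{k_1-h_1-1}(\imath(a))=\imath^{k_1-h_1}(a)$ of crossingless matchings) goes through verbatim. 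Combining this with Step~1 gives the strict equality $\mCKTinvBtemp{\Tan}{\bh+(1,1)}{\bk}\circ\imath=\mCKTinvBtemp{\Tan}{\bh}{\bk}$ of multifunctors to $\mBurnside$.

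The third and final step is to transfer this identity through the strictification in Definition~\ref{def:make-restrict-strict}. The coherent natural isomorphisms $\eta,\xi$ produced by Lemma~\ref{lem:iota-spec-alg-map} are themselves $\imath$-compatible (they are induced by the very same map $\imath$ on crossingless matchings), so the on-the-nose equality of the $\mCKTinvBtemp{\Tan}{\bh}{\bk}$'s becomes a natural isomorphism $\mCKTinvB{\Tan}{\bh+(1,1)}{\bk}\circ\imath\cong \mCKTinvB{\Tan}{\bh}{\bk}$. Applying $K\circ(-)$ and Elmendorf--Mandell rectification (which takes natural isomorphisms of multifunctors to $\mBurnside$ to weak equivalences, cf.~the discussion preceding Lemma~\ref{lem:equiv-gives-equiv}) produces the asserted equivalence of bimodule spectra, with the shifted $\CKTSpecCat{m}{h_1+1}{h_2+1}$-action restricted through $\imath$ recovering the $\CKTSpecCat{m}{h_1}{h_2}$-action. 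The symmetric argument handles the second equivalence. The only place where one has to be careful is the bookkeeping of Step~3, but no new ideas are needed once Steps~1 and~2 are in hand.
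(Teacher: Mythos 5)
Your proposal is correct and follows essentially the same route as the paper: the paper's proof simply observes that there is a natural isomorphism between $\mCKTinv{\Tan}{h_1}{h_2}{k_1}{k_2}$ and $\mCKTinv{\Tan}{h_1+1}{h_2+1}{k_1}{k_2}$ (exactly the content of your Steps 1--3, which it declares immediate from the definitions), deduces a stable equivalence, and concludes via Lemma~\ref{lem:equiv-gives-equiv}. Your only deviation is cosmetic: you pass the natural isomorphism through realization directly rather than routing it explicitly through the stable-equivalence formalism, which amounts to the same thing.
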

\begin{proof}
  It is immediate from the definitions that there is a natural
  isomorphism
  $\mCKTinv{\Tan}{h_1}{h_2}{k_1}{k_2}\to
  \mCKTinv{\Tan}{h_1+1}{h_2+1}{k_1}{k_2}$ of multifunctors from
  $\CCat{N}\ttimes\mCKTshape{m}{n}{m-h}{h}{n-k}{k}$ to
  $\mBurnside$. This, in turn, implies that
  $\mCKTinv{\Tan}{h_1}{h_2}{k_1}{k_2}$ and
  $\mCKTinv{\Tan}{h_1+1}{h_2+1}{k_1}{k_2}$ are stably equivalent.
  So, the first statement follows from
  Lemma~\ref{lem:equiv-gives-equiv}. The proof of the second statement
  is similar.
\end{proof}

\begin{proof}[Proof of Theorem~\ref{thm:invariance}]
  Reordering crossings induces an automorphism of the cube and a
  corresponding equivalence of homotopy colimits. For invariance under
  Reidemeister moves, we lift the proof of
  Theorem~\ref{thm:comb-invariance}.  By
  Lemma~\ref{lem:restrict-invt-spectra}, we may assume
  $(h_1,h_2)=(k_1,k_2)$. As in
  Theorem~\ref{thm:comb-invariance}, we focus on a Reidemeister II
  move; the other cases are similar. With notation as in the proof of
  Theorem~\ref{thm:comb-invariance}, it follows from the definitions
  that the subcomplex $C_1$ corresponds to an insular subfunctor
  $F_1$ of $\mTinv{\ou{\Tan}}$
  (Definition~\ref{def:insular-subfunctor}). The quotient functor
  $F_2=\mTinv{\ou{\Tan}}/F_1$ corresponds to the complex $C_2$, and
  has a further insular subfunctor $F_4\subset F_2$ naturally
  isomorphic to $\mTinv{\ou{\Tan'}}$ so that $F_3=F_2/F_4$ corresponds
  to the acyclic complexes $C_3$.

  Each $F_i$ restricts to a functor $G_i\co \CCat{N}\ttimes
  \mCKTshapeB{m}{n}{\bk}{\bk}\to \mBurnside$, and $G_1$ is an insular
  subfunctor of $\mTinvResB{\Tan}{\bk}{\bk}$ with quotient functor $G_2$,
  while $G_4$ is an insular subfunctor of $G_2$ with quotient functor
  $G_3$. By Lemma~\ref{lem:insular-absorbing}, $G_1$ induces an
  insular subfunctor $\overline{G}_1$ of
  $\mTinvResB{\Tan}{\bk}{\bk}/\mI=\mCKTinvB{\Tan}{\bk}{\bk}$, and $G_4$
  induces an insular subfunctor $\overline{G}_4$ of
  $\overline{G}_2=\mCKTinvB{\Tan}{\bk}{\bk}/\overline{G}_1$.  Applying the
  realization procedure gives a zig-zag of spectral bimodules
  $\CKTSpecBimB{\Tan}{\bk}{\bk}\to \CRealize{\ol{G}_2}\leftarrow
  \CKTSpecBimB{\Tan'}{\bk}{\bk}$.  From the proof of
  Theorem~\ref{thm:comb-invariance}, these maps induce isomorphisms on
  homology, and hence are stable homotopy equivalences, as desired.
\end{proof}

\begin{proof}[Proof of Theorem~\ref{thm:pairing}]
  By Lemma~\ref{lem:restrict-invt-spectra}, it suffices to prove
  Theorem~\ref{thm:pairing} when $h_1=k_1=\ell_1$ (and so
  $h_2=k_2=\ell_2$).
  
  We start by recalling the proof of the gluing theorem for the
  spectral Khovanov bimodules~\cite[\S5]{LLS-kh-tangles}. We
  introduced a \emph{gluing shape multicategory} $\mGlueS{2m}{2n}{2p}$
  (denoted $\mc{U}_{m,n,p}^0$ in \cite{LLS-kh-tangles}) with six
  kinds of objects: pairs $(a_1,a_2)\in
  \Crossingless{2m}\times\Crossingless{2m}$, $(b_1,b_2)\in
  \Crossingless{2n}\times\Crossingless{2n}$, $(c_1,c_2)\in
  \Crossingless{2p}\times\Crossingless{2p}$, triples $(a,T_1,b)$ with
  $a\in\Crossingless{2m}$ and $b\in\Crossingless{2n}$, triples
  $(b,T_2,c)$ with $b\in\Crossingless{2n}$ and
  $c\in\Crossingless{2p}$, and triples $(a,T_1T_2,c)$ with
  $a\in\Crossingless{2m}$ and $c\in\Crossingless{2p}$. The categories
  $\SmTshape{2m}{2n}$, $\SmTshape{2n}{2p}$, and $\SmTshape{2m}{2p}$ are full
  subcategories of $\mGlueS{2m}{2n}{2p}$, and there is also a unique
  multimorphism
  \begin{multline*}
    (a_1,a_2),\dots,(a_{\alpha-1},a_\alpha),(a_\alpha,T_1,b_1),(b_1,b_2),\dots,(b_{\beta-1},b_\beta),(b_\beta,T_2,c_1),(c_1,c_2),\dots,(c_{\gamma-1},c_\gamma)\\\to (a_1,T_1T_2,c_\gamma).
  \end{multline*}
  There is also a thickened version $\mGlue{2m}{2n}{2p}$ and a thickened
  product with the cube $\CCat{N_1+N_2}\ttimes\mGlue{2m}{2n}{2p}$. We
  then construct a functor $F\co
  \CCat{N_1+N_2}\ttimes\mGlue{2m}{2n}{2p}\to\mBurnside$ extending
  $\mTinv{\Tan_1}$, $\mTinv{\Tan_2}$, and $\mTinv{\Tan_1\Tan_2}$. The functor
  $F$ induces a map of spectral bimodules
  \[
    \KTSpecBim{\Tan_1}\DTP_{\KTSpecCat{2n}}\KTSpecBim{\Tan_1}\to \KTSpecBim{\Tan_1\Tan_2}
  \]
  and the induced map of singular chain complexes agrees with the map
  of Khovanov complexes of bimodules (and so is an equivalence by
  Whitehead's theorem).
  
  Now, let $\mCKGlueS{m}{n}{p}$ be the full subcategory of
  $\mGlueS{m+k_1+k_2}{n+k_1+k_2}{p+k_1+k_2}$ spanned by objects as
  above but with $a_i\in\rCrossinglessB{m}{\bk}$,
  $b_i\in\rCrossinglessB{n}{\bk}$, and
  $c_i\in\rCrossinglessB{p}{\bk}$; define the thickened version
  $\mCKGlue{m}{n}{p}$ similarly. Let $\wt{F}$ be the restriction of
  $F$ to $\CCat{N_1+N_2}\ttimes\mCKGlue{m}{n}{p}$. For an object $o$ of
  $\CCat{N_1+N_2}\ttimes\mCKGlue{m}{n}{p}$ define $\mK(o)\subset F(o)$ to be
  $\mJ_{\Tan_1}(o)$ if $o\in \Ob(\mCKTshapeB{m}{n}{\bk}{\bk})$,
  $\mJ_{\Tan_2}(o)$ if $o\in\Ob(\mCKTshapeB{n}{p}{\bk}{\bk})$, and
  $\mJ_{\Tan_1\Tan_2}(o)$ if $o\in\Ob(\mCKTshapeB{m}{p}{\bk}{\bk})$.
  (These definitions of $\mK$
  agree on the overlaps of these subcategories.)

  The proof of Lemma~\ref{lem:glue-is-chain-map} shows that $\mK$ is
  an absorbing subfunctor.

  As in Section~\ref{sec:Hn-spec-review} we can realize the quotient
  functor $\wt{F}/\mK$ to obtain a functor $\Realize{\wt{F}/\mK}\co
  \mCKGlueS{m}{n}{p}\to\GrSpectra$. Similarly to
  Section~\ref{sec:shape-multicats}, we can
  reinterpret $\Realize{\wt{F}/\mK}$ as a map of spectral bimodules
  \begin{equation}\label{eq:pair-spec-bim-pf}
    \CKTSpecBimB{\Tan_1}{\bk}{\bk}\DTP_{\CKTSpecCatB{n}{\bk}}\CKTSpecBimB{\Tan_2}{\bk}{\bk}\to
    \CKTSpecBimB{\Tan_1\Tan_2}{\bk}{\bk}.
  \end{equation}
  (This involves a little fiddling as in Definition~\ref{def:make-restrict-strict}.)
  As in the arc algebra case~\cite[Lemma 5.6]{LLS-kh-tangles}, taking
  singular chains this is the gluing map
  \[
    \CKTfuncB{{\Tan_1}}{\bk}{\bk}\DTP_{\CKTalgB{n}{\bk}}\CKTfuncB{{\Tan_2}}{\bk}{\bk}\to 
    \CKTfuncB{{\Tan_1\Tan_2}}{\bk}{\bk}
  \]
  from Theorem~\ref{thm:CK-pairing}. By Whitehead's theorem, the
  map~\eqref{eq:pair-spec-bim-pf} is a weak equivalence, as desired.
\end{proof}

\section{Topological Hochschild homology}\label{sec:THH}
Let $\Tan$ be an $(n,n)$-tangle. We can form the topological
Hochschild homology of $\CKTSpecCatBig{n}$ with coefficients in
$\CKTSpecBimBig{\Tan}$, which we write $\THH(\CKTSpecCatBig{n};\CKTSpecBimBig{\Tan})$ or
$\THH(\CKTSpecBimBig{\Tan})$. The spectral categories
$\CKTSpecCatBig{n}$ are pointwise cofibrant (see~\cite[Lemma
4.5]{LLS-kh-tangles}), so the topological Hochschild homology can be
obtained as the homotopy colimit of the diagram
\begin{align*}
    \cdots
    &\mathrel{\substack{\textstyle\rightarrow\\[-0.5ex]
    \textstyle\rightarrow \\[-0.5ex]
    \textstyle\rightarrow\\[-0.5ex]
    \textstyle\rightarrow}}
    \!\!\!\!\!\!\coprod_{a_1,a_2,a_3\in\Ob(\CKTSpecCatBig{n})}\!\!\!\!\!\!\CKTSpecBimBig{\Tan}(a_3,a_1)\smas\CKTSpecCatBig{n}(a_1,a_2)\smas \CKTSpecCatBig{n}(a_2,a_3)\\
    &\mathrel{\substack{\textstyle\rightarrow\\[-0.5ex]
    \textstyle\rightarrow \\[-0.5ex]
    \textstyle\rightarrow}}
    \!\!\!\!\coprod_{a_1,a_2\in\Ob(\CKTSpecCatBig{n})}\!\!\!\!\CKTSpecBimBig{\Tan}(a_2,a_1)\smas\CKTSpecCatBig{n}(a_1,a_2)\rightrightarrows\!\!\coprod_{a_1\in\Ob(\Cat)}\!\!\CKTSpecBimBig{\Tan}(a_1,a_1)
\end{align*}
where the horizontal maps are given by the compositions in
$\CKTSpecCatBig{n}$ and its actions on $\CKTSpecBimBig{\Tan}$
\cite[Proposition 3.5]{BM-top-spectral}.

\begin{proposition}
  There is an isomorphism
  \[
    H_*\THH(\CKTSpecBimBig{\Tan})\cong \HH_*(\CKTfuncBig{\Tan})
  \]
\end{proposition}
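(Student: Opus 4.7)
The plan is to identify $\THH$ with the geometric realization of the cyclic bar simplicial spectrum displayed just before the proposition, then apply singular chains term-by-term and recognize the resulting double complex as the standard Hochschild bar complex of $\CKTalgBig{n}$ with coefficients in $\CKTfuncBig{\Tan}$. Concretely, let $B_k^{\mathrm{cyc}}$ denote the coproduct
\[
  \coprod_{a_0,\dots,a_k}\CKTSpecBimBig{\Tan}(a_k,a_0)\smas \CKTSpecCatBig{n}(a_0,a_1)\smas\cdots\smas \CKTSpecCatBig{n}(a_{k-1},a_k),
\]
with the usual cyclic bar face and degeneracy maps built from composition in $\CKTSpecCatBig{n}$ and the two-sided action on $\CKTSpecBimBig{\Tan}$. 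Then $\THH(\CKTSpecBimBig{\Tan})\simeq\hocolim_{\Delta^{\op}}B_\bullet^{\mathrm{cyc}}$.

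The first key step is to commute $C_*$ past the homotopy colimit and the smash products. Because $\CKTSpecCatBig{n}$ is pointwise cofibrant (cited from \cite{LLS-kh-tangles}), so is each $B_k^{\mathrm{cyc}}$, and the Eilenberg--Zilber theorem together with a standard spectral sequence / skeletal filtration argument gives a natural quasi-isomorphism
\[
  C_*\bigl(\hocolim_{\Delta^{\op}}B_\bullet^{\mathrm{cyc}}\bigr)\simeq \Total\bigl(C_*B_\bullet^{\mathrm{cyc}}\bigr),
\]
and on each simplicial level a Kunneth quasi-isomorphism
\[
  C_*B_k^{\mathrm{cyc}}\simeq \bigoplus_{a_0,\dots,a_k}C_*\CKTSpecBimBig{\Tan}(a_k,a_0)\otimes C_*\CKTSpecCatBig{n}(a_0,a_1)\otimes\cdots\otimes C_*\CKTSpecCatBig{n}(a_{k-1},a_k).
\]
Here one uses that each morphism spectrum is a finite wedge of sphere spectra (by construction via the Burnside category and $K$-theory), so $C_*$ of smash products is literally the tensor product, not merely quasi-isomorphic to it.

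Next, by Theorem~\ref{thm:de-spectrify} we have quasi-isomorphisms $C_*\CKTSpecCatBig{n}\simeq \CKTalgBig{n}$ and $C_*\CKTSpecBimBig{\Tan}\simeq \CKTfuncBig{\Tan}$ compatible with composition and with the bimodule structures. Substituting these into the totalization identifies $C_*\THH(\CKTSpecBimBig{\Tan})$ with (a complex quasi-isomorphic to) the standard bar complex
\[
  \bigoplus_{k\ge 0}\bigoplus_{a_0,\dots,a_k}\CKTfuncBig{\Tan}(a_k,a_0)\otimes \CKTalgBig{n}(a_0,a_1)\otimes\cdots\otimes \CKTalgBig{n}(a_{k-1},a_k),
\]
with the usual Hochschild differential. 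Since $\CKTalgBig{n}$ is free as a $\ZZ$-module in each bidegree, this bar complex computes $\HH_*(\CKTfuncBig{\Tan})$, yielding the desired isomorphism on homology.

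The main technical hurdle is bookkeeping for the second paragraph: verifying that the cofibrancy of the spectral categories is enough to replace the derived smash products and the homotopy colimit by honest tensor products and totalizations after applying $C_*$. This is essentially the same point made in \cite[Proposition 4.2]{LLS-kh-tangles} for the de-spectrify statement and in \cite[Lemma 5.6]{LLS-kh-tangles} for the pairing theorem, and amounts to checking that the Elmendorf--Mandell $K$-theory output of each multimorphism set in $\mBurnside$ is a (rectified) wedge of sphere spectra whose singular chains realize exactly the free abelian group $\Forget$ of the underlying correspondence, so that Kunneth and realization comparisons are strict at the chain level rather than just up to quasi-isomorphism.
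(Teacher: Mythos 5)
Your argument is correct and is essentially the paper's approach: the paper's proof simply defers to the analogous statement for the spectral arc-algebra bimodules (\cite[Proposition 7.5]{LLS-kh-tangles}), which is proved by linearizing the cyclic bar construction exactly as you do --- apply $C_*$ to the simplicial spectrum, identify the result with the Hochschild bar complex via Theorem~\ref{thm:de-spectrify}, and use freeness over $\ZZ$. One minor overstatement: the morphism spectra are only weakly equivalent to wedges of spheres, and $C_*$ of a smash product is only naturally quasi-isomorphic (not equal) to the tensor product of chains, but quasi-isomorphisms compatible with the simplicial structure are all the argument requires.
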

\begin{proof}
  The proof is the same as the analogous result for the spectral
  Khovanov bimodules~\cite[Proposition 7.5]{LLS-kh-tangles}.
\end{proof}

Recall that the Hochschild homology of the Chen-Khovanov bimodules has
another interpretation. Given an $(n,n)$-tangle $\Tan$, we can form
the closure $\anclose{\Tan}$ of $\Tan$ in the annulus
$S^1\times[0,1]$. Asaeda-Przytycki-Sikora constructed a Khovanov
homology for links in thickened surfaces~\cite{APS-kh-surfaces} which, in particular, gives
an invariant $\AKh(\anclose{\Tan})$, the \emph{annular
  Khovanov homology} of $\anclose{\Tan}$. (This case was further
studied by Roberts~\cite{Roberts-kh-dcov}, Grigsby-Wehrli~\cite{GW-kh-sutured},
and others.) Specifically,
there is a filtration on the Khovanov complex of the closure of $\Tan$
in $\RR^3$, coming from using the labels of circles by $1$ or $X$ to
orient them and then considering the winding number around the
axis. The invariant $\AKh(\anclose{\Tan};\ell)$ is the homology of the
associated graded complex to this filtration, in winding number grading $\ell$. As such,
$\AKh(\anclose{\Tan})$ is tri-graded, by the homological, quantum, and
winding number gradings.

\begin{convention}
  Fix $k$ with $0\leq k\leq n$ and let $\bbk=(n-k,k)$.
\end{convention}

Beliakova-Putyra-Wehrli relate the annular Khovanov homology to the Chen-Khovanov invariants:
\begin{theorem}\cite[Theorem C]{BPW-Kh-HH}\label{thm:BPW}
  There is an isomorphism 
  \begin{equation}\label{eq:BPW}
    \HH_*(\CKTalgB{n}{\bbk};\CKTfuncB{\Tan}{\bbk}{\bbk})\cong \AKh(\anclose{\Tan};n-2k)\{n-2k\}.
  \end{equation}
\end{theorem}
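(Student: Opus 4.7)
The plan is to identify both sides of \Equation{BPW} with a common combinatorial description built from the annular closure $\anclose{\Tan}$. I would proceed in three steps.

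First, I would use \Theorem{CK-pairing} to express $\HH_*$ via the standard two-sided bar complex, and \Lemma{sweet} (sweetness of the Chen-Khovanov bimodules) to replace the derived tensor product by an ordinary tensor product in each bar degree without loss of homotopical information. The computation is natural with respect to the cube of resolutions of $\Tan$, so it suffices to carry it out on each flat resolution $\Tan_v$ and assemble via the iterated mapping cone construction.

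Second, I would give a geometric model for the bar complex in the flat case. The zeroth piece is $\bigoplus_a \CKTfuncB{\Tan}{\bbk}{\bbk}(a,a)$ modulo the relation identifying the two $\CKTalgB{n}{\bbk}$-actions, and unwinding the definitions, this is the free abelian group generated by labelings of the circles obtained by identifying the two sides of $\ou{\Tan}$ in the annulus $S^1 \times [0,1]$, modulo the type II and type III relations of Chen-Khovanov. The $n-k$ horizontal strands added below $\Tan$ and the $k$ added above, after annular closure, become essential circles winding around the core of the annulus in opposite senses; the net count is $n - 2k$, explaining the winding-number grading on the right-hand side.

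Third, I would match this with the associated graded of the winding-number filtration that defines $\AKh$. In Asaeda-Przytycki-Sikora's construction, one uses the $1/X$ labels to orient each circle in a resolution and records the total winding number around the annulus core. The Chen-Khovanov relations (kill labelings with a type III circle; require each type II circle to be labeled by $1$) correspond exactly to the constraint on essential circles which isolates the associated-graded summand in winding-number grading $n-2k$. By \Proposition{CK-cube-maps-descend}, the Khovanov cube differentials descend through the Chen-Khovanov quotient, and the same combinatorial argument shows that the descended differentials agree with the differentials on the winding-number associated graded. The quantum-grading shift $\{n-2k\}$ arises from the shift conventions comparing the platform algebra with the annular closure.

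The technical heart of the argument, and what I expect to be the main obstacle, is the matching in the third step: verifying that the Chen-Khovanov quotient corresponds precisely to the annular-Khovanov associated-graded piece in winding-number grading $n - 2k$, with the correct quantum shift $\{n-2k\}$, and checking that the differentials agree on the nose. This is essentially bookkeeping, but it requires careful tracking of orientation conventions, of the grading shifts introduced by both the platform and the Asaeda-Przytycki-Sikora constructions, and of signs in the cube of resolutions.
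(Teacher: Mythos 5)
There is a genuine gap, and it is worth noting first that the paper does not prove this statement at all: Theorem~\ref{thm:BPW} is quoted from Beliakova--Putyra--Wehrli, and the paper explicitly disclaims giving an independent proof (it only constructs an explicit chain map $\Xi$ and shows, \emph{using} the cited theorem, that $\Xi$ induces the isomorphism; see Theorem~\ref{thm:Xi-is-BPW} and Lemma~\ref{lem:HH-id-braid}). So your proposal is an attempt at a from-scratch proof, and its central step is missing. In your second step you only describe the degree-zero piece of the Hochschild complex, i.e.\ the coinvariants $\bigoplus_a \CKTfuncB{\Tan}{\bbk}{\bbk}(a,a)$ modulo the commutator relations, and then in the third step you match this group with the annular chain group. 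But for a flat tangle the right-hand side of~\eqref{eq:BPW} is concentrated in homological degree zero, so the theorem asserts in particular that \emph{all higher Hochschild homology} of $\CKTalgB{n}{\bbk}$ with these coefficients vanishes (already for $\Tan$ the identity braid, where the coefficients are the algebra itself). Sweetness of the bimodule gives projectivity on each side separately, not as a bimodule, so it does not force this vanishing; nothing in your sketch addresses it, and it is precisely the hard content of Beliakova--Putyra--Wehrli's argument. Without it, the reduction to flat resolutions and the cube-filtration assembly in your first step cannot even get started, since the $E_1$ page is the full $\HH_*$ of the flat pieces, not just $\HH_0$.

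A secondary problem is that even the degree-zero identification is not "unwinding the definitions." The coinvariants quotient by all relations $x\cdot r\sim r\cdot x$, and showing that the result is free of the correct rank and maps isomorphically onto the winding-grading-$(n-2k)$ summand of the annular complex requires an argument: in the paper's treatment the analogous point is the map $\Xi_0=C\circ B\circ A$ of Definition~\ref{def:BPW-gluing-map}, whose surjectivity for the identity braid is proved by a filtration argument in Lemma~\ref{lem:HH-id-braid}, while injectivity there is deduced from the rank count supplied by Theorem~\ref{thm:BPW} itself. You would also need an explicit chain map (not just an identification of groups) compatible with the cube maps of Proposition~\ref{prop:CK-cube-maps-descend} in order to run your assembly step; this is exactly what the paper's $\Xi$ provides. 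Finally, a small conceptual slip: the winding-number grading $n-2k$ is governed by the $1/X$ labels on the $n$ essential circles coming from the closure of $\Tan$ (the APS orientation convention), not by the added horizontal strands of $\ou{\Tan}$ winding "in opposite senses"; those added circles are parallel and are discarded by the projection $C$.
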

(A special case was proved earlier by Auroux-Grigsby-Wehrli
in~\cite{AGW-kh-HH}.)

In order to prove a spectral refinement of Theorem~\ref{thm:BPW}, we
need the explicit map 
\[
  \HC_*(\CKTalgB{n}{\bbk};\CKTfuncB{\Tan}{\bbk}{\bbk})\to \AKhCx(\anclose{\Tan};n-2k)\{n-2k\}
\]
inducing the isomorphism~\eqref{eq:BPW}. While
Beliakova-Putyra-Wehrli's proof does not explicitly give the map,
their ideas easily extend to do so. We emphasize that we do not give
an independent proof of Theorem~\ref{thm:BPW}: for instance, the proof
of Lemma~\ref{lem:HH-id-braid} relies on Theorem~\ref{thm:BPW}; we
merely construct an explicit isomorphism.

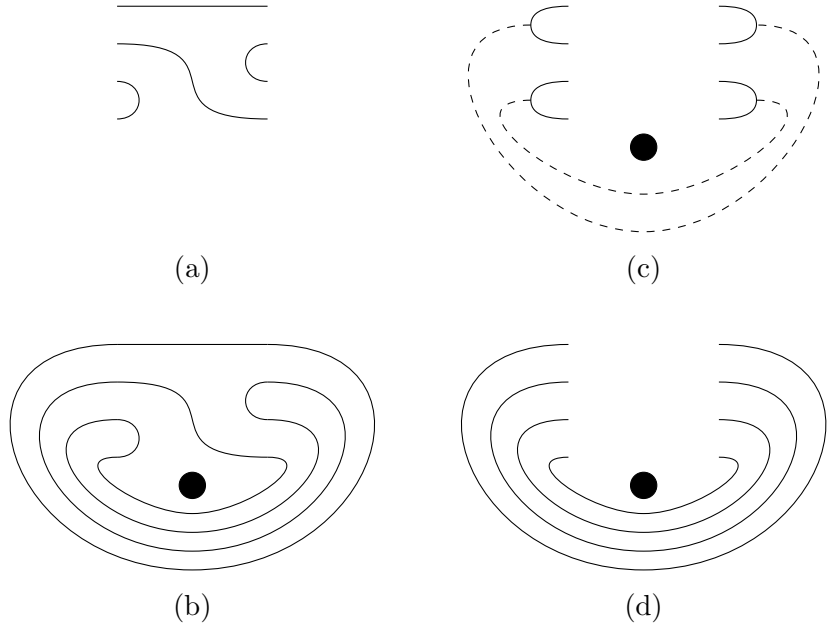
\begin{figure}
  \centering
  \begin{tikzpicture}
    \begin{scope}[xshift=0, yshift=0,xscale=.5,yscale=.5]
      \draw (0,0) to[out=0,in=0, looseness=2] (0,1);
      \draw (0,2) to[out=0,in=180, looseness=2] (4,0);
      \draw (0,3) to (4,3);
      \draw (4,1) to[out=180,in=180, looseness=2] (4,2);
      \node at (2,-4) (alabel) {(a)};
    \end{scope}
    %
    \begin{scope}[xshift=0,yshift=-4.5cm,xscale=.5,yscale=.5]
      \draw (0,0) to[out=0,in=0, looseness=2] (0,1);
      \draw (0,2) to[out=0,in=180, looseness=2] (4,0);
      \draw (0,3) to (4,3);
      \draw (4,1) to[out=180,in=180, looseness=2] (4,2);
      \draw (0,3) to[out=180,in=180, looseness=2] (2,-3) to[out=0,in=0, looseness=2] (4,3);
      \draw (0,2) to[out=180,in=180, looseness=2] (2,-2.5) to[out=0,in=0, looseness=2] (4,2);
      \draw (0,1) to[out=180,in=180, looseness=2] (2,-2) to[out=0,in=0, looseness=2] (4,1);
      \draw (0,0) to[out=180,in=180, looseness=1.5] (2,-1.5) to[out=0,in=0, looseness=1.5] (4,0);
      \draw[fill] (2,-0.75) circle [radius=.35];
      \node at (2,-4) (blabel) {(b)};
    \end{scope}
    %
    \begin{scope}[xshift=6cm,xscale=.5,yscale=.5]
      \draw (0,0) to[out=180,in=270] (-1,.5) to[out=90,in=180] (0,1);
      \draw (0,2) to[out=180,in=270] (-1,2.5) to[out=90,in=180] (0,3);
      \draw (4,0) to[out=0,in=270] (5,.5) to[out=90,in=0] (4,1);
      \draw (4,2) to[out=0,in=270] (5,2.5) to[out=90,in=0] (4,3);
      \draw[fill] (2,-0.75) circle [radius=.35];
      \draw[dashed] (-1,.5) to[out=180,in=180,looseness=1.5] (2,-2) to[out=0,in=0,looseness=1.5] (5,.5);
      \draw[dashed] (-1,2.5) to[out=180,in=180,looseness=1.5] (2,-3) to[out=0,in=0,looseness=1.5] (5,2.5);
      \node at (2,-4) (clabel) {(c)};
    \end{scope}
    %
    \begin{scope}[xshift=6cm,yshift=-4.5cm,xscale=.5,yscale=.5]
      \draw (0,3) to[out=180,in=180, looseness=2] (2,-3) to[out=0,in=0, looseness=2] (4,3);
      \draw (0,2) to[out=180,in=180, looseness=2] (2,-2.5) to[out=0,in=0, looseness=2] (4,2);
      \draw (0,1) to[out=180,in=180, looseness=2] (2,-2) to[out=0,in=0, looseness=2] (4,1);
      \draw (0,0) to[out=180,in=180, looseness=1.5] (2,-1.5) to[out=0,in=0, looseness=1.5] (4,0);
      \draw[fill] (2,-0.75) circle [radius=.35];
      \node at (2,-4) (clabel) {(d)};
    \end{scope}    
  \end{tikzpicture}
  \caption{\textbf{The annular closure.} (a) A (flat) tangle $\Tan$. (b)
    The annular closure $\anclose{\Tan}$. (c) $\Wmirror{a}\amalg a$
    for $a$ a particular crossingless matching, together with the
    cores of the $1$-handles (dashed) in the saddle cobordism from
    $\Wmirror{a}\amalg a$ to the identity braid. (d) The identity
    braid, inside the annulus.}
  \label{fig:ann-close}
\end{figure}

Let $\Tan$ be an $(n,n)$-tangle, and
$\anclose{\Tan}\subset \RR^2\setminus\{(0,0)\}$ the annular closure of
$\Tan$. (See Figure~\ref{fig:ann-close}.) We can view $\anclose{\Tan}$ as lying in $\RR^2$ and so, in
particular, can consider $\KhCx([\Tan])$. Given
$a\in\rCrossinglessB{n}{\bbk}$, define the map
\[
  A=A_{a,\Tan}\co \KTfunc{\ou{\Tan}}(a,a) \to
  \KhCx(\anclose{\ou{\Tan}})
\]
to be the map associated to the saddle cobordism from
$\Wmirror{a}\amalg a$ to the identity braid around the
annulus. (Again, see Figure~\ref{fig:ann-close}.)

\begin{proposition}\label{prop:A-map}
  The maps $A_{a,\Tan}$ satisfy the following properties:
  \begin{enumerate}[label=(\arabic*),leftmargin=*]
  \item \label{item:A-chain-map} Each $A_{a,\Tan}$ is a chain map.
  \item \label{item:A-filtered} The image of $A_{a,\Tan}$ lies in annular filtration $\leq 0$.
  \item \label{item:A-coinv} Given $a,b\in\rCrossinglessB{n}{\bbk}$, the following diagram commutes:
    \[
      \xymatrix{
        \KhCx(a\ou{\Tan}\Wmirror{b}\amalg b\Wmirror{a})\{2n\}\ar[r]^-{s_b}\ar[d]_{s_a} &
        \KhCx(a\ou{\Tan}\Wmirror{a})\{n\}\ar[d]^{A_{a,\Tan}}\\
        \KhCx(b\ou{\Tan}\Wmirror{b})\{n\}\ar[r]_-{A_{b,\Tan}} & \KhCx(\anclose{\ou{\Tan}}).
      }
    \]
    Here, the arrows labeled $s_a$ and $s_b$ are induced by the saddle
    cobordisms from $\Wmirror{a}\amalg a$ to the identity braid and
    $\Wmirror{b}\amalg b$ to the identity braid.
  \item \label{item:A-trace} Given $(n,n)$-tangles $\Tan_1$ and
    $\Tan_2$ and $a,b\in\rCrossinglessB{n}{\bbk}$, the following
    diagram commutes:
    \[
      \xymatrix{
        \KhCx(a\ou{\Tan_1}\Wmirror{b}\amalg b\ou{\Tan_2}\Wmirror{a})\{2n\}\ar[r]^-{s_b}\ar[d]_{s_a} &
        \KhCx(a\ou{\Tan_1\Tan_2}\Wmirror{a})\{n\}\ar[d]^{A_{a,\Tan}}\\
        \KhCx(b\ou{\Tan_2\Tan_1}\Wmirror{b})\{n\}\ar[r]_-{A_{b,\Tan}} & \KhCx(\anclose{\ou{\Tan_1\Tan_2}}=\anclose{\ou{\Tan_2\Tan_1}}).
      }
    \]
    Again, the arrows labeled $s_a$ and $s_b$ are induced by the saddle
    cobordisms from $\Wmirror{a}\amalg a$ to the identity braid and
    $\Wmirror{b}\amalg b$ to the identity braid.
  \end{enumerate}
\end{proposition}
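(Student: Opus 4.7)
The plan is to verify each of the four properties by appealing to functoriality of the Khovanov TQFT under cobordisms, combined in Property~\ref{item:A-filtered} with a careful analysis of the annular filtration.

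For Property~\ref{item:A-chain-map}, the chain map property is immediate: the saddle cobordism from $\Wmirror{a}\amalg a$ to the identity braid is spatially disjoint from a neighborhood of the crossings of $\ou{\Tan}$, so the induced map commutes with the cube-of-resolutions differentials defining $\KTfunc{\ou{\Tan}}(a,a)$.

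For Property~\ref{item:A-filtered}, I would decompose the saddle cobordism into $n$ elementary $1$-handle attachments, chosen to lie in a region disjoint from a fixed ray emanating from the puncture of the annulus. Every circle in $a\ou{\Tan_v}\Wmirror{a}$ is contractible in $\RR^2$, hence trivial in the annulus, and so lies in annular filtration zero. The key claim is then that each elementary saddle attached away from the chosen ray induces a TQFT map that is non-increasing on annular filtration; this reduces to the standard case analysis on the effect of an elementary merge/split on the winding-number grading, depending on which of the incoming/outgoing circles are nontrivial in the annulus and how they are labeled.

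For Property~\ref{item:A-coinv}, both compositions correspond to applying the Khovanov TQFT to cobordisms in $\RR^2\times[0,1]$ that are isotopic rel boundary. In each case the underlying surface is the disjoint union of the saddle cobordism $\Wmirror{b}\amalg b\to\Id$ (performed in one region) and the saddle cobordism $\Wmirror{a}\amalg a\to\Id$ (performed in another, spatially disjoint, region). Since the supports are disjoint, the order of application does not affect the isotopy class of the resulting cobordism, and functoriality of Khovanov's cobordism maps gives the commutativity. Property~\ref{item:A-trace} is analogous, but the relevant isotopy cyclically moves $\Tan_2$ across $\Tan_1$ in the annulus: the cobordism obtained by first saddling $\Wmirror{b}\amalg b$ (landing in $a\ou{\Tan_1\Tan_2}\Wmirror{a}$) and then saddling $\Wmirror{a}\amalg a$ is isotopic in $(\RR^2\setminus\{0\})\times[0,1]$ to the one obtained by first saddling $\Wmirror{a}\amalg a$ (landing in $b\ou{\Tan_2\Tan_1}\Wmirror{b}$) and then saddling $\Wmirror{b}\amalg b$, and then one invokes functoriality.

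The main obstacle is Property~\ref{item:A-filtered}: carefully tracking the annular filtration through the sequence of elementary saddles, since the filtration behavior at each saddle depends on whether the circles involved are trivial or nontrivial in the annulus and on their $\{1,X\}$-labelings. The remaining three properties are essentially formal consequences of TQFT functoriality applied to the appropriately chosen cobordisms.
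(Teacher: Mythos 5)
Your proposal is correct and follows essentially the same route as the paper: Properties~\ref{item:A-chain-map}, \ref{item:A-coinv}, and \ref{item:A-trace} come from far-commutativity (disjoint supports) of the cobordism maps, and Property~\ref{item:A-filtered} from the fact that cobordism maps do not increase the annular (winding number) filtration together with the observation that every circle of $a\ou{\Tan}\Wmirror{a}$ is trivial in the annulus, so the source sits in filtration $0$. The only cosmetic differences are that you verify the filtration statement by decomposing $A_{a,\Tan}$ into elementary saddles (the paper just cites that cobordism maps respect the filtration) and you prove \ref{item:A-coinv} directly rather than as the special case of \ref{item:A-trace} that it is; also, in \ref{item:A-trace} no isotopy ``moving $\Tan_2$ across $\Tan_1$'' is needed, since the two closures are the same annular diagram and the two compositions are the same pair of disjointly supported saddles performed in either order.
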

\begin{proof}
  Point~\ref{item:A-chain-map} follows from far-commutativity of the
  cobordism maps. Point~\ref{item:A-filtered} follows from the facts
  that the cobordism maps respect the annular filtration and
  $a\ou{\Tan}\Wmirror{a}$ has winding number $0$.
  Point~\ref{item:A-coinv} is a special case of
  Point~\ref{item:A-trace}. Point~\ref{item:A-trace} again follows
  from far-commutativity of the cobordism maps.
\end{proof}

Let $\Filt_{\leq 0}\KhCx(\anclose{\ou{\Tan}})$ be the subcomplex of
$\KhCx(\anclose{\ou{\Tan}})$ in filtration $\leq 0$. By
Point~\ref{item:A-filtered} of Proposition~\ref{prop:A-map}, the image
of $A_{a,\Tan}$ is contained in $\Filt_{\leq 0}\KhCx(\anclose{\ou{\Tan}})$. Let
\[
  B\co \Filt_{\leq 0}\KhCx(\anclose{\ou{\Tan}})\to \Filt_{\leq 0}\KhCx(\anclose{\ou{\Tan}})/\Filt_{<0}\KhCx(\anclose{\ou{\Tan}})
\]
be projection to the associated graded complex.

Let $P_{L}$ be the image of the lower-left platform for $\ou{\Tan}$ in
the annulus $\RR^2\setminus\{(0,0)\}$ and let $P_{U}$ be the image of
the upper-left platform.  In $\anclose{\ou{\Tan}}$, there are several kinds
of circles:
\begin{enumerate}[label=(A-\roman*),leftmargin=*]
\item Circles which are disjoint from $P_L$ and $P_U$. 
\item Circles which pass through $P_L$ once and are disjoint from $P_U$. Call these \emph{lower horizontal circles}.
\item Circles which pass through $P_U$ once and are disjoint from
  $P_L$. Call these \emph{upper horizontal circles}.
\end{enumerate}

Observe that the complex $\KhCx(\anclose{\ou{\Tan}})$ decomposes as a
direct sum corresponding to the different ways of labeling the upper
and lower horizontal circles.
Define
\[
  C\co \Filt_{\leq 0}\KhCx(\anclose{\ou{\Tan}})/\Filt_{<0}\KhCx(\anclose{\ou{\Tan}})
  \to \AKhCx(\anclose{\Tan};n-2k)\{n-2k\}
\]
to be the result of projecting to the summand
where each of the $k$ upper horizontal circles is labeled $1$ and each
of the $(n-k)$ lower horizontal circles is labeled $X$, and then
forgetting the lower and upper horizontal circles. It is clear that
the image of this map lies in the summand with winding number grading
$n-2k$.

\begin{lemma}\label{lem:CBA-descends}
  The composition $C\circ B\circ A$ vanishes on
  \[
    \CKTfuncIdealB{\Tan}{\bbk}{\bbk}(a,a)\subset \KTfunc{[\ou{\Tan}]}(a,a).
  \]
\end{lemma}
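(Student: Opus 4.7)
The plan is to handle the two cases in the definition of the ideal (Cases~\ref{item:tan-III} and~\ref{item:tan-II}) separately, using the Frobenius-algebra description of the saddle cobordism map~$A$. Recall (Property~\ref{item:Psi-mmorph} applied to the canonical saddle cobordism defining~$A$) that on each genus-$0$ connected component~$\Sigma$ of~$A$, with~$p$ inputs and~$q$ outputs, an input $X$-count of exactly~$1$ contributes the unique nonzero term labeling all~$q$ outputs by~$X$, an input $X$-count of~$0$ contributes $q$~terms each with exactly one output labeled~$1$ and the rest labeled~$X$, and an input $X$-count of~$\geq 2$ contributes~$0$ (with an analogous rule for genus-$1$ components).

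For Case~\ref{item:tan-II} (type~II), suppose $y$ labels the circle $Z_\beta \subset a\ou{\Tan}\Wmirror{a}$ containing a type~II arc~$\beta$ by~$X$, and let $\Sigma_\beta$ be the cobordism component containing $Z_\beta$ as an input. Since $y(Z_\beta)=X$, for a nonzero contribution $\Sigma_\beta$'s input $X$-count must equal~$1$, forcing every output of~$\Sigma_\beta$ to be labeled~$X$. The outputs include the horizontal circle through each platform endpoint of~$\beta$. If some endpoint lies on an upper platform, the resulting upper horizontal is labeled~$X$, contradicting~$C$'s requirement of label~$1$, and the term dies under~$C$. If all endpoints of~$\beta$ lie on lower platforms, then $\Sigma_\beta$ necessarily contains at least one $\Tan$-closure circle as output (since~$\beta$ must traverse~$\Tan$), which is forced to be labeled~$X$; tracking the intrinsic winding numbers of the circles in~$\anclose{\ou{\Tan}}$ shows the resulting term lies in filtration strictly below~$0$, hence dies under~$B$.

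For Case~\ref{item:tan-III} (type~III), the ideal equals all of $\KTfunc{\ou{\Tan}}(a,a)$, so the composition must vanish on every~$y$. Let $\alpha$ be a type~III arc with both endpoints on, say, the lower-left platform at positions $i,j$. Both horizontal circles $H_i$ and $H_j$ are outputs of the cobordism component $\Sigma_\alpha$ containing $Z_\alpha$, and~$C$ requires both to be labeled~$X$. Depending on whether $\Sigma_\alpha$'s input $X$-count is~$0$ or~$1$, the Frobenius rule forces additional outputs of $\Sigma_\alpha$---in particular at least one $\Tan$-closure circle, since $Z_\alpha$ traverses~$\Tan$ to connect positions $i$ and~$j$---to be labeled~$X$. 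Simultaneously, for any cobordism component containing an upper horizontal circle, $C$'s requirement that the upper be labeled~$1$ forces (by the genus-$0$ rule with input $X$-count~$0$) at least one $\Tan$-closure output of that component to be labeled~$X$. A winding calculation then shows the total winding is strictly negative, killing the term under~$B$.

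The main obstacle is the winding bookkeeping in both cases, which reduces to verifying that the platform structure dictated by the type~II or type~III arc forces enough additional $X$-labels on positively-winding circles---namely the horizontal circles through the arc's platform endpoints, together with at least one $\Tan$-closure circle threaded by the arc---to strictly exceed the baseline winding contribution $2k-n$ coming from the $C$-imposed horizontal labels.
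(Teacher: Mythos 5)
Your overall strategy --- feed the single composite cobordism defining $A$ into the Frobenius rule component by component and then argue with the annular (winding) grading --- is a legitimate alternative to what the paper does, but the step you defer to ``winding bookkeeping'' is exactly the heart of the lemma, and the mechanism you propose for it does not close. Forcing the horizontal circles through the arc's platform endpoints to be labeled $X$ gives nothing new (the map $C$ already demands that all $n-k$ lower horizontals be $X$), and forcing \emph{one} $\Tan$-closure circle to be labeled $X$ does not push the winding below $0$: a term surviving $B$ and $C$ has contribution $k-(n-k)=2k-n$ from the horizontals and therefore needs the essential closure circles to contribute $n-2k$, which for $k\geq 1$ is perfectly compatible with up to $k$ of them being labeled $X$ --- indeed the surviving terms exhibited in the proof of Lemma~\ref{lem:HH-id-braid} are precisely of this kind. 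So ``one extra forced $X$ exceeding the baseline $2k-n$'' is not a contradiction. Moreover, the auxiliary claim in your type III case, that any cobordism component containing an upper horizontal circle must have a $\Tan$-closure circle among its outputs, is false: if $a\in\rCrossinglessB{n}{\bbk}$ has an arc joining an upper platform point directly to a lower platform point, the corresponding component has as outputs exactly one upper and one lower horizontal circle and no closure circle, and its $C$-compatible term (upper labeled $1$, lower labeled $X$) contributes $0$ to the winding. Finally, your type III discussion never separates the cases where the circle $Z_\alpha$ is labeled $1$ versus $X$, and the $1$-labeled lower-platform case is the genuinely delicate one.

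The paper closes this gap without any global count: it follows the circle through the relevant platform point(s) step by step through the $n$ saddles and shows that either an upper horizontal ends up labeled $X$ (killed by $C$) or some elementary merge/split strictly drops the annular filtration (killed by $B$), the hard case being a type III circle through the lower platforms labeled $1$, where one rules out state by state every way for the two platform points to end on distinct essential circles both labeled $X$. If you want to keep your global formulation, the missing ingredient is topological rather than a count of forced labels: since every input circle of $A$ is null-homotopic in the annulus, the boundary of each connected component of the cobordism is null-homologous mod $2$, so each component has an \emph{even} number of essential output circles. Combining this parity fact with the ``all outputs $X$, or exactly one output labeled $1$'' Frobenius rule and the constraints imposed by $C$, one checks that in any $C$-compatible term every component contributes non-positively to the winding and the component containing the type II or type III configuration is either incompatible with $C$ or contributes at most $-2$; hence every term of $A(y)$ compatible with $C$ has winding $<0$ and dies under $B$. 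Without either that per-component argument or the paper's stepwise filtration analysis, your proof does not go through.
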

\begin{proof}
  Unsurprisingly, the proof is a case analysis. Fix a generator
  $y\in \CKTfuncIdealB{\Tan}{\bbk}{\bbk}(a,a)$. Suppose first that
  $(a\ou{\Tan}\Wmirror{a},y)$ has a type II circle $Z$ labeled $X$. If
  the circle passes through the upper platforms then $A_{a,\Tan}(y)$
  will have a upper horizontal circle labeled $X$, so
  $C(B(A_{a,\Tan}(y)))=0$. If the circle $Z$ passes through the lower
  platforms, notice that at some point in the saddles corresponding to
  $A_{a,\Tan}$, $Z$ either splits into two essential circles labeled
  $X$ or merges with an essential circle labeled $1$ to form an
  essential circle labeled $X$. In either case, the annular filtration
  strictly decreases, so $B(A_{a,\Tan}(y))=0$.
  
  Next, suppose $(a\ou{\Tan}\Wmirror{a},y)$ has a type III circle $Z$
  passing through the upper platforms. If $Z$ is labeled $X$ then
  $A_{a,\Tan}(y)$ will have an upper horizontal circle labeled $X$, so
  $C(B(A_{a,\Tan}(y)))=0$. More generally, let $P$ and $Q$ be two
  points on the intersection of $Z$ and one of the upper
  platforms. During the saddles in $A_{a,\Tan}$, if a circle
  containing $P$ or $Q$ is ever labeled $X$ then we have
  $C(B(A_{a,\Tan}(y)))=0$. However, since $P$ and $Q$ end up on
  separate circles, at some point the saddle cobordism must be a split
  with $P$ and $Q$ ending on opposite components. Since the split map
  sends $1$ to $1\otimes X+X\otimes 1$, one of these components will
  be labeled $X$.

  Finally, suppose $(a\ou{\Tan}\Wmirror{a},y)$ has a type III circle
  $Z$ passing through the lower platforms. If $Z$ is labeled $X$, the
  same analysis as in the type II circle case implies
  $C(B(A_{a,\Tan}(y)))=0$. So, suppose $Z$ is labeled $1$. Let $P$ and
  $Q$ be two points at the intersection of $Z$ and a lower
  platform. Eventually, both $P$ and $Q$ must lie on circles labeled
  $X$, or else $C(B(A_{a,\Tan}(y)))=0$. If $P$ (or $Q$) is ever on an
  inessential circle labeled $X$ then $P$ cannot later be on an
  essential circle labeled $X$ without decreasing the annular
  filtration. Similarly, if $P$ (or $Q$) is ever on an essential
  circle labeled $1$ then $P$ can never later be on an essential
  circle labeled $X$. If $P$ and $Q$ are on the same essential circle
  labeled $X$, then (using the previous two observations) there is no
  way for $P$ and $Q$ to end up on different essential circles labeled
  $X$. But now we have ruled out all possibilities: $P$ and $Q$ start
  on the same inessential circle labeled $1$, and the only changes
  that can happen are for them to next be on the same inessential
  circle labeled $X$, the same essential circle labeled $1$, the same
  essential circle labeled $X$, or different essential circles one of
  which is labeled $1$.
\end{proof}

\begin{definition}\label{def:BPW-gluing-map}
  By Lemma~\ref{lem:CBA-descends}, $C\circ B\circ A$ descends to a map
  \begin{equation}\label{eq:BPW-gluing-map-1}
    \Xi_0\co \bigoplus_{a\in\rCrossinglessB{n}{\bbk}}\CKTfuncB{\Tan}{\bbk}{\bbk}(a,a)\to \AKhCx(\anclose{\Tan};n-2k)\{n-2k\}
  \end{equation}
  which we call the \emph{annular gluing map}.
\end{definition}
If $\HC_*(\CKTalgB{n}{\bbk};\CKTfuncB{\Tan}{\bbk}{\bbk})$ denotes the standard Hochschild
complex, which is the total complex of the bicomplex
\begin{align*}
  \cdots&\to
          \bigoplus_{a_1,a_2,a_3\in\rCrossinglessB{n}{\bbk}}\CKTfuncB{\Tan}{\bbk}{\bbk}(a_1,a_2)\otimes_\ZZ\CKTalgB{n}{\bbk}(a_2,a_3)\otimes_\ZZ\CKTalgB{n}{\bbk}(a_3,a_1)
  \\          
  &\to \bigoplus_{a_1,a_2\in\rCrossinglessB{n}{\bbk}}\CKTfuncB{\Tan}{\bbk}{\bbk}(a_1,a_2)\otimes_\ZZ \CKTalgB{n}{\bbk}(a_2,a_1)
  \to
    \bigoplus_{a_1\in\rCrossinglessB{n}{\bbk}}\CKTfuncB{\Tan}{\bbk}{\bbk}(a_1,a_1),
\end{align*}
then there is an induced map
\[
  \Xi\co \HC_*(\CKTalgB{n}{\bbk};\CKTfuncB{\Tan}{\bbk}{\bbk})\to
  \AKhCx(\anclose{\Tan};n-2k)\{n-2k\}
\]
defined by projecting to $\CKTfuncB{\Tan}{\bbk}{\bbk}$ and then applying the
map $\Xi_0$ from Equation~\eqref{eq:BPW-gluing-map-1}.
\begin{lemma}\label{lem:Xi-chain-map}
  The map $\Xi$ is a chain map.
\end{lemma}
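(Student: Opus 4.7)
The plan is to split the chain map property into two commutations: compatibility of $\Xi$ with the internal Khovanov differential on each summand, and compatibility with the Hochschild face differential. The first is essentially automatic from the construction: $\Xi_0$ is the descent of $C\circ B\circ A$ through Lemma~\ref{lem:CBA-descends}, and each of $A$, $B$, $C$ is a chain map---$A$ by Proposition~\ref{prop:A-map}~\ref{item:A-chain-map}, and $B$ and $C$ by virtue of being projections to subquotients of the Khovanov complex.

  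For the Hochschild differential, I would organize the check by Hochschild degree. By construction $\Xi$ is supported on the degree-zero summand $\bigoplus_{a_1}\CKTfuncB{\Tan}{\bbk}{\bbk}(a_1,a_1)$, on which the Hochschild differential is zero. From Hochschild degree $k\geq 2$, the Hochschild differential lands in degree $\geq 1$ where $\Xi$ again vanishes, so there is nothing to check. The only non-trivial case is Hochschild degree one: for $y\otimes f\in \CKTfuncB{\Tan}{\bbk}{\bbk}(a,b)\otimes_\ZZ \CKTalgB{n}{\bbk}(b,a)$ I must show
  \[
    \Xi_0(y\cdot f)=\Xi_0(f\cdot y),
  \]
  where $y\cdot f\in \CKTfuncB{\Tan}{\bbk}{\bbk}(a,a)$ and $f\cdot y\in \CKTfuncB{\Tan}{\bbk}{\bbk}(b,b)$.

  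The key observation is that this is exactly an instance of Proposition~\ref{prop:A-map}~\ref{item:A-trace} with $\Tan_1=\Tan$ and $\Tan_2$ the flat identity $(n,n)$-tangle. Indeed, under Khovanov's description of the bimodule structure in terms of the canonical cobordism $\Wmirror{c}\amalg c\to \Id$, the element $y\cdot f$ is the image of $y\otimes f$ under the saddle cobordism $s_b$ on $a\ou{\Tan}\Wmirror{b}\amalg b\Wmirror{a}$, and $f\cdot y$ is the image under $s_a$ on the same $1$-manifold. The trace identity $A_{a,\Tan}\circ s_b=A_{b,\Tan}\circ s_a$ in $\KhCx(\anclose{\ou{\Tan}})$ then descends through $B$, $C$, and the quotient by $\CKTfuncIdealB{\Tan}{\bbk}{\bbk}$ (using Lemma~\ref{lem:CBA-descends}) to yield the equality displayed above.

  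The main obstacle is bookkeeping rather than anything conceptual: one must take care with grading shifts (the factor of $\{n\}$ appearing in $A$ versus $\{2n\}$ before the saddle) and with identifying the bimodule multiplication maps with the two saddle cobordisms, and confirm that the quotient by the Chen-Khovanov ideal is respected at each stage. Once these identifications are in place, both the sign in $d_{\mathrm{Hoch}}(y\otimes f)=y\cdot f-f\cdot y$ and the descent through the quotient follow immediately, completing the verification.
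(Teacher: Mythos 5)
Your proposal is correct and follows essentially the same route as the paper: the paper likewise observes that $\Xi_0$ is a chain map because $A$, $B$, $C$ are (Item~\ref{item:A-chain-map} of Proposition~\ref{prop:A-map}), and kills the image of the Hochschild face differential from degree one, citing Item~\ref{item:A-coinv} where you invoke Item~\ref{item:A-trace} with $\Tan_2$ the identity braid---but these are the same fact, since the paper notes Item~\ref{item:A-coinv} is a special case of Item~\ref{item:A-trace}. Your degree-by-degree bookkeeping and the identification of $y\cdot f$ and $f\cdot y$ with the images under $s_b$ and $s_a$ are exactly the details the paper leaves implicit.
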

\begin{proof}
  This follows from the fact that $\Xi_0$ is a chain map (since $A$ is
  a chain map by Item~\ref{item:A-chain-map} of
  Proposition~\ref{prop:A-map} and it is immediate from their
  definitions that $B$ and $C$ are chain maps), and
  Item~\ref{item:A-coinv} of Proposition~\ref{prop:A-map}, which
  implies that $A$ vanishes on the image of $\bigoplus_{a_1,a_2\in\rCrossinglessB{n}{\bbk}}\CKTfuncB{\Tan}{\bbk}{\bbk}(a_1,a_2)\otimes_\ZZ \CKTalgB{n}{\bbk}(a_2,a_1)$.
\end{proof}

\begin{lemma}\label{lem:Xi-trace}
  Let $\Tan_1$ be an $(m,n)$-tangle and $\Tan_2$ and $(n,m)$-tangle. Then for any $h,k$ with $m-n=2(h-k)$, the following diagram commutes:
  \[
    \xymatrix{
      \HC_*(\CKTalg{m}{m-h}{h};\CKTfunc{\Tan_1\Tan_2}{m-h}{h}{m-h}{h})\ar[r]^-{\simeq}\ar[d]_{\Xi} & \HC_*(\CKTalg{n}{n-k}{k};\CKTfunc{\Tan_2\Tan_1}{n-k}{k}{n-k}{k})\ar[d]^{\Xi}\\
      \AKhCx(\anclose{\Tan_1\Tan_2};m-2h)\{m-2h\}\ar[r]_-{\cong} & \AKhCx(\anclose{\Tan_2\Tan_1};n-2k)\{n-2k\}.
    }
  \]
  Here, the top horizontal map is induced by Theorem~\ref{thm:CK-pairing} and
  cyclic symmetry of Hochschild homology and the bottom by the fact that the
  closures of $\Tan_1\Tan_2$ and $\Tan_2\Tan_1$ are isotopic links (in fact,
  link diagrams) in the annulus. (Note that $m-2h=n-2k$.)
\end{lemma}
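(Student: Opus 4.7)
The plan is to reduce the commutativity to Item~\ref{item:A-trace} of Proposition~\ref{prop:A-map}. First, both vertical maps are chain maps by Lemma~\ref{lem:Xi-chain-map}, and by construction $\Xi$ factors as the projection to the Hochschild degree-$0$ summand followed by $\Xi_0$; hence it suffices to verify commutativity on degree-$0$ cycles, modulo Hochschild boundaries. Using Theorem~\ref{thm:CK-pairing} together with Lemma~\ref{lem:sweet}, every class in $\HC_0$ of the Hochschild complex for $\Tan_1\Tan_2$ is represented by (a sum of) elements of the form $\ol{\Psi}(y_1 \otimes y_2)$, where $y_1 \in \CKTfunc{\Tan_1}{m-h}{h}{n-k}{k}(a,c)$ and $y_2 \in \CKTfunc{\Tan_2}{n-k}{k}{m-h}{h}(c,a)$; and the top horizontal isomorphism carries the class of $\ol{\Psi}(y_1 \otimes y_2)$ to the class of $\ol{\Psi}(y_2 \otimes y_1) \in \CKTfunc{\Tan_2\Tan_1}{n-k}{k}{n-k}{k}(c,c)$ by the standard cyclic symmetry of Hochschild homology applied to a tensor product of bimodules.

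Next, I would unwind the two sides of the diagram explicitly. Going down-then-right (apply the left $\Xi$, then the annular isotopy at the bottom), the image of $\ol{\Psi}(y_1 \otimes y_2)$ is
\[
C\bigl(B\bigl(A_{a,\Tan_1\Tan_2}(\ol{\Psi}(y_1 \otimes y_2))\bigr)\bigr),
\]
transported across the isotopy. Going right-then-down (apply the top cyclic symmetry, then the right $\Xi$), the image is $C(B(A_{c,\Tan_2\Tan_1}(\ol{\Psi}(y_2 \otimes y_1))))$. Since $\ol{\Psi}$ is induced by a canonical saddle cobordism $\Wmirror{c}\amalg c \to \Id$ (respectively $\Wmirror{a}\amalg a \to \Id$) and $A_{a,\Tan_1\Tan_2}$ is induced by a further saddle $\Wmirror{a}\amalg a \to \Id$ (and similarly on the other side), the composites $A_{a,\Tan_1\Tan_2} \circ \ol{\Psi}$ and $A_{c,\Tan_2\Tan_1} \circ \ol{\Psi}$ realize the two paths around the square in Item~\ref{item:A-trace} of Proposition~\ref{prop:A-map}, applied to the element $y_1 \otimes y_2 \in \KhCx(a\ou{\Tan_1}\Wmirror{c} \amalg c\ou{\Tan_2}\Wmirror{a})$. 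That square commutes in $\KhCx(\anclose{\ou{\Tan_1\Tan_2}})$, so after applying $B \circ C$ the two outputs remain equal.

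The remaining ingredient is that the annular isotopy identifying $\anclose{\Tan_1\Tan_2}$ with $\anclose{\Tan_2\Tan_1}$ is compatible with the projections $B$ (which only sees the winding-number filtration) and $C$ (which only sees the labels on the upper and lower horizontal circles); this follows because the rotational isotopy of the annulus preserves both the winding-number filtration and the combinatorial set of horizontal circles (carrying the platforms $P_L$ and $P_U$ on one side to their counterparts on the other). I expect the main obstacle to be the bookkeeping of signs, quantum gradings, and the cyclic symmetry at the level of the bar complex, together with checking that all identifications descend cleanly through the platform-ideal quotients; the geometric heart of the argument is entirely captured by Item~\ref{item:A-trace} of Proposition~\ref{prop:A-map}.
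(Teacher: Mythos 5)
Your argument is correct and is essentially the paper's: the paper's entire proof consists of citing the definitions together with Part~\PartCont{A-trace} of Proposition~\ref{prop:A-map}, which is exactly the geometric input you isolate, and your additional steps (surjectivity of the gluing map from the proof of Theorem~\ref{thm:CK-pairing}, descent through the platform ideals, and the observation that the two annular closures are the same diagram) are precisely the elided ``definitions.'' The only cosmetic slips are writing $B\circ C$ where you mean $C\circ B$, and that in reducing to Hochschild degree $0$ you should note that the standard zig-zag realizing cyclic symmetry sends positive simplicial degree to positive simplicial degree, so after composing with $\Xi$ (which projects to degree $0$) only the degree-$0$ part you analyze can contribute.
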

\begin{proof}
  This follows from the definitions and Item~\ref{item:A-trace} of
  Proposition~\ref{prop:A-map}.
\end{proof}

\begin{lemma}\label{lem:HH-remove-U}
  Let $\Tan$ be an $(n,n)$-tangle diagram which is the union of an
  $(n,n)$-tangle $\Tan'$ and an unknotted circle $U$ disjoint from $\Tan'$. Then
  the following diagram commutes:
  \[
    \xymatrix{
      \HC_*(\CKTalgB{n}{\bbk};\CKTfuncB{\Tan}{\bbk}{\bbk})\ar[r]^-\cong\ar[d]_{\Xi} & \HC_*(\CKTalgB{n}{\bbk};\CKTfuncB{\Tan'}{\bbk}{\bbk})\otimes V\ar[d]^{\Xi\otimes\Id}\\
      \AKhCx(\anclose{\Tan})\{n-2k\}\ar[r]_-\cong & \AKhCx(\anclose{\Tan'})\{n-2k\}\otimes V.
    }
  \]
  Here, the top horizontal map is induced by the obvious isomorphism $\CKTfuncB{\Tan}{\bbk}{\bbk}\cong \CKTfuncB{\Tan'}{\bbk}{\bbk}\otimes V$ and the bottom horizontal map is also the obvious isomorphism.
\end{lemma}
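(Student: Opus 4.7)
The plan is to check that the annular gluing map $\Xi$ respects the splitting off of $U$ at every stage of its definition: at the level of generators, at the level of the saddle map $A$, at the level of the filtration projection $B$, and at the level of the projection/forgetting map $C$. Since $U$ is disjoint from $T'$, we may arrange it to lie in a small disk disjoint from both platforms and from $T'$, so $U$ contributes a single circle (disjoint from every platform, every strand of $T'$, and the puncture of the annulus) to each of the relevant $1$-manifolds $a\ou{\Tan}\Wmirror{a}$ and $\anclose{\ou{\Tan}}$. In particular, $U$ is neither a type II nor a type III circle in any $a\ou{\Tan}\Wmirror{a}$, nor an upper/lower horizontal circle nor a winding circle in $\anclose{\ou{\Tan}}$.

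First I would identify the two horizontal isomorphisms. On the top, for each $a\in\rCrossinglessB{n}{\bbk}$ there is a tautological isomorphism $\KTfunc{\ou{\Tan}}(a,a)\cong\KTfunc{\ou{\Tan'}}(a,a)\otimes V$, where the $V$ factor records the label of the circle $U$. Since $U$ is not a type II or III circle, $\CKTfuncIdealB{\Tan}{\bbk}{\bbk}(a,a)$ corresponds to $\CKTfuncIdealB{\Tan'}{\bbk}{\bbk}(a,a)\otimes V$, so the isomorphism descends to the Chen--Khovanov quotient. The $\CKTalgB{n}{\bbk}$-bimodule structure acts trivially on the $V$ factor, so this isomorphism extends to an isomorphism of the full Hochschild bicomplex tensor-factored by $V$, giving the top map. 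Similarly, on the bottom $\anclose{\ou{\Tan}}=\anclose{\ou{\Tan'}}\amalg U$, and since $U$ is nullhomotopic in the annulus and disjoint from $T'$, the Khovanov complex splits off a tensor factor $V$ that lies entirely in annular filtration $0$, giving the bottom isomorphism.

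Next I would check the compatibility stage by stage. The saddles defining $A_{a,\Tan}$ only involve arcs of $\ou{\Tan}$ and the arcs of $\Wmirror{a}\amalg a$, none of which touch $U$, so $A_{a,\Tan}=A_{a,\Tan'}\otimes\Id_V$. The filtration projection $B$ also respects the tensor splitting since the two labelings of $U$ both contribute annular winding $0$, so each $B$-filtration subspace factors as $(\text{subspace for }\Tan')\otimes V$. Finally, $C$ only projects away the upper and lower horizontal circles, none of which is $U$, and then forgets those circles; hence $C$ is of the form $C_{\Tan'}\otimes\Id_V$. Composing, $\Xi_0$ splits as $\Xi_{0,\Tan'}\otimes\Id_V$, and summing over $a$ and extending to the Hochschild complex gives $\Xi=\Xi_{\Tan'}\otimes\Id_V$, which is the desired commutativity.

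The only slightly subtle point is the compatibility with the Chen--Khovanov quotient and with the Hochschild differentials, but both follow from the observation above that $U$ never participates in any of the ``type II/III'' conditions nor in the bar differential on $\CKTalgB{n}{\bbk}$, so the $V$ factor is inert at every step.
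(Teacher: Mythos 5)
Your argument is correct and is exactly the verification the paper leaves implicit (its own proof of this lemma is just ``immediate from the definitions''): since $U$ is never a type II or III circle, never an upper or lower horizontal circle, and has winding number zero, the $V$ factor is inert under $A$, $B$, $C$, the Chen--Khovanov quotient, and the Hochschild differentials, so $\Xi$ factors as $\Xi_{\Tan'}\otimes\Id_V$. (The preliminary step of isotoping $U$ into a small disk is unnecessary---and not always achievable in the plane without crossing $\Tan'$---but nothing in your argument actually uses it, since disjointness from the platforms and winding number zero hold regardless of where $U$ sits.)
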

\begin{proof}
  This is immediate from the definitions.
\end{proof}

\begin{lemma}\label{lem:HH-id-braid}
  Suppose $\Tan$ is the $(n,n)$-tangle diagram consisting of $n$ horizontal
  strands. Then the map
  $\Xi_*\co \HH_*(\CKTalgB{n}{\bbk};\CKTfuncB{\Tan}{\bbk}{\bbk})\to
  \AKh(\anclose{\Tan};n-2k)\{n-2k\}$ is an isomorphism.
\end{lemma}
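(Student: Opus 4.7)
For the identity $n$-braid $\Tan$, the added horizontal strands of $\ou{\Tan}$ collapse cleanly so that $a\ou{\Tan}\Wmirror{b}=a\Wmirror{b}$ as flat $1$-manifolds, and the submodule $\CKTfuncIdealB{\Tan}{\bbk}{\bbk}$ coincides with the ideal $\CKTidealB{n}{\bbk}$. Thus $\CKTfuncB{\Tan}{\bbk}{\bbk}$ is the diagonal bimodule, giving $\HH_*(\CKTalgB{n}{\bbk};\CKTfuncB{\Tan}{\bbk}{\bbk})=\HH_*(\CKTalgB{n}{\bbk})$. By Theorem~\ref{thm:BPW} this is abstractly isomorphic to $\AKh(\anclose{\Tan};n-2k)\{n-2k\}$, which—since $\anclose{\Tan}$ is $n$ disjoint essential circles—is concentrated in homological degree $0$ and free of rank $\binom{n}{k}$, with a standard $\ZZ$-basis indexed by labelings of the circles by $n-k$ copies of $v_+$ and $k$ copies of $v_-$. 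So both sides of $\Xi_*$ are free abelian groups of the same rank in degree zero, and it suffices to show $\Xi_0$ is surjective.

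For each $a\in\rCrossinglessB{n}{\bbk}$ the identity morphism $\mathbf{1}_a\in\CKTalgB{n}{\bbk}(a,a)$ is a Hochschild $0$-cycle, and $|\rCrossinglessB{n}{\bbk}|=\binom{n}{k}$. The main step is to compute $\Xi([\mathbf{1}_a])$ explicitly. Lifting $\mathbf{1}_a$ to the all-$1$ labeling of $a\ou{\Tan}\Wmirror{a}$: for the identity braid this planar $1$-manifold decomposes into exactly $n$ circles, one per arc of $a$, each containing the two strands matched by that arc. Correspondingly, the saddle cobordism $A_{a,\Tan}$ decomposes into $n$ genus-zero pair-of-pants components, each with a single input circle and two output annular circles. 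Property~\ref{item:Psi-mmorph} identifies the Khovanov map on each pair of pants (on the all-$1$ input) as the split $1\mapsto 1\otimes X+X\otimes 1$ on its two outputs.

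The projections $B$ (to winding zero) and $C$ (upper circles labeled $1$, lower circles labeled $X$, then forgotten) filter the output of each pair of pants according to the type of its underlying arc in $a$. Under the convention $1\leftrightarrow v_+$ and $X\leftrightarrow v_-$: a bottom-middle arc contributes a middle circle labeled $v_+$; a top-middle arc contributes one labeled $v_-$; a bottom-top arc contributes no middle output; and a middle-middle arc contributes both $v_+\otimes v_-$ and $v_-\otimes v_+$ to its two middle outputs. Consequently $\Xi([\mathbf{1}_a])$ is the sum of $2^{m(a)}$ standard basis vectors of $\AKh(\anclose{\Tan};n-2k)$, where $m(a)$ is the number of middle-middle arcs in $a$. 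In particular, if $m(a)=0$ then $\Xi([\mathbf{1}_a])$ is a single standard basis vector.

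To conclude, one shows that the collection $\{\Xi([\mathbf{1}_a])\}_{a\in\rCrossinglessB{n}{\bbk}}$ $\ZZ$-spans $\AKh(\anclose{\Tan};n-2k)$; equivalently, the $\binom{n}{k}\times\binom{n}{k}$ transition matrix to the standard basis is unimodular. A direct calculation in the $n=3$, $k=1$ case yields a transition matrix of determinant $-1$. In general one argues by induction on $m(a)$: matchings with $m(a)=0$ produce single basis vectors, and each matching with $m(a)>0$ produces a sum whose ``new'' basis vector can be isolated by subtracting contributions from matchings with strictly smaller $m$. Once unimodularity is established, $\Xi_0$ is surjective between free abelian groups of the same finite rank and hence an isomorphism. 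The main technical obstacle is carrying out this induction cleanly for arbitrary $n$ and $k$—identifying, for each matching, a canonical ``new'' labeling and tracking signs through the pair-of-pants decomposition—but the explicit formula for $\Xi([\mathbf{1}_a])$ derived above makes the combinatorics tractable.
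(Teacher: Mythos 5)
Your setup and the computation of $\Xi_0$ on the idempotents agree with the paper's proof: both arguments invoke Theorem~\ref{thm:BPW} to get an abstract isomorphism, reduce (since the identity tangle is flat, so the complex has no differential) to showing $\Xi_0$ is surjective, evaluate $\Xi_0$ on the all-$1$ generators $y_a$ via the pair-of-pants splittings, and correctly find that $\Xi_0(y_a)$ is a sum of $2^{m(a)}$ standard generators, where $m(a)$ counts the arcs of $a$ with both endpoints off the platforms. The gap is in your final step, which is both admittedly unfinished and organized around the wrong invariant. Your proposed induction ``on $m(a)$, subtracting contributions from matchings with strictly smaller $m$'' is not well-founded: the non-leading terms in $\Xi_0(y_a)$ need not come from matchings with smaller $m$. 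Already for $n=3$, $k=1$ (so $\bbk=(2,1)$, points $1,2$ on the lower platform and $6$ on the upper), the three matchings are $\{(1,4),(2,3),(5,6)\}$, $\{(1,6),(2,3),(4,5)\}$, $\{(1,6),(2,5),(3,4)\}$, with $m=0,1,1$ respectively; for the last one, $\Xi_0(y_a)$ is the sum of the generators corresponding to the subsets $\{1\}$ and $\{2\}$, and the unique matching $b$ with $S_b=\{2\}$ has $m(b)=1=m(a)$, not strictly smaller. So the elimination scheme you describe does not terminate by induction on $m$, and ``tracking signs through the pair-of-pants decomposition'' is a red herring: there are no signs here, the issue is purely one of triangularity.

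The correct organizing principle, and the one the paper uses, is the canonical bijection $a\mapsto S_a$ between $\rCrossinglessB{n}{\bbk}$ and $k$-element subsets of $\{1,\dots,n\}$ (the strands matched to a higher point), together with the componentwise (dominance) partial order $\preceq$ on such subsets. Your own case analysis already shows that each non-leading term of $\Xi_0(y_a)$ is obtained from $S_a$ by replacing some $i\in S_a$ by a larger $j\notin S_a$ lying on the same middle-middle arc, hence is strictly larger in $\preceq$. Therefore the matrix of $\Xi_0$ restricted to the span of the $y_a$ is unipotent upper-triangular with respect to any linear extension of $\preceq$, so it is an isomorphism over $\ZZ$ and $\Xi_0$ is surjective; combined with the Beliakova--Putyra--Wehrli isomorphism this finishes the proof. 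With that replacement for your ad hoc induction, the argument closes and coincides with the paper's.
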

\begin{proof}
  Let $\mc{S}$ be the set of all subsets of $\{1,2,\dots,n\}$ of size
  $k$. Define a partial order $\preceq$ on $\mc{S}$ by declaring
  $S=\{s_1<s_2<\dots<s_k\}\preceq T=\{t_1<t_2<\dots<t_k\}$ if $s_i\leq
  t_i$ for all $1\leq i\leq k$.

  Since $\Tan$ is a flat tangle,
  $\AKh(\anclose{\Tan};n-2k)=\AKhCx(\anclose{\Tan};n-2k)$. Moreover,
  in order to have winding number grading $n-2k$, exactly $k$ of the
  $n$ circles in $\anclose{\Tan}$ have to be labeled $X$. Therefore,
  after numbering the strands of $T$ by $1,2,\dots,n$ from bottom to
  top, $\AKh(\anclose{\Tan};n-2k)$ can be identified with the free
  $\ZZ$-module generated by $\mc{S}$: the generator corresponding to
  $S\in\mc{S}$ labels the circles in $S$ by $X$, and the remaining
  circles by $1$. We will view $\AKh(\anclose{\Tan};n-2k)$ as a filtered group, with the
  filtration given by the partial order $\preceq$ on $\mc{S}$.

  Since Beliakova-Putyra-Wehrli have already established that
  $\HH_*(\CKTalgB{n}{\bbk};\CKTfuncB{\Tan}{\bbk}{\bbk})\cong
  \AKh(\anclose{\Tan};n-2k)\{n-2k\}$, it is enough to show that the map
  $\Xi_*\co \HH_*(\CKTalgB{n}{\bbk};\CKTfuncB{\Tan}{\bbk}{\bbk})\to
  \AKh(\anclose{\Tan};n-2k)\{n-2k\}$ is surjective. Since $T$ is a flat
  tangle, the chain complex
  $\bigoplus_{a\in\rCrossinglessB{n}{\bbk}}\CKTfuncB{\Tan}{\bbk}{\bbk}(a,a)$
  has no differential, so it is enough to show that the annular gluing
  map $\Xi_0$ from Definition~\ref{def:BPW-gluing-map} is
  surjective. Given $a\in\rCrossinglessB{n}{\bbk}$, let $y_a$ be the
  generator of $\CKTfuncB{\Tan}{\bbk}{\bbk}(a,a)$ where each circle of
  $a\ou{\Tan}\Wmirror{a}$ is labeled $1$. Let
  \[
    M=\langle\{y_a\mid a\in \rCrossinglessB{n}{\bbk}\}\rangle\subset \bigoplus_{a\in\rCrossinglessB{n}{\bbk}}\CKTfuncB{\Tan}{\bbk}{\bbk}(a,a).
  \]
  We will show that $\Xi_0|_M$ is an isomorphism (and therefore,
  $\Xi_0$ is surjective).

  Recall that $\rCrossinglessB{n}{\bbk}$ is in canonical bijection
  with $\mc{S}$~\cite[\S6]{CK-kh-tangle}, as follows: for any
  $a\in\rCrossinglessB{n}{\bbk}$, the corresponding element
  $S_a\in\mc{S}$ is the subset of the $n$ non-platform points
  (numbered $1,2,\dots,n$ from bottom to top) which are matched to a
  higher point by $a$. Therefore, $\preceq$ induces a filtration
  $\preceq$ on $M$ by $y_a\preceq y_b$ if and only if $S_a\preceq S_b$.

  So, the generators of $M=\ZZ\langle \{y_a\}\rangle$ and
  $\AKh(\anclose{\Tan};n-2k)=\ZZ\langle\{S_a\}\rangle$ are in
  bijection with each other, via $y_a\leftrightarrow S_a$.  We will
  prove $\Xi_0|_M$ is a filtered map, and the associated graded piece
  of the map sends each generator of $M$ to the corresponding
  generator of $\AKh(\anclose{\Tan};n-2k)$; it follows that $\Xi_0|_M$
  is an isomorphism.
  
  Consider a generator $y_a$ of $M$.
  Recall that $\Xi_0$ is a composition of
  three maps, $C\circ B\circ A$. The map $A$ is a composition of $n$
  splits, each splitting a non-essential circle labeled
  $1$. Therefore, $A$ preserves the winding number grading, and so we
  do not need the map $B$. The map $C$ projects onto the summand where
  each circle passing through the lower (respectively upper) platform
  is labeled $X$ (respectively $1$). 

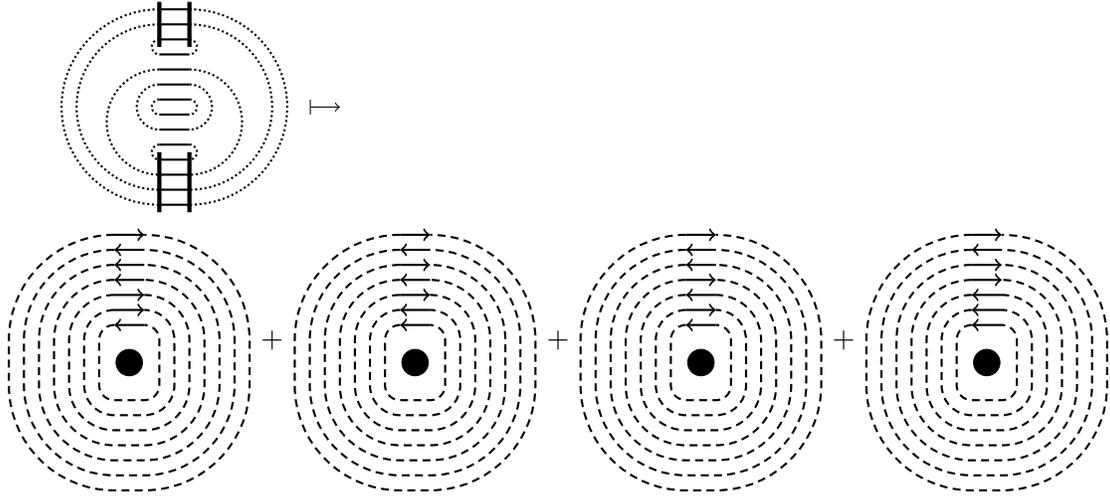
\begin{figure}
  \begin{tikzpicture}[scale=0.2]
    \tikzset{platform/.style={ultra thick}}
    \tikzset{matchings/.style={thick,densely dotted}}
    \tikzset{tangle/.style={thick}}
    \tikzset{anclose/.style={thick,densely dashed}}

    \begin{scope}[yshift=12cm,xshift=12cm]
      \foreach \i in {-3,-2,...,10}{
        \draw[tangle] (0,\i) --++(2,0);
      }

      \foreach \i in {0,2}{
        \draw[platform] (\i,-3.5)--(\i,0.5);
        \draw[platform] (\i,7.5)--(\i,10.5);
      }
      
      \foreach \d/\j in {1/4,3/5,1/8,1/1,7/6,11/9,13/10}{
        \draw[matchings] (0,\j) arc (90:270:0.5*\d cm);
        \draw[matchings] (2,\j) arc (90:-90:0.5*\d cm);
      }
    \draw[|->] (10,3.5)--++(2,0);
    \end{scope}

    \foreach \i in {1,2,3}{
      \node at (0.5cm+19*\i cm,0) {$+$};}

    \foreach \a/\b/\c/\d [count=\cc from 1] in {2/3/4/5,5/3/4/2,2/4/3/5,5/4/3/2}{

    \begin{scope}[xshift=-10cm+19*\cc cm]
      \draw[tangle,<-] (0,1) --++(2,0);
      \draw[tangle,<-] (0,6) --++(2,0);
      \draw[tangle,->] (0,7) --++(2,0);
      
      \draw[tangle,->] (0,\a) --++(2,0);
      \draw[tangle,->] (0,\b) --++(2,0);
      \draw[tangle,<-] (0,\c) --++(2,0);
      \draw[tangle,<-] (0,\d) --++(2,0);

      \foreach \j in {1,2,...,7}{
        \draw[anclose] (0,\j) arc (90:180:\j cm) --++(0,-3) arc (180:270:\j cm) --++(2,0) arc(-90:0:\j cm)--++(0,3) arc(0:90:\j cm);
      }
      
      \draw[fill] (1,-1.5) circle [radius=.875];

    \end{scope}
  }

  \end{tikzpicture}
  \caption{\textbf{An example of the map $\Xi_0|_M$.} Here, $n=7,k=3$
    and we are starting with generator $y_a$ corresponding to the subset
    $S_a=\{2,3,7\}$. The map sends $y_a$ to a sum of four generators,
    corresponding to the subsets $\{2,3,7\}$, $\{3,5,7\}$,
    $\{2,4,7\}$, and $\{4,5,7\}$. (Circles labeled $1$ are oriented
    counter-clockwise and circles labeled $X$ are oriented
    clockwise. The arcs of $\ou{\Tan}$ are solid, the platforms
    are thick, the arcs in $a$ and $\Wmirror{a}$ are
    dotted, and the new arcs in the annular closure $\anclose{\Tan}$
    are dashed.)}\label{fig:identity-tangle-isomorphism}
\end{figure}

  Circles in $a\ou{\Tan}\Wmirror{a}$ are of the following four types.
  \begin{itemize}[leftmargin=*]
  \item Circles $Z$ that pass through both the upper and the lower
    platform. Under the map $\Xi_0$, $Z$ splits into an upper
    horizontal circle labeled $1$ and a lower horizontal circle
    labeled $X$, which are then forgotten.
  \item Circles $Z$ that pass through only the upper platform. Assume
    $Z$ contains the $i\th$ strand of $T$, and consequently, $i\in
    S_a$. Under the map $\Xi_0$, $Z$ splits into an upper horizontal
    circle labeled $1$ (which is then forgotten) and the $i\th$
    component of $\anclose{\Tan}$ labeled $X$.
  \item Circles $Z$ that pass through only the lower platform. Assume
    $Z$ contains the $i\th$ strand of $T$, and consequently, $i\in
    \{1,2,\dots,n\}\setminus S_a$. Under the map $\Xi_0$, $Z$ splits
    into a lower horizontal circle labeled $X$ (which is then
    forgotten) and the $i\th$ component of $\anclose{\Tan}$ labeled
    $1$.
  \item Circles $Z$ that are disjoint from the platforms. Assume $Z$
    contains the $i\th$ and $j\th$ strand of $T$, with $i<j$, and
    consequently, $i\in S_a$ and $j\in\{1,2,\dots,n\}\setminus
    S_a$. Under the map $\Xi_0$, $Z$ splits into the $i\th$ and $j\th$
    component of $\anclose{\Tan}$, one labeled $1$ and the other
    labeled $X$.
  \end{itemize}
  In the first three cases, the map $\Xi_0$ sends the (component of the) generator $y_a$
  to the corresponding (component of the) generator $S_a$. In the last case, $\Xi_0$
  sends $y_a$ to a sum of two generators---one corresponding to the
  same subset $S_a$ and one corresponding to
  $T=S_a\cup\{j\}\setminus\{i\}$---and we have $S_a\prec
  T$. Therefore, the map increases or preserves the filtration given
  by $\preceq$, and the associated graded piece of the map sends each
  generator to the corresponding generator. See
  Figure~\ref{fig:identity-tangle-isomorphism} for an example of this
  map.
\end{proof}

\begin{theorem}\label{thm:Xi-is-BPW}
  The map $\Xi$ induces the isomorphism from Theorem~\ref{thm:BPW}.
\end{theorem}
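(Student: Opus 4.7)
My plan is to show that $\Xi_*$ is an isomorphism for every $(n,n)$-tangle $\Tan$; combined with \Theorem{BPW}, this identifies $\Xi$ with the Beliakova--Putyra--Wehrli isomorphism. The strategy is to use the naturality properties of $\Xi$ already established to reduce to the identity-braid case of \Lemma{HH-id-braid}.

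The outer induction is on the number of crossings of $\Tan$. At each crossing, the Khovanov cube yields a short exact sequence of Chen--Khovanov bimodule complexes
\[
  0 \to \CKTfuncB{\Tan_0}{\bbk}{\bbk} \to \CKTfuncB{\Tan}{\bbk}{\bbk} \to \CKTfuncB{\Tan_1}{\bbk}{\bbk}[1]\{-1\} \to 0
\]
and an analogous short exact sequence of annular Khovanov complexes for the corresponding annular closures $\anclose{\Tan_0}$, $\anclose{\Tan}$, $\anclose{\Tan_1}$. Since $\Xi = C \circ B \circ A$ is assembled from the saddle cobordism $A$, the annular filtration projection $B$, and the label projection $C$, each defined resolution-by-resolution, one checks directly that $\Xi$ intertwines these sequences. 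Hochschild homology, being a derived functor, converts short exact sequences of bimodule complexes into long exact sequences, so the five lemma reduces the inductive step to the case of flat tangles.

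For a flat $\Tan$, I would first apply \Lemma{HH-remove-U} to strip off any disjoint closed components, leaving a planar matching of the $4n$ boundary points. Since the Chen--Khovanov bimodule then has trivial differential and both sides of the theorem are free abelian groups of the same rank by \Theorem{BPW}, it suffices to show that the annular gluing map $\Xi_0$ of \Definition{BPW-gluing-map} is surjective. Generalizing the filtered analysis in the proof of \Lemma{HH-id-braid}---with the poset of $k$-element subsets of $\{1,\ldots,n\}$ replaced by a poset adapted to the matching structure of $\Tan$---one shows $\Xi_0$ is filtered with identity associated graded, hence an isomorphism. The main obstacle is constructing such a filtration in the general flat case: one must simultaneously keep track of labelings of platform-adjacent circles and of the interactions arising from cap-cap cancellation in the annular closure. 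An alternative reduction, available whenever $\Tan$ has at least two cap-cap pairs, uses \Lemma{Xi-trace} (applied with the strand-count decreasing by two, which preserves the integrality of the weights $\bbk$) to cyclically rotate a two-cap piece across $\Tan$, and then \Lemma{HH-remove-U} to strip the resulting disjoint unknots; iterating reduces to the identity braid, to which \Lemma{HH-id-braid} applies.
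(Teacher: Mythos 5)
Your ``alternative reduction'' is essentially the paper's proof, and you should promote it to the main argument rather than leaving it as a fallback. The paper handles a general tangle exactly as you suggest in spirit: it filters the Hochschild complex and the annular complex by the cube grading and runs the comparison spectral sequence (your crossing-by-crossing short exact sequences plus the five lemma amount to the same reduction to flat resolutions). For a flat $(n,n)$-tangle it strips closed components with \Lemma{HH-remove-U} and then, if what remains is not the identity braid, writes $\Tan\simeq\Tan_1\Tan_2$ with the middle level having $m<n$ points, invokes \Lemma{Xi-trace} to replace $\Tan_1\Tan_2$ by the $(m,m)$-tangle $\Tan_2\Tan_1$, and inducts on $n$; only the identity braid requires the hands-on filtered computation, which is \Lemma{HH-id-braid}.

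Two corrections to your write-up. First, your primary plan---extending the filtration argument of \Lemma{HH-id-braid} to an arbitrary flat tangle---is precisely the step you concede you cannot carry out, and it is not needed: no such general filtration is constructed in the paper, and surjectivity of $\Xi_0$ is only ever verified for the identity braid. Second, the hypothesis ``at least two cap-cap pairs'' for the rotation trick is both unnecessary and, read literally, would leave uncovered exactly the flat tangles with a single left-return and a single right-return arc (e.g.\ Temperley--Lieb generators), which would then fall back on the unproven filtration argument. In fact one return arc suffices: a flat $(n,n)$-tangle with no closed components is either the identity braid or has an innermost arc with both endpoints on the right, which can be factored off so that the middle level has $m<n$ strands; \Lemma{Xi-trace} then applies (the weight bookkeeping is automatic since $m\equiv n\pmod 2$ and $m-2h=n-2k$), any closed components produced by the rotation are removed by \Lemma{HH-remove-U}, and the induction on $n$ terminates at \Lemma{HH-id-braid}. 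With these adjustments your outline coincides with the paper's proof.
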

\begin{proof}
  First, assume that $\Tan$ is a flat $(n,n)$-tangle. We prove the
  result by induction on $n$. By Lemma~\ref{lem:HH-remove-U}, we can
  assume that $\Tan$ has no closed components. So, if $n=0$, $\Tan$ is
  empty and the result is trivial. Next, for general $n$, if $\Tan$ is
  the identity braid, the result is
  Lemma~\ref{lem:HH-id-braid}. Otherwise, we can decompose $\Tan$ as
  $\Tan_1\Tan_2$ where $\Tan_1$ is a flat $(n,m)$-tangle, $\Tan_2$ is
  a flat $(m,n)$-tangle, and $m<n$. By Lemma~\ref{lem:Xi-trace}, $\Xi$
  is an isomorphism for $\Tan_1\Tan_2$ if and only if $\Xi$ is an
  isomorphism for $\Tan_2\Tan_1$ which is true by induction.

  Next, for a general tangle $\Tan$, note that each generator of
  $\HC_*(\CKTalgB{n}{\bbk};\CKTfuncB{\Tan}{\bbk}{\bbk})$ lies over some vertex
  $v$ of the cube.  Consider the filtrations on
  $\HC_*(\CKTalgB{n}{\bbk};\CKTfuncB{\Tan}{\bbk}{\bbk})$ and
  $\AKhCx(\anclose{\Tan};n-2k)$ by $|v|$, the grading
  on the cube. The map $\Xi$ respects this filtration and, by the previous
  case, induces an isomorphism at the $E_1$-page of the associated
  spectral sequence. Thus, $\Xi$ is a quasi-isomorphism, as desired.
\end{proof}

We give a spectral refinement of this result. Before stating the main theorem, we observe:
\begin{lemma}
  The topological Hochschild homology of $\CKTSpecBimB{\Tan}{\bbk}{\bbk}$ is an
  invariant of the annular closure of $\Tan$.
\end{lemma}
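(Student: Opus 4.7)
The plan is to show that $\THH(\CKTSpecBimB{\Tan}{\bbk}{\bbk})$ is unchanged under the two kinds of moves that relate any two $(n,n)$- (or more generally $(m,m)$-) tangle diagrams having the same annular closure: (i) planar Reidemeister moves and planar isotopies of the diagram, and (ii) \emph{cyclic (trace) moves}, i.e., decomposing $\Tan = \Tan_1 \Tan_2$ with $\Tan_1$ an $(m,n)$-tangle and $\Tan_2$ an $(n,m)$-tangle, and replacing $\Tan_1\Tan_2$ by $\Tan_2\Tan_1$. Any two tangle diagrams with isotopic annular closures are connected by a sequence of moves of type (i) and (ii); this is a standard Markov-type statement for tangles in the annulus which we take as input.

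For moves of type (i), Theorem~\ref{thm:invariance} produces a weak equivalence of spectral $(\CKTSpecCatB{n}{\bbk},\CKTSpecCatB{n}{\bbk})$-bimodules
$\CKTSpecBimB{\Tan_1}{\bbk}{\bbk} \simeq \CKTSpecBimB{\Tan_2}{\bbk}{\bbk}$, and since THH is invariant under equivalence of bimodules (as a homotopy colimit, cf.\ the bar-construction description preceding the lemma), this induces a weak equivalence on THH.

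For moves of type (ii), suppose $\Tan = \Tan_1\Tan_2$ with $\Tan_1$ an $(m,n)$-tangle and $\Tan_2$ an $(n,m)$-tangle. Pick $\bh=(m-h,h)$ and $\bk=(n-k,k)$ with $m-2h=n-2k$, so that $\bh$ and $\bk$ are compatible in the sense of Theorem~\ref{thm:pairing} (and choose $\bh$, $\bk$ large enough so the platform algebras are defined, which costs nothing by Lemma~\ref{lem:restrict-invt-spectra}). The gluing Theorem~\ref{thm:pairing} gives weak equivalences of bimodules
\begin{align*}
  \CKTSpecBimB{\Tan_1\Tan_2}{\bh}{\bh} &\simeq \CKTSpecBimB{\Tan_1}{\bh}{\bk}\DTP_{\CKTSpecCatB{n}{\bk}}\CKTSpecBimB{\Tan_2}{\bk}{\bh},\\
  \CKTSpecBimB{\Tan_2\Tan_1}{\bk}{\bk} &\simeq \CKTSpecBimB{\Tan_2}{\bk}{\bh}\DTP_{\CKTSpecCatB{m}{\bh}}\CKTSpecBimB{\Tan_1}{\bh}{\bk}.
\end{align*}
The trace property of THH (cyclic invariance: $\THH(A;M\DTP_B N) \simeq \THH(B;N\DTP_A M)$ for a bimodule $M$ over $(A,B)$ and $N$ over $(B,A)$) then yields
\[
  \THH(\CKTSpecCatB{m}{\bh};\CKTSpecBimB{\Tan_1\Tan_2}{\bh}{\bh}) \simeq \THH(\CKTSpecCatB{n}{\bk};\CKTSpecBimB{\Tan_2\Tan_1}{\bk}{\bk}),
\]
which is the desired invariance under type (ii). Composing the equivalences from types (i) and (ii) proves the lemma.

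The step I expect to be the main obstacle is verifying that the trace property for THH applies cleanly in our setting of spectral categories with many objects (rather than ring spectra). This should follow from the bar-construction description of THH together with the pointwise cofibrancy of $\CKTSpecCatBig{n}$ noted in \cite[Lemma 4.5]{LLS-kh-tangles}, which was already used to identify topological Hochschild homology with the appropriate homotopy colimit in the discussion preceding the lemma; with those foundations in place, the cyclic equivalence for tensor products of bimodules over spectral categories is a formal consequence of the cyclic symmetry of the bar complex.
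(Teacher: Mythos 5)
Your proposal is correct and follows essentially the same route as the paper: invariance under Reidemeister moves via Theorem~\ref{thm:invariance}, and invariance under cyclic rotation $\Tan_1\Tan_2\leftrightarrow\Tan_2\Tan_1$ via the gluing Theorem~\ref{thm:pairing} combined with the trace (cyclic) property of topological Hochschild homology. The paper's proof is just a terser version of the same argument, and your extra remarks about choosing compatible $\bh,\bk$ and the bar-construction/cofibrancy point are consistent with the setup already established before the lemma.
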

\begin{proof}
  It suffices to verify that $\THH(\CKTSpecBimB{\Tan}{\bbk}{\bbk})$ is invariant
  under Reidemeister moves and cyclic rotation of $\Tan$. Invariance
  under Reidemeister moves is Theorem~\ref{thm:invariance}. The fact that
  \[
    \THH(\CKTSpecBimB{\Tan_1\Tan_2}{\bbk}{\bbk})\simeq \THH(\CKTSpecBimB{\Tan_2\Tan_1}{\bbk}{\bbk})
  \]
  follows from Theorem~\ref{thm:pairing} and the fact that topological
  Hochschild homology is a trace.
\end{proof}

Given a link $L\subset S^1\times\DD^2$, the winding number filtration
on $\KhCx(L\subset \RR^3)$ induces a filtration on the Khovanov
spectrum $\KhSpace{L}$. The associated graded spectrum
$\AKhSpace{L}^\ell$ in winding number grading
$\ell$ is a spectral refinement of annular Khovanov homology
$\AKh(L;\ell)$; verifying that the homotopy type of this associated
graded spectrum is an invariant of the annular link is
straightforward. (See also~\cite{SZ-kh-localization}.)

\begin{theorem}\label{thm:THH-gluing}
  There is a weak equivalence
  \[
    \THH(\CKTSpecBimB{\Tan}{\bbk}{\bbk})\simeq \AKhSpace{\anclose{\Tan}}^{n-2k}\{n-2k\}
  \]
  of bigraded spectra.
\end{theorem}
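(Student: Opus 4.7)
The plan is to refine the chain-level map $\Xi$ of Definition~\ref{def:BPW-gluing-map} to a map of spectra, show that on singular chains it recovers $\Xi$, and then invoke Whitehead's theorem together with Theorem~\ref{thm:Xi-is-BPW}. The target spectrum $\AKhSpace{\anclose{\Tan}}^{n-2k}$ should itself be realized from a multifunctor to the Burnside category: starting from $\mTinv{\anclose{\ou{\Tan}}}$ (or rather its version adapted to the annular closure), one restricts to the sub-correspondences where the upper horizontal circles are labeled $1$, the lower horizontal circles are labeled $X$, and where no intermediate step decreases the winding-number filtration. This defines an insular/absorbing subfunctor of the Khovanov functor for $\anclose{\ou{\Tan}}$, whose quotient realizes the associated graded spectrum in winding number grading $n-2k$, consistent with the annular Khovanov spectrum used, e.g., in \cite{SZ-kh-localization}.

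Next I would build a single multifunctor $F$ on a gluing shape category $\wt{\mathcal{U}}_{\text{ann}}$ whose objects include pairs $(a_1,a_2)$ from $\rCrossinglessB{n}{\bbk}$ and a distinguished object for the annular closure, plus multimorphisms encoding (i) the cyclic bimodule structure of $\CKTSpecBimB{\Tan}{\bbk}{\bbk}$ over $\CKTSpecCatBig{n}$ and (ii) the saddle cobordism $A$ implementing the map $\Wmirror{a}\amalg a\to \mathrm{Id}$ in the annulus. On objects coming from the Khovanov data for $a\ou{\Tan}\Wmirror{a}$ the multifunctor should be $\mCKTinvB{\Tan}{\bbk}{\bbk}$, while on the annular-closure object it should be the annular Khovanov functor in winding number $n-2k$; the multimorphism from a length-$k$ sequence of bimodule/algebra labels to the annular object is the correspondence gotten from the saddle cobordisms, restricted as dictated by Lemma~\ref{lem:CBA-descends}. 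The key verification at this stage is the Burnside-level analog of Lemma~\ref{lem:CBA-descends}: that labelings lying in $\CKTfuncIdealB{\Tan}{\bbk}{\bbk}$ have empty preimage in the correspondence after composing with the saddle map and restricting to the required labels of horizontal circles. This follows by a case analysis exactly parallel to the proof of Lemma~\ref{lem:CBA-descends}, but now using property~\ref{item:Psi-mmorph} of $\mTinv{}$ to rule out correspondences rather than to make a chain-level computation.

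After realizing $F$ through the Elmendorf-Mandell machine and mimicking the construction of the gluing map in the proof of Theorem~\ref{thm:pairing}, one obtains a map of spectra
\[
\widetilde{\Xi}\co \THH(\CKTSpecCatBig{n};\CKTSpecBimB{\Tan}{\bbk}{\bbk})\to \AKhSpace{\anclose{\Tan}}^{n-2k}\{n-2k\}.
\]
Here I use, as in \cite[Lemma 5.6]{LLS-kh-tangles}, that the spectral THH is computed by the cyclic bar construction indicated at the start of Section~\ref{sec:THH}, which translates into a simplicial diagram of Burnside correspondences; the saddle multimorphism assembles these levelwise into the annular target. Taking singular chains of $\widetilde{\Xi}$ and using Theorem~\ref{thm:de-spectrify} together with the analogous chain-level statement for the annular Khovanov spectrum identifies $C_*(\widetilde{\Xi})$ with $\Xi$.

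The final step is to apply Theorem~\ref{thm:Xi-is-BPW}: since $\Xi$ is a quasi-isomorphism, $\widetilde{\Xi}$ induces an isomorphism on homology and is therefore a weak equivalence by Whitehead's theorem, preserving both the homological and quantum gradings by construction. The main obstacle I anticipate is the bookkeeping required to set up the annular-closure shape multicategory and to verify that the saddle correspondences assemble into an honest multifunctor that is compatible with both the cyclic simplicial structure giving $\THH$ and the winding-number filtration used to define $\AKhSpace{\anclose{\Tan}}^{n-2k}$; the absorbing-subfunctor analog of Lemma~\ref{lem:CBA-descends} is essentially the same case analysis as in the combinatorial proof, but it must be stated and checked at the level of Burnside correspondences and across all simplicial degrees, and the fact that the filtration on the Khovanov spectrum is strictly decreased by the disallowed correspondences (as in the type II circle case) needs to be encoded by the construction of $\AKhSpace{\anclose{\Tan}}^{n-2k}$ as a quotient by an insular subfunctor, rather than by a chain-level cancellation.
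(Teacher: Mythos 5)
Your proposal is correct and follows essentially the same strategy as the paper: lift the chain map $\Xi$ to the Burnside level by restricting the saddle correspondences according to (the proof of) Lemma~\ref{lem:CBA-descends}, realize via Elmendorf--Mandell $K$-theory, identify the induced map on chains with $\Xi$, and conclude from Theorem~\ref{thm:Xi-is-BPW} and Whitehead's theorem. The only real difference is in the plumbing you flag as ``bookkeeping'': rather than a single multimorphism in an annular gluing multicategory, the paper re-indexes $\THH$ as a homotopy colimit over a category of sequences of crossingless matchings, factors the whole diagram through a newly defined divided cobordism category of the annulus $\CobD(\annulus)$ (using sufficiently poxed diagrams), adjoins a terminal object hitting the annular closure, and obtains the comparison map as the Puppe map of the resulting cofibration sequence.
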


The last ingredient in the proof of Theorem~\ref{thm:THH-gluing} is a mild
extension of the divided cobordism category $\CobD$ from our previous
paper~\cite[\S3.1]{LLS-kh-tangles} to the annulus, and an extension of
the Khovanov-Burnside functor to this divided cobordism category in the
presence of platforms. To have strict identities and make composition
strictly associative while not destroying interesting topology, we
will quotient by a particular class of diffeomorphisms:
\begin{definition}
  Let $\Diff'(S^1)$ denote the group of orientation-preserving
  diffeomorphisms $\phi\co S^1\to S^1$ so that there is some
  $\epsilon=\epsilon(\phi)>0$ with $\phi|_{B_\epsilon(1)}=\Id$. (Here,
  $1\in S^1\subset\CC$ and $B_\epsilon(1)$ is an interval around $1$.)

  Let $\Diff'([0,1]\times S^1)$ denote the group of
  orientation-preserving diffeomorphisms
  $\phi\from [0,1]\times S^1\to [0,1]\times S^1$ so that there is some
  $\epsilon=\epsilon(\phi)>0$ and some $\psi_0,\psi_1\in\Diff'(S^1)$
  so that $\phi|_{[0,1]\times B_\epsilon(1)}=\Id$, and
  $\phi(p,q)=(p,\psi_0(q))$ for all $p\in[0,\epsilon)$, and
  $\phi(p,q)=(p,\psi_1(q))$ for all $p\in(1-\epsilon,1]$. (That is,
  $\phi$ is the identity near $1$ and is invariant in the
  $[0,1]$-direction near the boundary.)
\end{definition}
(Compare~\cite[Definitions 2.46 and 2.47]{LLS-kh-tangles}.)

\begin{definition}\label{def:CobD-annulus}
  Let $\annulus= S^1\times(-1,1)$ denote the annulus. The
  \emph{divided cobordism category of the annulus}, $\CobD(\annulus)$,
  is defined as follows:
  \begin{itemize}[leftmargin=*]
  \item An object of $\CobD(\annulus)$ consists of:
    \begin{itemize}[leftmargin=*]
    \item A smooth, closed $1$-manifold $Z$ embedded in $\annulus$.
    \item A collection of disjoint, closed arcs 
      $A\subset Z$ such that $I=Z\setminus A$ is also a union of disjoint arcs.
      We call components of $A$ \emph{active arcs} and components of $I$
      \emph{inactive arcs}.
    \end{itemize}
    We declare two objects $(Z,A)$ and $(Z',A)$ to be equivalent if
    there is a $\phi\in\Diff'(S^1)$ so that
    $(\phi\times \Id_{(-1,1)})(Z,A)=(Z',A')$.
  \item  A morphism from $(Z,A)$ to $(Z',A')$ is an equivalence class of
    pairs $(\Sigma,\Gamma)$ where
  \begin{itemize}[leftmargin=*]
  \item $\Sigma$ is a smoothly embedded cobordism in
    $[0,1]\times \annulus$ from $Z$ to $Z'$ which is vertical
    (invariant in the $[0,1]$-direction) near $\{0,1\}\times\annulus$
    and $[0,1]\times\{1\}\times(-1,1)$.
  \item $\Gamma\subset \Sigma$ is a collection of properly embedded
    arcs in $\Sigma$, also vertical near $\{0,1\}\times\annulus$, with
    $(\bdy A\cup \bdy A')= \bdy\Gamma$, and so that every component of
    $\Sigma\setminus \Gamma$ has one of the following forms:
    \begin{enumerate}[label=(\Roman*),leftmargin=*]
    \item\label{item:type1} A rectangle, with two sides components of
      $\Gamma$ and two sides components of $A\cup A'$.
    \item\label{item:type2} A $(2n+2)$-gon, with $(n+1)$ sides
      components of $\Gamma$, one side an arc component of $I'$, and the
      other $n$ sides arc components of $I$. (The integer $n$ is allowed
      to be zero.)
    \end{enumerate}
    We call the components of $\Gamma$ \emph{divides}.
  \end{itemize}
  The pairs $(\Sigma,\Gamma)$ and $(\Sigma',\Gamma')$ are equivalent
  if there is a $\phi\in\Diff'([0,1]\times S^1)$
  with 
  $(\phi\times\Id_{(-1,1)})(\Sigma)=\Sigma'$, and
  $(\phi\times\Id_{(-1,1)})(\Gamma)=\Gamma'$.
\item There is a unique $2$-morphism from $(\Sigma,\Gamma)$ to
  $(\Sigma',\Gamma')$ whenever (some representative of the equivalence
  class of) $(\Sigma,\Gamma)$ is isotopic to (some representative of
  the equivalence class of) $(\Sigma',\Gamma')$ rel boundary and
  $[0,1]\times\{1\}\times(-1,1)$.
\item Composition of divided cobordisms is defined as follows. Given
  $(\Sigma,\Gamma)\co (Z,A)\to(Z',A')$ and $(\Sigma',\Gamma')\co
  (Z',A')\to(Z'',A'')$, choose a representative of the equivalence
  class of $(Z',A')$ and representatives of the equivalence classes
  $(\Sigma,\Gamma)$ and $(\Sigma',\Gamma')$ which end / start at this
  representative of $(Z',A')$.  Define
  $(\Sigma',\Gamma')\circ(\Sigma,\Gamma)$ to be
  $(\Sigma'\circ\Sigma,\wt{\Gamma}'\circ\Gamma)$. The same proof from
  \cite{LLS-kh-tangles} shows that composition is well-defined.
\end{itemize}
\end{definition}

Recall that a multicategory $\Cat$ has a canonical groupoid
enrichment~\cite[\S 2.4.1]{LLS-kh-tangles}. In the case that $\Cat$ is
an ordinary category---the case of interest in this section---the
canonical groupoid enrichment $\Cat'$ has the same objects as
$\Cat$, $1$-morphisms $\Cat'(x,y)$ the set of finite sequences of
morphisms
$x\stackrel{f_1}{\longrightarrow}
z_1\stackrel{f_2}{\longrightarrow}\cdots\stackrel{f_{k-1}}{\longrightarrow}
z_k\stackrel{f_k}{\longrightarrow} y$, and a unique $2$-morphism
$(f_1,\dots,f_k)\to (g_1,\dots,g_\ell)$ whenever
$f_k\circ\cdots\circ f_1=g_\ell\circ\cdots\circ g_1$. The relevance
of this enrichment is that there is a (strict) $2$-functor
$V_{\HKKa}\co \CobD'\to\BurnsideCat$, the graded Burnside
category~\cite[\S3.4]{LLS-kh-tangles}. The definition of
$V_{\HKKa}$ extends immediately to a functor
$V_{\HKKa}\co\CobD(\annulus)'\to\BurnsideCat$.

Recall that to define a functor $\mTshape{m}{n}\to \CobD$ induced by
a tangle $\Tan$, we introduce some extra decorations on $\Tan$:
\begin{definition}\cite[\S3.3]{LLS-kh-tangles}
  A \emph{poxed tangle} is a tangle diagram $\Tan$ together with a
  collection of marked points (\emph{pox}) on the arcs in $\Tan$ so
  that for every resolution $\Tan_v$ of $\Tan$, there is at least one
  pox on each closed circle of $\Tan_v$.

  Call a poxed tangle $\Tan$ \emph{sufficiently poxed} if for every
  resolution $\Tan_v$ of $\Tan$ there is at least one pox on each
  closed circle of the annular closure $\anclose{\Tan_v}$.
\end{definition}

Given a poxed $(n,n)$-tangle $\Tan$, there is an induced functor
$\CCat{N}\ttimes \mTshape{2n}{2n}\to \CobD$ which restricts to a functor
$\CCat{N}\ttimes \mCKTshapeB{n}{n}{\bbk}{\bbk}\to
\CobD$~\cite[\S3.4]{LLS-kh-tangles}. Further, the composition
$\CCat{N}\ttimes \mTshape{2n}{2n}\to \CobD\to \mBurnside$ is independent
of the choice of pox. If $\Tan$ is sufficiently poxed, there is also
an induced functor $\CCat{N}\to \CobD(\annulus)$ coming from the
annular closure $\anclose{\Tan}$ of $\Tan$. Again, the composition
$\CCat{N}\to\CobD(\annulus)\to\mBurnside$ is independent of the choice
of pox.

\begin{proof}[Proof of Theorem~\ref{thm:THH-gluing}]
  It suffices to show that there is a map of spectra
  \[
    \THH(\CKTSpecBimB{\Tan}{\bbk}{\bbk})\to \AKhSpace{\anclose{\Tan}}^{n-2k}\{n-2k\}
  \]
  so that the induced map on Hochschild complexes agrees with the map
  from Definition~\ref{def:BPW-gluing-map}.

  Fix a collection of pox on $\Tan$ making $\Tan$ sufficiently
  poxed. When talking about $\ou{\Tan}$, below, add a pox in the
  middle of each horizontal strand added to $\Tan$.
  
  Let $\DeltaInj$ be the subcategory of the simplex category generated
  by the face maps. That is, $\DeltaInj$ has objects the positive
  integers and $\Hom(p,q)$ the order-preserving injections
  $\{0,\dots,p-1\}\into \{0,\dots,q-1\}$.
  Then the topological Hochschild homology
  $\THH(\CKTSpecCatB{n}{\bbk};\CKTSpecBimB{\Tan}{\bbk}{\bbk})$ is the
  homotopy colimit of a diagram $\DeltaInj^\op\to\GrSpectra$.

  We will reformulate this homotopy colimit over a larger
  diagram, but first we need some more notation from the guts of the
  construction. Recall that $\CCat{N}_+$ is the result of adding one
  object $*$ to $\CCat{N}$ and a morphism $v\to *$ for each vertex $v$
  except $\vec{1}$. Given a functor $G\co \CCat{N}\to\GrSpectra$, we can
  extend $G$ to a functor $G_+\co \CCat{N}_+\to\GrSpectra$ by declaring
  that $G(*)=\{\pt\}$. Then $\hocolim G_+$ is the iterated mapping
  cone of $G$. Consider the functor
  \[
    (\overline{K\circ \mCKTinvB{\Tan}{\bbk}{\bbk}})_+\co
    \CCat{N}_+\ttimes\SmCKTshapeB{n}{n}{\bbk}{\bbk}\to\GrSpectra
  \]
  where $K$ is Elmendorf-Mandell's $K$-theory,
  $\mCKTinvB{\Tan}{\bbk}{\bbk}$ is as in Section~\ref{sec:spectral-CK},
  and the bar denotes applying Elmendorf-Mandell's rectification
  construction, to obtain an honest functor. To shorten notation, let
  \[
    \oCKTSpecBimB{\Tan_v}{\bbk}{\bbk}(a_1,a_2)=(\overline{K\circ \mCKTinvB{\Tan}{\bbk}{\bbk}})_+(v,a_1,T,a_2).
  \]
  (The spectrum $\oCKTSpecBimB{\Tan_v}{\bbk}{\bbk}(a_1,a_2)$ is
  weakly equivalent to $\CKTSpecBimB{\Tan_v}{\bbk}{\bbk}(a_1,a_2)\{-|v|-N_++2N_-\}$;
  the difference arises based on when the rectification
  construction was applied.)
  
  Now, let $\Cat$ be the category with one object
  for every finite sequence
  $a_1,\dots,a_\alpha\in\rCrossinglessB{n}{\bbk}$, $\alpha\geq1$, and with a
  unique morphism
  $(a_1,\dots,a_\alpha)\to (a_1,\dots,\widehat{a_i},\dots,a_\alpha)$ for each
  $i$, composing in the obvious way. Define a functor
  $F\co\CCat{N}_+\times \Cat\to \GrSpectra$ (where $\CCat{N}$ is viewed as
  a $1$-category) by declaring that:
  \begin{itemize}[leftmargin=*]
  \item For $v\in\Ob(\CCat{N})$ and
    $a_1,\dots,a_\alpha\in\rCrossinglessB{n}{\bbk}$,
    \[
      F(v,a_1,\dots,a_\alpha)=\oCKTSpecBimB{\Tan_v}{\bbk}{\bbk}(a_\alpha,a_1)\smas\CKTSpecCatB{n}{\bbk}(a_1,a_2)\smas\cdots\smas\CKTSpecCatB{n}{\bbk}(a_{\alpha-1},a_\alpha).
    \]
  \item $F(*,a_1,\dots,a_\alpha)=\{\pt\}$.
  \item $F$ sends a morphism $v\to w$ in $\CCat{N}$ to the
    map 
    from $\oCKTSpecBimB{\Tan_v}{\bbk}{\bbk}(a_\alpha,a_1)$ to
    $\oCKTSpecBimB{\Tan_w}{\bbk}{\bbk}(a_\alpha,a_1)$ from the functor
    $(\overline{K\circ \mCKTinvB{\Tan}{\bbk}{\bbk}})_+$,
    smashed with the identity map in the other factors.
  \item $F$ sends the morphism
    $(v,a_1,\dots,a_\alpha)\to
    (v,a_1,\dots,\widehat{a_i},\dots,a_\alpha)$ to the multiplication
    map
    \begin{align*}
      \CKTSpecCatB{n}{\bbk}(a_{i-1},a_i)\smas\CKTSpecCatB{n}{\bbk}(a_i,a_{i+1}) &\to \CKTSpecCatB{n}{\bbk}(a_{i-1},a_{i+1}) & &\text{if }1<i<\alpha\\
      \shortintertext{or maps from the functor $(\overline{K\circ \mCKTinvB{\Tan}{\bbk}{\bbk}})_+$}
      \oCKTSpecBimB{\Tan_v}{\bbk}{\bbk}(a_\alpha,a_1)\smas\CKTSpecCatB{n}{\bbk}(a_1,a_2) &\to \oCKTSpecBimB{\Tan_v}{\bbk}{\bbk}(a_\alpha,a_2) & &\text{if } i=1\\
      \CKTSpecCatB{\alpha}{\bbk}(a_{\alpha-1},a_\alpha)\smas\oCKTSpecBimB{\Tan_v}{\bbk}{\bbk}(a_\alpha,a_1) &\to \oCKTSpecBimB{\Tan_v}{\bbk}{\bbk}(a_{\alpha-1},a_1) & &\text{if } i=\alpha,  
    \end{align*}
    smashed with the identity map in the remaining factors.
  \end{itemize}
  The homotopy colimit of $F$, shifted down by $N_+$, is clearly equivalent to the topological
  Hochschild homology
  $\THH(\CKTSpecCatB{n}{\bbk};\CKTSpecBimB{\Tan}{\bbk}{\bbk})$.

  The advantage of the reformulation in terms of $F$ is that $F$
  factors through the divided cobordism category of the
  annulus. Specifically, there is a functor
  $G\co (\CCat{N}\times\Cat)'\to\CobD(\annulus)'$
  which sends an object
  $(v,a_1,\dots,a_\alpha)$ to the $1$-manifold
  \[
    a_{\alpha}\ou{\Tan_v}\Wmirror{a_1} \amalg a_1\Wmirror{a_2}\amalg\cdots\amalg a_{\alpha-1}\Wmirror{a_\alpha},
  \]
  embedded in $\annulus$ so that the middle of $\ou{\Tan_v}$ is on the
  line $\{1\}\times(-1,1)$ and the disjoint unions are in the cyclic
  order shown. The set $A$ is the union of:
  \begin{itemize}[leftmargin=*]
  \item a small closed neighborhood of the crossings labeled $0$ in
    $v$,
  \item a small closed neighborhood of each pox in $\Tan$,
    and
  \item the complement in each $a_i$ (respectively $\Wmirror{a_i}$) of
    a neighborhood of the boundary. 
  \end{itemize}
  Each morphism in $(\CCat{N}\times\Cat)$ is sent to a
  composition of saddle cobordisms (cf.~\cite[\S3.3--3.4]{LLS-kh-tangles}).

  Consider the composition $V_{\HKKa}\circ G\co (\CCat{N}\times\Cat)'\to
  \BurnsideCat$. Define a functor $L\co
  (\CCat{N}\times\Cat)'\to \BurnsideCat$ by declaring
  that for an object $(v,a_1,\dots,a_\alpha)$,
  $L(v,a_1,\dots,a_\alpha)=\emptyset$ if there is a type III circle in
  any of $a_{\alpha}\ou{\Tan_v}\Wmirror{a_1}$, $a_1\Wmirror{a_2}$,
  \dots, $a_{\alpha-1}\Wmirror{a_\alpha}$. Otherwise,
  after shifting quantum grading by $n-|v|-N_++2N_-$,
  $L(v,a_1,\dots,a_\alpha)$ is the set of elements $y\in
  V_{\HKKa}(G(v,a_1,\dots,a_\alpha))$ which label each type II circle
  by $1$. On morphisms, $L$ is obtained by restricting $V_{\HKKa}\circ
  G$.  It follows from Lemma~\ref{lem:mI-absorbing} that this defines
  a $2$-functor.

  Clearly, composing the Elmendorf-Mandell $K$-theory functor with
  $L$, rectifying, and adding a basepoint $*$ to $\CCat{N}$, gives a diagram
  equivalent to $F$. In particular, $\hocolim((\ol{K\circ L})_+)\simeq \hocolim
  F$, desuspended $N_+$ times, is the topological Hochschild
  homology.

  Let $\Cat^T$ be the result of adding a terminal object $T$ to
  $\Cat$. Extend $G$ to a functor $G^T\co (\CCat{N}\times\Cat^T)'\to
  \CobD(\annulus)'$ by declaring that $G^T(v,T)=\anclose{\Tan}$, and
  $G^T$ sends each morphism to the corresponding saddle
  cobordism. Define $L^T\co\Cat^T\to\BurnsideCat$ as follows. On
  $(\CCat{N}\times\Cat)'$, $L^T|_{\CCat{N}\times\Cat}=L$. After
  shifting quantum grading by $-|v|-N_++2N_-$, define $L^T(v,T)\subset
  V_{\HKKa}(G^T(v,T))$ to be those elements which
  \begin{itemize}[leftmargin=*]
  \item have annular filtration $0$,
  \item label every lower horizontal circle $X$, and
  \item label every upper horizontal circle $1$.
  \end{itemize}
  On morphisms, define $L^T$ to be the restriction of $V_{\HKKa}\circ
  G^T$.  It follows from the proof of Lemma~\ref{lem:CBA-descends}
  that this defines a $2$-functor. 

  Consider the functor
  $K\circ L^T\co(\CCat{N}\times\Cat^T)'\to\GrSpectra$, and let $\ol{K\circ L^T}$
  be its rectification. By definition, $\hocolim((\ol{K\circ
  L^T}|_{\CCat{N}\times\{T\}})_+)$, desuspended $N_+$ times, is
  $\AKhSpace{\anclose{\Tan}}^{n-2k}\{n-2k\}$. This leads to a cofibration sequence
  \[
  \Sigma^{N_+}\AKhSpace{\anclose{\Tan}}^{n-2k}\{n-2k\}\to\hocolim (\ol{K\circ L^T})_+\to \Sigma\hocolim (\ol{K\circ L})_+\simeq \Sigma\hocolim F.
  \]  
  Further, by construction the map
  $H_*(\Sigma\hocolim F)\to H_*(\Sigma^{N_++1}\AKhSpace{\anclose{\Tan}}^{n-2k}\{n-2k\})$
  induced by the Puppe construction is the map $\Xi_*$ from
  Theorem~\ref{thm:Xi-is-BPW}. Thus, the Puppe map is a weak
  equivalence
  $\hocolim F\simeq \Sigma^{N_+}\AKhSpace{\anclose{\Tan}}^{n-2k}\{n-2k\}$. Since
  $\hocolim F\simeq
  \Sigma^{N_+}\THH(\CKTSpecCatB{n}{\bbk};\CKTSpecBimB{\Tan}{\bbk}{\bbk})$, this proves
  the result.
\end{proof}

\begin{corollary}
  The action of the Hochschild cohomology of $\CKTalgB{n}{\bbk}$ on
  the annular Khovanov homology
  $\AKh(\anclose{\Tan};n-2k)\{n-2k\}\cong
  \HH_*(\CKTalgB{n}{\bbk};\CKTfuncB{\Tan}{\bbk}{\bbk})$ satisfies a Cartan
  formula with respect to the action by Steenrod operations. For
  example, with mod-2 coefficients,
  for $a\in \HH^*(\CKTalgB{n}{\bbk})$ and
  $\beta\in \HH_*(\CKTalgB{n}{\bbk};\CKTfuncB{\Tan}{\bbk}{\bbk})$,
  \[
    \Sq^n(a\cdot\beta)=\sum_{i+j=n}\Sq^i(a)\cdot\Sq^j(\beta).
  \]
\end{corollary}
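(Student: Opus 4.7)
The plan is to realize the action of $\HH^*(\CKTalgB{n}{\bbk})$ on $\HH_*(\CKTalgB{n}{\bbk};\CKTfuncB{\Tan}{\bbk}{\bbk})$ as a map of spectra, after which the Cartan formula follows formally from the standard external Cartan formula for Steenrod operations on smash products.

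First, the $\CKTSpecCatB{n}{\bbk}$-bimodule structure on $\CKTSpecBimB{\Tan}{\bbk}{\bbk}$ endows $\THH(\CKTSpecCatB{n}{\bbk};\CKTSpecBimB{\Tan}{\bbk}{\bbk})$ with the structure of a module over the endomorphism ring spectrum
\[
\mathscr{Z}\defeq F_{\CKTSpecCatB{n}{\bbk}\smas\CKTSpecCatB{n}{\bbk}^{\op}}(\CKTSpecCatB{n}{\bbk},\CKTSpecCatB{n}{\bbk}),
\]
yielding a spectrum-level action map
\[
\alpha\co \mathscr{Z}\smas\THH(\CKTSpecCatB{n}{\bbk};\CKTSpecBimB{\Tan}{\bbk}{\bbk}) \too \THH(\CKTSpecCatB{n}{\bbk};\CKTSpecBimB{\Tan}{\bbk}{\bbk}).
\]
By Theorem~\ref{thm:de-spectrify} and the standard identification of derived bimodule endomorphisms with Hochschild cohomology, $\pi_{-*}\mathscr{Z}\cong\HH^*(\CKTalgB{n}{\bbk})$, while Theorem~\ref{thm:THH-gluing} identifies $\pi_*\THH(\CKTSpecCatB{n}{\bbk};\CKTSpecBimB{\Tan}{\bbk}{\bbk})$ with $\AKh(\anclose{\Tan};n-2k)\{n-2k\}$.

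The main obstacle is verifying that the induced pairing on homotopy groups reproduces the classical action of Hochschild cohomology on Hochschild homology under these identifications. This is a compatibility check analogous to those in Theorem~\ref{thm:de-spectrify} and Theorem~\ref{thm:THH-gluing}: one compares the simplicial bar-resolution model of $\THH$ at the spectrum level (built from the homotopy colimit over $\DeltaInj^{\op}$ used in the proof of Theorem~\ref{thm:THH-gluing}) with the Hochschild bicomplex computing $\HC_*(\CKTalgB{n}{\bbk};\CKTfuncB{\Tan}{\bbk}{\bbk})$, and traces the module action of $\mathscr{Z}$ through the multifunctorial constructions of Section~\ref{sec:review-spectral-arc-algebra}.

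Granting that $\alpha$ refines the classical action, the Cartan formula is immediate. Smashing $\alpha$ with the Eilenberg--MacLane spectrum $H\F_2$ yields a pairing of $H\F_2$-modules, and the external Cartan formula for Steenrod operations acting on the mod-$2$ homotopy of a smash product specializes to the internal formula
\[
\Sq^n(a\cdot\beta)=\sum_{i+j=n}\Sq^i(a)\cdot\Sq^j(\beta),
\]
after identifying the spectrum-level pairing on $\pi_*(-\smas H\F_2)$ with the $\HH^*(\CKTalgB{n}{\bbk})$-action on $\AKh(\anclose{\Tan};n-2k)\{n-2k\}$.
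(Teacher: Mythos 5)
Your overall strategy---realize the $\HH^*$-action by a spectrum-level pairing on $\THH(\CKTSpecCatB{n}{\bbk};\CKTSpecBimB{\Tan}{\bbk}{\bbk})$ and then quote the Cartan formula for Steenrod operations on a pairing of spectra---is exactly the intended route (the paper leaves this corollary as an immediate consequence of Theorem~\ref{thm:THH-gluing}). But there is a genuine error in your first identification: the homotopy groups of the sphere-level endomorphism spectrum $\mathscr{Z}=F_{\CKTSpecCatB{n}{\bbk}\text{-bimod}}(\CKTSpecCatB{n}{\bbk},\CKTSpecCatB{n}{\bbk})$ are \emph{not} the algebraic Hochschild cohomology of $\CKTalgB{n}{\bbk}$. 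This is topological Hochschild cohomology, i.e.\ Ext over the sphere, not over $\ZZ$; already for $n=0$ one has $\CKTSpecCatB{0}{\bbk}\simeq\SphereS$, so $\mathscr{Z}\simeq\SphereS$ and $\pi_{-*}\mathscr{Z}$ is the stable stems, whereas $\HH^*(\ZZ;\ZZ)=\ZZ$. Theorem~\ref{thm:de-spectrify} identifies $C_*$ of the spectral category with the dga $\CKTalgB{n}{\bbk}$, which controls (co)homology, not homotopy; the correct analogue of Proposition 4.1 is a statement about $H_*$ (or mod-$2$ $H^*$) of the relevant spectra, and even that requires an argument (either the chain-level comparison as in the proof of the $\THH$ proposition, or a base change of $\mathscr{Z}$ along $\SphereS\to H\F_2$, which needs a finiteness/smoothness justification since $\CKTSpecCatB{n}{\bbk}$ is not obviously perfect as a bimodule over itself).

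This error propagates into your last step: Steenrod squares act on the mod-$2$ cohomology (equivalently, by finite-type duality, homology) of spectra, not on homotopy, so for the Cartan formula to even make sense the classes $a$ and $\beta$ must be located in $H^*$ of $\mathscr{Z}$ (or of an $H\F_2$-linearized replacement) and of the $\THH$ spectrum, under identifications with $\HH^*(\CKTalgB{n}{\bbk})$ and $\HH_*(\CKTalgB{n}{\bbk};\CKTfuncB{\Tan}{\bbk}{\bbk})\cong\AKh(\anclose{\Tan};n-2k)\{n-2k\}$ that you have not supplied. Finally, the statement you flag but defer---that the spectrum-level evaluation $\mathscr{Z}\smas\THH\to\THH$ induces, after passing to (co)homology, the classical action of Hochschild cohomology on Hochschild homology---is precisely the substantive compatibility check; without it, nothing ties the Cartan formula you obtain to the action named in the corollary. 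So the skeleton is right, but the proof as written rests on an identification that is false at the level of homotopy groups and leaves the crucial comparison unproved.
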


\vspace{-0.3cm}
\bibliographystyle{myalpha}
\bibliography{newbibfile}

\providecommand{\bysame}{\leavevmode\hbox to3em{\hrulefill}\thinspace}
\providecommand{\MR}{\relax\ifhmode\unskip\space\fi MR }
\providecommand{\MRhref}[2]{%
  \href{http://www.ams.org/mathscinet-getitem?mr=#1}{#2}
}
\providecommand{\href}[2]{#2}
\begin{thebibliography}{AGW15}

\bibitem[AGW15]{AGW-kh-HH}
Denis Auroux, J.~Elisenda Grigsby, and Stephan~M. Wehrli, \emph{Sutured
  {K}hovanov homology, {H}ochschild homology, and the {O}zsv\'ath-{S}zab\'o
  spectral sequence}, Trans. Amer. Math. Soc. \textbf{367} (2015), no.~10,
  7103--7131. \MR{3378825}

\bibitem[APS04]{APS-kh-surfaces}
Marta~M. Asaeda, J\'ozef~H. Przytycki, and Adam~S. Sikora,
  \emph{Categorification of the {K}auffman bracket skein module of
  {$I$}-bundles over surfaces}, Algebr. Geom. Topol. \textbf{4} (2004),
  1177--1210. \MR{2113902}

\bibitem[Bal11]{Baldwin-hf-s-seq}
John~A. Baldwin, \emph{On the spectral sequence from {K}hovanov homology to
  {H}eegaard {F}loer homology}, Int. Math. Res. Not. IMRN (2011), no.~15,
  3426--3470. \MR{2822178 (2012g:57021)}

\bibitem[Bar05]{Bar-kh-tangle-cob}
Dror Bar-Natan, \emph{Khovanov's homology for tangles and cobordisms}, Geom.
  Topol. \textbf{9} (2005), 1443--1499. \MR{2174270 (2006g:57017)}

\bibitem[BM12]{BM-top-spectral}
Andrew~J. Blumberg and Michael~A. Mandell, \emph{Localization theorems in
  topological {H}ochschild homology and topological cyclic homology}, Geom.
  Topol. \textbf{16} (2012), no.~2, 1053--1120. \MR{2928988}

\bibitem[BPW19]{BPW-Kh-HH}
Anna Beliakova, Krzysztof~K. Putyra, and Stephan~M. Wehrli, \emph{Quantum link
  homology via trace functor {I}}, Invent. Math. \textbf{215} (2019), no.~2,
  383--492. \MR{3910068}

\bibitem[BS11]{BS-kh-tangle}
Jonathan Brundan and Catharina Stroppel, \emph{Highest weight categories
  arising from {K}hovanov's diagram algebra {I}: cellularity}, Mosc. Math. J.
  \textbf{11} (2011), no.~4, 685--722, 821--822. \MR{2918294}

\bibitem[CK14]{CK-kh-tangle}
Yanfeng Chen and Mikhail Khovanov, \emph{An invariant of tangle cobordisms via
  subquotients of arc rings}, Fund. Math. \textbf{225} (2014), no.~1, 23--44.
  \MR{3205563}

\bibitem[EM06]{EM-top-machine}
A.~D. Elmendorf and M.~A. Mandell, \emph{Rings, modules, and algebras in
  infinite loop space theory}, Adv. Math. \textbf{205} (2006), no.~1, 163--228.
  \MR{2254311 (2007g:19001)}

\bibitem[GW10]{GW-kh-sutured}
J.~Elisenda Grigsby and Stephan~M. Wehrli, \emph{Khovanov homology, sutured
  {F}loer homology and annular links}, Algebr. Geom. Topol. \textbf{10} (2010),
  no.~4, 2009--2039. \MR{2728482}

\bibitem[HKK16]{HKK-Kh-htpy}
Po~Hu, Daniel Kriz, and Igor Kriz, \emph{Field theories, stable homotopy theory
  and {K}hovanov homology}, Topology Proc. \textbf{48} (2016), 327--360.

\bibitem[HSS00]{HSS-top-symmetric}
Mark Hovey, Brooke Shipley, and Jeff Smith, \emph{Symmetric spectra}, J. Amer.
  Math. Soc. \textbf{13} (2000), no.~1, 149--208. \MR{1695653 (2000h:55016)}

\bibitem[Kho00]{Kho-kh-categorification}
Mikhail Khovanov, \emph{A categorification of the {J}ones polynomial}, Duke
  Math. J. \textbf{101} (2000), no.~3, 359--426. \MR{1740682 (2002j:57025)}

\bibitem[Kho02]{Kho-kh-tangles}
\bysame, \emph{A functor-valued invariant of tangles}, Algebr. Geom. Topol.
  \textbf{2} (2002), 665--741 (electronic). \MR{1928174 (2004d:57016)}

\bibitem[LLSa]{LLS-khovanov-product}
Tyler Lawson, Robert Lipshitz, and Sucharit Sarkar, \emph{{K}hovanov homotopy
  type, {B}urnside category, and products}, arXiv:1505.00213.

\bibitem[LLSb]{LLS-kh-tangles}
\bysame, \emph{Khovanov spectra for tangles}, arXiv:1706.02346.

\bibitem[LS14]{RS-khovanov}
Robert Lipshitz and Sucharit Sarkar, \emph{A {K}hovanov stable homotopy type},
  J. Amer. Math. Soc. \textbf{27} (2014), no.~4, 983--1042. \MR{3230817}

\bibitem[Rob13]{Roberts-kh-dcov}
Lawrence~P. Roberts, \emph{On knot {F}loer homology in double branched covers},
  Geom. Topol. \textbf{17} (2013), no.~1, 413--467. \MR{3035332}

\bibitem[Rob16a]{Roberts-kh-A-tangle}
\bysame, \emph{A type {$A$} structure in {K}hovanov homology}, Algebr. Geom.
  Topol. \textbf{16} (2016), no.~6, 3653--3719. \MR{3584271}

\bibitem[Rob16b]{Roberts-kh-tangle}
\bysame, \emph{A type {$D$} structure in {K}hovanov homology}, Adv. Math.
  \textbf{293} (2016), 81--145. \MR{3474320}

\bibitem[Str09]{St-kh-tangle}
Catharina Stroppel, \emph{Parabolic category {$\mathscr{O}$}, perverse sheaves
  on {G}rassmannians, {S}pringer fibres and {K}hovanov homology}, Compos. Math.
  \textbf{145} (2009), no.~4, 954--992. \MR{2521250}

\bibitem[SZ]{SZ-kh-localization}
Matthew Stoffregen and Melissa Zhang, \emph{Localization in {K}hovanov
  homology}, arXiv:1810.04769.

\end{thebibliography}
\vspace{1cm}
\end{document}